\documentclass{amsart}

\usepackage{graphicx}
\usepackage{amssymb}
\usepackage{amsthm}
\usepackage{subcaption}

\newtheorem{theorem}{Teorema}
\newtheorem{lemma}[theorem]{Lemma}

\theoremstyle{definition}
\newtheorem{definition}[theorem]{Definition}
\newtheorem{ejem}[theorem]{Example}

\newcommand{\bd}{\partial}

\begin{document}

\title{Classification of genus two knots which admit a $(1,1)$-decomposition}
\author{M. Eudave-Mu\~noz}
\address{ \hskip-\parindent
Mario  Eudave-Mu\~noz\\
Instituto de Matem\'aticas\\
 Universidad Nacional Aut\'onoma de M\'exico\\
  M\'exico, D.F.\\ MX \\ and CIMAT, Guanajuato, 
 MX}
 \email{mario@matem.unam.mx}

\author{F. Manjarrez-Guti\'errez}
\address{ \hskip-\parindent
Fabiola Manjarrez-Guti\'errez \\
 Instituto de Matem\'aticas\\
 Universidad Nacional Aut\'onoma de M\'exico\\
 Cuernavaca, Morelos,
 MX}
\email{fabiola.manjarrez@im.unam.mx}

\author{E. Ram\'{i}rez-Losada}
\address{ \hskip-\parindent
Enrique Ram\'{i}rez-Losada\\
 Centro de Investigaci\'on en Matem\'aticas\\
 Guanajuato, GTO.
 MX}
\email{kikis@cimat.mx}

\date{\today}
\subjclass{57M25}
\keywords{$(1,1)$-knots, genus two knots, tunnel number one knots.}

\begin{abstract}
 We describe the genus two knots which admit a genus one, one bridge position. These are divided into several families, one consists of vertical bandings of two genus one $(1,1)$-knots, other consists of vertical bandings of two cross cap number two 2-bridge knots, and the last one consists of genus two tunnel number one satellite knots.
\end{abstract}

\maketitle
\section{Introduction}
\label{sec1}

A knot $K$ in $S^3$ admits a $(1,1)$-decomposition if there is a genus one Heegaard splitting $H_1\cup H_2$ of $S^3$ such that each of  $K \cap H_1$ and $K\cap H_2$ is a single arc that is parallel into the torus $T=\bd H_1=\bd H_2$; we also say that $K$ is a $(1,1)$-knot. This family contains all 2-bridge knots, all satellite tunnel number one knots, and it is contained  in the family of tunnel number one knots.  Morimoto and Sakuma \cite{MS}  and independently Eudave-Mu\~noz \cite{Eu} classified tunnel number one satellite knots. They have  a concrete description using a 4-tuple of integers. Goda and Teragaito \cite{GT} determined which of these knots have genus one, they conjectured that any non satellite genus one tunnel number one knot is a 2-bridge knot. 
Matsuda \cite{Ma} proved this conjecture for $(1,1)$-knots, then the conjecture is equivalent to the statement that any genus one tunnel number one knot admits a $(1,1)$-decomposition. Scharlemann settled it in \cite{Sc}, thus a genus one tunnel number one is either a 2-bridge knot or a satellite knot. Ram\'{i}rez-Losada and Valdez-S\'anchez \cite{RV} classified the family of crosscap number two tunnel number one knots which admit a $(1,1)$-decomposition. Moreover, they found tunnel number one knots bounding an essential once-punctured Klein bottle which are not $(1,1)$-knots. 

In this paper we consider the family of genus two $(1,1)$-knots. We divide the study of such knots into the satellite and non-satellite cases. For the non-satellite case we find that these knots can be described as a special banding of two genus one tunnel number one knots, or a special banding of two cross cap number two 2-bridge knots. In the case that the knot is satellite we determine the 4-tuple of the Morimoto-Sakuma construction that produces satellite genus two tunnel number one knots. We think that there are genus two tunnel number one knots which are not $(1,1)$.

The main technique used in the demonstrations is Morse theory, namely we consider the natural projection $h: T\times I \rightarrow I$, where $T$ is the Heegaard torus for $S^3$ and a genus two Seifert surface $F$ for a $(1,1)$-knot $K$. The surface $F$ can be isotoped in such a way that $F \cap S\times I$ possess useful properties to achieve the desire characterisation for the knot $K$. 
 
In Section \ref{sec2} we review $(g,b)$-decompositions for knots in $S^3$. In Section \ref{sec3}, we study the Morse position of a genus two Seifert surface for a $(1,1)$-knot. The classification for non-satellite knots is given in Section \ref{sec4}, see Theorems \ref{piecestheorem} and  \ref{nonsatcase}. Finally in Section \ref{sec5} we study the satellite case.

\section{$(g,b)$-decompositions}
\label{sec2}

A properly embedded arc $t$ in a solid torus $W$ is said to be \textit{trivial} if there is a disk $D$ in $W$ with $t\subset \bd D$ and $\bd D-t \subset \bd W$. Such a disk $D$ is called a \textit{cancelling disk}.

A knot or link $L$ in $S^3$ is said to be a \textit{$(g,b)$-knot} if there is a genus $g$ Heegaard splitting surface $S$ in $S^3$ bounding handlebodies $W_0$, $W_1$ such that, for $i=0,1$, $L$ intersects $W_i$ transversely in a trivial $b$-arc system. We assume that the point at infinity is contained in $W_1$. Consider $S\times I$ be a product regular neighbourhood of $S$ in $S^3$, and let $h: S\times I \rightarrow I$ be the natural projection map. We denote the level surfaces $h^{-1}(r)= S \times \{r\}$ by $S_r$ for each $0\leq r \leq 1$. $S_0$ bounds a handlebody  $H_0$, and $S_1$ bounds a handlebody $H_1$, such that $S^3= H_0 \cup (S\times I) \cup H_1$. Assume that $S_0 \subset W_0$, $S_1\subset W_1$, and that $h\vert (S\times I) \cap L$ has no critical points (so $(S\times I) \cap L$ consists of monotone arcs).

Let $F$ be a surface properly embedded in the exterior $E(L)=S^3 - int N(L)$. We say that $F$ is essential if it is incompressible, $\bd$-incompressible and not boundary parallel. 

By general position, an essential  surface can always be isotoped in $E(L)$ so that:

\begin{description}
\item[(M1)] $F$ intersects $S_0 \cup S_1$ transversely; we denote the surfaces $F \cap H_0$, $F \cap H_1$, $F \cap (S\times I)$ by $F_0$, $F_1$, $\tilde{F}$, respectively;
\item[(M2)] each component of $\bd F$ is either a level meridian circle of $\bd E(L)$ lying in some level set $S_r$ or it is transverse to all the level meridians circles of $\bd E(L)$ in $S \times I$;
\item[(M3)]for $i=0,1$, any component of $F_i$ containing parts of $L$ is a cancelling disk for some arc of $L \cap H_i$; in particular, such cancelling disks are disjoint from any arc of $L\cap H_i$ other than the one they cancel;
\item[(M4)] $h\vert \tilde{F}$ is a Morse function with a finite set $Y(F)$ of critical points in the interior of $\tilde{F}$, located at different levels; in particular, $\tilde{F}$ intersects each noncritical level surface transversely.
\end{description}

We define the complexity of any surface satisfying $(M1)-(M4)$ as the number
\begin{center}
$c(F)= \vert \bd F_0 \vert +  \vert \bd F_1 \vert + \vert Y(F) \vert$,
\end{center}

\noindent where $\vert Z \vert$ stands for the number of elements in the finite set $Z$, or the number of components of the topological space $Z$.

We say that $F$ is \textit{meridionally incompressible} if whenever $F$ compresses in $S^3$ via a disk $D$ with $\bd D= D \cap F$ such that $D$ intersects $L$ in one point interior to $D$, then $\bd D$ is parallel in $F$ to some boundary component of $F$ which is a meridian circle in $\bd E(L)$; otherwise, $F$ is \textit{meridionally compressible}. Observe that if $F$ is essential and meridionally compressible then a meridional surgery on $F$ produces a new essential surface in $E(L)$.

 Let $K$ be a $(1,1)$-knot in $S^3$, in this case the handlebodies $H_0$, $H_1$ are solid tori, and $S$ is a torus. A slope in $S$ is a class of isotopy of essential simple closed curves. Slopes in $S\times \{ x\}$ are in one to one correspondence with slopes in $S\times \{ y\}$, for any $x,y \in I$, so comparing curves on differents tori is accomplished via this correspondence. If $\beta$ and $\gamma$ are two slopes on $S$, $\Delta(\beta, \gamma)$ denotes, as usual, their minimal geometric intersection number. Denote by $\mu$ the essential simple closed curve in $S_0$ which bounds a meridional disk in $H_0$, and denote by $\lambda$ the essential simple closed curve in $S_1$ which bounds a disk in $H_1$. So $\Delta(\mu, \lambda)=1$. Thus a $(p_i,q_i)$-curve in $S_i$ represents a class of isotopy of essential simple closed curves $p_i \mu + q_i \lambda$ in $S_i$.

 The following lemma can be found in \cite{RV}, where it is stated for spanning surface. Here we consider only spanning orientable surfaces, that is Seifert surfaces, so the original statement is slightly reduced.
 
\begin{lemma}
\label{lem1}
Suppose $K$ is a $(1,1)$-knot and $K$ is not a torus knot. Let $F'$ be a Seifert surface for $K$ in $S^3$ such that $F = F' \cap E(K)$ is essential in $E(K)$. If $F$ is isotoped so as to satisfy (M1)-(M4) with minimal complexity, then $\vert Y(F)\vert= 1-\chi(F)$, and
\begin{enumerate}
\item each critical point of $h\vert \tilde{F}$ is a saddle,
\item for $0\leq r \leq 1$ any circle component of $S_r \cap F$ is nontrivial in $S_r - K$ and in $F$, and not parallel in $F$ to $\bd F$,
\item for $i=0,1$, $F_i$ consists of one cancelling disk and a collection of disjoint annuli each having boundary slope $(p_i, q_i)$ in $S_i$ with $\vert q_0 \vert, \vert p_1\vert \geq 2$. Each solid torus cobounded by an annulus in $F_i$ and the annulus in $S_i$, for $i=0,1$,  contains the corresponding cancelling disk, and
\item the saddle closest to either the 0-level or 1-level does not join circle components.
\end{enumerate}
\end{lemma}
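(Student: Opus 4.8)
The plan is to fix an isotopic copy of $F$ realizing the minimal complexity $c(F)$ among all surfaces satisfying (M1)--(M4), and to extract (1)--(4) by showing that each forbidden configuration supports an isotopy of $F$ (preserving (M1)--(M4)) that strictly decreases $c(F)$, contradicting minimality; the equality $|Y(F)|=1-\chi(F)$ will then be a bookkeeping consequence of (1) and (3). For (1), suppose $h|\tilde F$ has a center, say a maximum at level $r$ (minima are symmetric), and pick an outermost one; just below it there is a circle $c=S_{r-\epsilon}\cap F$ bounding in $\tilde F$ a cap disk $D\subset S\times[r-\epsilon,r]$. If $c$ is inessential in $S_{r-\epsilon}$, an innermost-disk argument on $S_{r-\epsilon}$ (using that $F\cap K=\emptyset$ in $E(K)$, so the disk on $S_{r-\epsilon}$ can be chosen disjoint from $F$ and from $K$) lets me push $D$ across $S_{r-\epsilon}$, cancelling the maximum and lowering $|Y(F)|$. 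If $c$ is essential in $S_{r-\epsilon}$, then $D$ is a disk properly embedded in the solid torus $H_1\cup(S\times[r-\epsilon,1])$ with essential boundary, so $c$ is its meridian, hence a $\lambda$--curve; it bounds a meridian disk $D_1$ of $H_1$ disjoint from the trivial arc $K\cap H_1$, and $D\cup D_1$ is a sphere disjoint from $K$. By irreducibility of $S^3$ this sphere bounds a ball through which $F$ can be isotoped across $S_{r-\epsilon}$, again dropping $|Y(F)|$, unless the ball is obstructed by other sheets of $F$ in a way that forces $K$ onto the Heegaard torus, i.e.\ $K$ a torus knot, which is excluded. Hence no centers occur and every critical point is a saddle.

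For (2), suppose some circle $c=S_r\cap F$ is trivial in $S_r-K$, and take one innermost in $S_r$ among all such; since $F$ is incompressible, $c$ bounds a disk $D$ in $F$, and by irreducibility $D$ together with the disk $c$ bounds in $S_r-K$ cobounds a ball across which $F$ is isotoped over $S_r$, strictly decreasing $|\bd F_0|+|\bd F_1|$ or $|Y(F)|$. If $c$ is essential in $S_r-K$ but bounds a disk in $F$, the argument of the previous paragraph shows $c$ is a $\mu$-- or $\lambda$--curve and produces either a complexity reduction or a torus knot; and if $c$ were parallel in $F$ to $\bd F$, the annulus between them in $F$ yields an isotopy of $\bd F$ lowering complexity. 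Thus every level circle is essential in $S_r-K$, nontrivial in $F$, and not $\bd$--parallel in $F$. Note also a byproduct used below: a disk component of $F_i$ other than the cancelling disk would have boundary a circle of $F\cap S_i$ bounding a disk in $F$, contradicting (2).

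For (3), (M3) identifies the component of $F_i$ meeting $K$ as a cancelling disk, and a standard innermost/outermost argument makes each other component incompressible in the solid torus $H_i$, hence a meridian disk or a $\bd$--parallel annulus; meridian disks are excluded by the byproduct just noted, so the remaining components are $\bd$--parallel annuli, all of one slope $(p_i,q_i)$ because their boundary circles are disjoint on $S_i$. For $i=0$: if $q_0=0$ the boundary circles are meridians bounding disks in $H_0$, and capping the annulus with two such disks gives a sphere whose ball yields a complexity-lowering isotopy, so $q_0\neq0$; if $|q_0|=1$ the slope is a longitude of $H_0$, so the annulus is parallel into $S_0$ and can be pushed out of $H_0$, lowering $|\bd F_0|$, unless the solid torus it cobounds with the corresponding annulus of $S_0$ contains an obstruction, which can only be the cancelling disk. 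The same blocking dichotomy applies for every $q_0$, giving the last two assertions of (3); the case $i=1$ is identical with $\mu$ and $\lambda$ interchanged, yielding $|p_1|\geq2$.

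Finally (4) is immediate from (2): below the lowest saddle $\tilde F$ is a product, so all circles of $F\cap S_{s-\epsilon}$ are boundaries of annuli of $F_0$, hence parallel essential curves on the torus $S$; a saddle joining two of them would produce a band sum of two parallel essential curves along a band in a level torus, and such a band sum bounds a disk there, contradicting (2) (the highest saddle is symmetric). For the count, by (1) all critical points are saddles, so $\chi(\tilde F)=\chi(S_r\cap F)-|Y(F)|=1-|Y(F)|$ for generic $r$ (the generic level has $|Y(F)|$-independent Euler characteristic $1$, being circles together with one arc, since $\bd F$ meets each level in two points), and by (3) $\chi(F_i)=1$ while $\chi(F\cap S_i)=1$; inclusion--exclusion gives $\chi(F)=\chi(F_0)+\chi(F_1)+\chi(\tilde F)-\chi(F\cap S_0)-\chi(F\cap S_1)=1-|Y(F)|$, i.e.\ $|Y(F)|=1-\chi(F)$. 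The main obstacle is the center-elimination step: precisely the subcase in which the level circle at an extremum is essential on the Heegaard torus, where one must convert a homological coincidence into a genuine complexity-reducing isotopy by exploiting irreducibility of $S^3$, incompressibility of $F$, and the hypothesis that $K$ is not a torus knot; the analogous essential-circle subcases in (2) and the blocking analysis in (3) rest on the same technique.
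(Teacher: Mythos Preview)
The paper does not give its own proof of this lemma: it simply cites \cite{RV}, remarking that the statement there (for general spanning surfaces) specializes to the orientable case. So there is no in-paper argument to compare against, and your proposal must stand on its own. Your overall strategy---kill each forbidden configuration by a complexity-reducing isotopy---is the standard one and is surely what \cite{RV} does; parts (1), (2) and the Euler-characteristic bookkeeping are fine in outline. There are, however, two real gaps.

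\textbf{In (3) you never exclude $|q_0|=1$.} You show $q_0\neq 0$, then observe that the $\partial$-parallel annulus $A$ can be pushed out of $H_0$ unless the parallelism solid torus $W$ contains the cancelling disk, and finally say ``the same blocking dichotomy applies for every $q_0$''. But that sentence only yields the nesting assertion; it does nothing to rule out $|q_0|=1$. The missing step is that when $|q_0|=1$ the curve $\partial A$ meets the meridian $\mu$ of $H_0$ once, so a meridian disk of $H_0$ meets $A$ in a single spanning arc and its half inside $W$ is a meridian disk of $W$; thus $W\cong A\times I$. The trivial arc $K\cap H_0$, together with its cancelling disk, can then be straightened in this product so that $A$ may be pushed across $W$ and out of $H_0$ after all, dropping $|\partial F_0|$ by two. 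This is precisely why $|q_0|\geq 2$ is needed for the obstruction to be genuine, and symmetrically for $|p_1|\geq 2$.

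\textbf{In (4) the argument is incomplete.} You only treat the case where the extreme saddle merges two distinct circles (type~4 in the paper's later terminology), and even there the conclusion ``such a band sum bounds a disk there, contradicting (2)'' fails in general: the band lies in the annular region between two adjacent $\gamma_i,\gamma_j$, and if the arc $\alpha$ lies in that same region then the resulting trivial-in-$S_r$ circle bounds a disk containing $\alpha$ and both points of $K\cap S_r$, hence is \emph{non}trivial in $S_r-K$, so (2) gives nothing. You also do not address the self-saddle case (one circle splitting into two). A cleaner route for type~4 is to push the saddle below $S_0$, which lowers both $|\partial F_0|$ and $|Y(F)|$ by one; for the self-saddle case one must argue further, since the analogous push leaves $c(F)$ unchanged.
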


We will agree that the saddle closest to the 0-level is the first saddle, and  the saddle closest to the 1-level is the last saddle, the saddles in between will be called second, third, and so on.

Assuming the hypothesis of  Lemma \ref{lem1} we have the following lemma:

\begin{lemma}
\label{lem2} The first saddle point joins the arc, corresponding to the cancelling disk, with itself, changing it into one arc and one essential simple closed curve. The last saddle point, when read backwards does the same.
\end{lemma}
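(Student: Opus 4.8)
The plan is to read off the level curves just above the $0$-level, use parts (3) and (4) of Lemma~\ref{lem1} to cut the possibilities for the first saddle down to three, and then discard the two unwanted ones: one by an Euler characteristic count, the other by a complexity-reducing isotopy. The claim for the last saddle will follow by symmetry.

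First I would set up the local picture. Let $r_1$ be the level of the first saddle. For $0<r<r_1$ the map $h\vert\tilde F$ has no critical points, so $S_r\cap F$ is carried down to $\bd F_0$; by Lemma~\ref{lem1}(3), $F_0$ is one cancelling disk $D_0$ together with $n_0$ disjoint annuli of slope $(p_0,q_0)$, whence $S_r\cap F$ consists of exactly one arc $a$ (the trace of $\bd D_0\cap S_0$) together with $2n_0$ parallel circles of slope $(p_0,q_0)$. Write $\mathcal D$ for the disk obtained by attaching to $D_0$ the vertical rectangle $a\times[0,r_1]$: this is a whole component of $F\cap(H_0\cup S\times[0,r_1))$, and it is still a cancelling disk for $t_0:=K\cap H_0$, pushed up to just below level $r_1$. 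Each annulus of $F_0$ similarly extends to a vertical annulus $\mathcal A$ whose boundary is two of the circles above, and by Lemma~\ref{lem1}(3) the corresponding solid torus $\tilde V$ (cobounded by $\mathcal A$ and an annulus of $S_r$) contains $\mathcal D$; thus the $\tilde V$'s are nested, and I may take an innermost one, $\tilde V_i$, with $\tilde V_i\cap F=\mathcal D\cup\mathcal A_i$ and $\tilde V_i\cap K=t_0$. (If $n_0=0$ there are no circles at all and case (III) below does not arise.)

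Next I would analyze the first saddle. By Lemma~\ref{lem1}(4) it does not join two circle components, so its band $\beta$ has a foot on the arc $a$; as $a$ is the only arc, there are precisely three cases: (I) both feet on $a$, the reconnection splitting off a circle (the ``$\alpha$'' smoothing), which is exactly the assertion; (II) both feet on $a$, the reconnection producing again a single arc; (III) one foot on $a$, the other on a circle. In case (II) the component of $F\cap\{h\le r\}$ (for $r$ just above $r_1$) containing the resulting arc is $\mathcal D\cup\beta$, a connected orientable surface with $\chi(\mathcal D\cup\beta)=\chi(\mathcal D)+\chi(\beta)-\chi(\mathcal D\cap\beta)=1+1-2=0$ and, in this reconnection, a single boundary circle; since $2-2g-b$ cannot vanish for $b=1$, case (II) is impossible. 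Hence whenever the band has both feet on $a$ it is as claimed, and by Lemma~\ref{lem1}(2) the new circle is nontrivial in $S_r-K$ and in $F$ and not $\bd$-parallel in $F$, i.e.\ essential.

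It remains to rule out case (III), which I expect to be the main obstacle. There the band $\beta$ joins $a$ to a circle $c_j$, which I may take to be a boundary circle of the innermost annulus $\mathcal A_i$, with $c_{j'}$ its other boundary. Then $\mathcal D\cup\beta\cup\mathcal A_i$ is an annulus whose boundary consists of the level circle $c_{j'}$ and a circle meeting $\bd E(K)$ in a single arc, and this annulus lies in the solid torus $\tilde V_i$, with $\beta\subset\bd\tilde V_i$ and $\mathcal D$ a cancelling disk for the unknotted arc $t_0\subset\tilde V_i$. The plan is to exploit this to isotope $F$ — sliding $K$ along $\mathcal D$ and rearranging $\mathcal A_i$ inside $\tilde V_i$ — to a new position satisfying (M1)--(M4) with strictly smaller complexity $c(F)=\vert\bd F_0\vert+\vert\bd F_1\vert+\vert Y(F)\vert$: either one fewer saddle, or two fewer boundary circles of $F_0$ at the cost of fewer than two new saddles. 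Either outcome contradicts minimality, so case (III) cannot occur. Making this isotopy precise, while keeping $K$ in standard position and leaving the rest of $F$ unchanged, is the delicate point. Finally, the assertion about the last saddle follows by running the same argument on the Heegaard splitting with the roles of $H_0,H_1$, and of the $0$- and $1$-levels, interchanged — which exchanges $\mu\leftrightarrow\lambda$, hence the inequalities $\vert q_0\vert\ge 2$ and $\vert p_1\vert\ge 2$ — that is, by reading $h$ backwards.
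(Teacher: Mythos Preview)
Your setup and case division are sound, and your Euler-characteristic elimination of case~(II) is correct (indeed more explicit than the paper, which silently relies on orientability). However, there is a genuine gap in your treatment of case~(I).

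The word ``essential'' in the statement means essential in the torus $S_r$, not merely in $S_r-K$. This is how the lemma is used later (e.g.\ in Lemma~\ref{silla3tipo4}, where a trivial curve in $S_r$ encircling the arc is ruled out \emph{by} Lemma~\ref{lem2}). Your appeal to Lemma~\ref{lem1}(2) gives only that the new circle $c$ is nontrivial in $S_r-K$ and in $F$; it does \emph{not} exclude the possibility that $c$ bounds a disk $D\subset S_r$. In that situation $D$ necessarily contains both points of $K\cap S_r$ (else $c$ would be trivial in $S_r-K$), hence also the arc $\alpha_1$. The paper disposes of this case by producing a compressing disk for $F$: one finds a disk $D'$ with $\partial D'=c$ lying on the $H_0$-side, such that $D\cup D'$ bounds a $3$-ball containing the (extended) cancelling disk; then $D'\cap F=c$ and $D'\cap K=\emptyset$, so $D'$ compresses $F$, contradicting essentiality. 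You need some argument of this kind; Lemma~\ref{lem1}(2) alone is not enough.

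For case~(III) you have the right target (a complexity drop) but only a sketch. The paper's concrete move is worth knowing: push the saddle down below level~$0$, so that the annulus $A$ and the cancelling disk fuse into a single component $A'\subset F_0$ (an annulus with one boundary an arc on $S_0$ and the other the remaining circle $\lambda$); then push a neighborhood of a spanning arc of $A'$ back up through $S_0$. The net effect is that $\vert Y(F)\vert$ is unchanged while $\vert\partial F_0\vert$ drops by~$2$, contradicting minimality. Your ``either one fewer saddle, or two fewer boundary circles at the cost of fewer than two new saddles'' is aiming at exactly this, but the first alternative does not actually occur, and you should exhibit the isotopy rather than assert its existence.
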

\begin{proof}
Consider the first saddle. Suppose it joins the arc component $\alpha_0$ of  $F \cap S_0$ to a circle component $\beta$ of $F \cap S_0$, and $\beta$  corresponds to one boundary component of an annuli $A$ component of $F_0$.  An arc component $\gamma$ is created in a level slightly above the saddle level. Pushing down the saddle slightly below level 0 isotopes $F$ in such a way that the annulus $A$ glued with the cancelling disk along the arc $\alpha_0$ becomes a component $A'$ of $F_0$ which intersects $S_0$ in an arc $\gamma$ and a curve $\lambda = \bd A - \beta$. 
Pushing up a regular neighborhood of a spanning arc in $A'$ with end  points in $\beta$ and $\lambda$
isotopes $F$ to a surface that satisfies (M1)-(M4), with the same number of critical points but the number $\vert \bd F_0\vert$ decreases by 2, contradicting the assumption that $c(F)$ is minimal. 

Suppose the first saddle changes one arc  into one arc $\alpha$ and one trivial simple closed curve $c$; by $(2)$ of Lemma \ref{lem1}, $c$ is not trivial in $F$. There is a disk $D$ bounded by $c$ on $S\times \{ r\}$, for some $r \in (0,1)$. If $int(D)$ does not contain the arc $\alpha$ then $D$ is a compressing disk for the surface $F$, which is not possible. If $\alpha$ is contained in $int(D)$, then we can find disk $D'$ with boundary  $c$  such that $D' \cup D$ bounds a 3-ball that contains the cancelling, thus $D'$ is a compression disk for $F$ which is a contradiction.

The proof for the case of the last saddle is similar.
\end{proof}

\section{Morse position for genus two surfaces for $(1,1)$-knots}
\label{sec3}

\begin{definition} An essential properly embedded annulus $A$ in $S\times I$ is called a \textit{spanning annulus for a $(1,1)$-knot $K$}, if $A$ can be isotoped to be disjoint from $K$ and its boundary slope in $S_0$ is of the form $(p_0,q_0)$ for some $\vert p_0\vert, \vert q_0 \vert \geq 2$.
\end{definition}

\begin{lemma}
\label{lem4}
Let $K$ be a $(1,1)$-knot in $S^3$. If there is  a spanning annulus for $K$, then $K$ is either the trivial knot, or a $(p,q)$-torus knot or $K$ is a satellite of a $(p,q)$-torus knot.
\end{lemma}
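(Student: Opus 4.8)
The plan is to analyze the position of the spanning annulus $A$ relative to the genus one Heegaard torus $S$ and use the structure of $S\times I$ together with the two solid tori $H_0$ and $H_1$. Since $A$ is disjoint from $K$, it lies in $E(K)$, and we may take $A$ to be an essential annulus whose boundary on $S_0$ (hence on both level tori, via the slope correspondence) is a $(p_0,q_0)$-curve with $|p_0|,|q_0|\geq 2$. The key point is that such a slope is neither the meridian $\mu$ of $H_0$ (which is $(1,0)$) nor the disk slope $\lambda$ of $H_1$ (which is $(0,1)$), nor the slope $(1,\pm 1)$; in fact $\Delta$ of this slope with both $\mu$ and $\lambda$ is at least $2$. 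First I would push $A$ to intersect $S_0\cup S_1$ minimally and examine $A\cap H_0$ and $A\cap H_1$: each component is an incompressible, $\partial$-incompressible planar surface in a solid torus whose boundary curves on the bounding torus are either the slope $(p_0,q_0)$ or trivial. Incompressibility forces no trivial curves, and an incompressible $\partial$-incompressible planar surface in a solid torus with non-meridional boundary slope must be an annulus parallel to an annulus in the boundary torus (a standard fact). Hence $A\cap H_i$ is a collection of $\partial$-parallel annuli.

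Having reduced each piece to boundary-parallel annuli, I would isotope them away: a $\partial$-parallel annulus in $H_i$ can be pushed across $S_i$ into $S\times I$, reducing $|A\cap S_i|$. After this cleanup, either $A$ is disjoint from $S_0\cup S_1$ — so $A$ lies entirely in $S\times I$ with boundary slope $(p_0,q_0)$ on $S$ — or $A$ is disjoint from $S_0$ but meets $S_1$ (or symmetrically), in which case the portion in $H_1$ is again a collection of boundary-parallel annuli that can be removed; so in all cases we may assume $A\subset S\times I$ is an essential annulus with boundary on the two level tori. Now $S\times I$ is a trivial $I$-bundle over the torus, and an essential annulus in $T^2\times I$ is, up to isotopy, $c\times I$ for some essential simple closed curve $c$ on $T^2$; here $c$ represents the slope $(p_0,q_0)$. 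So $S\times I$ is split by $A$, and gluing back $H_0$ and $H_1$ we see that $S^3$ is obtained by gluing two solid tori $H_0, H_1$ along annular pieces of their boundaries together with the product region $A\times I$-neighborhood.

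The conclusion then comes from analyzing the knot $K$ sitting in this decomposition. The annulus $A$ cuts the Heegaard torus $S$ along the curve $c$ of slope $(p_0,q_0)$; since $|p_0|,|q_0|\geq 2$, the curve $c$ is a nontrivial torus knot/link pattern. Push $A$ slightly off $S$ to a concentric annulus $A'$; the solid torus $V$ on one side of $A'$ (cobounded by $A'$ and an annular neighborhood in $S$) is unknotted in $S^3$ in a way determined by the Heegaard structure, and $K$ lies inside the complementary region. One shows that $K$ is contained in the solid torus $V$ (or its complement), that $V$ is knotted as a $(p,q)$-torus knot — this is exactly the standard picture of how an annulus of slope $(p_0,q_0)$ on the Heegaard torus presents a torus knot neighborhood — and therefore $K$ is either contained in a ball inside $V$ (giving the trivial knot after noting it's then trivial in $S^3$ or, more precisely, unknotted), isotopic to the core of $V$ (a $(p,q)$-torus knot), or genuinely knotted inside $V$ (a satellite of the $(p,q)$-torus knot). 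I expect the main obstacle to be the bookkeeping in the second-to-last step: carefully identifying which solid torus $K$ lies in and verifying that the relevant solid torus is knotted precisely as a $(p,q)$-torus knot rather than unknotted, since this depends on tracking the gluing of $H_0$ and $H_1$ across the annular pieces and using $\Delta(\mu,\lambda)=1$. The case analysis — trivial vs. core vs. satellite — is then routine once $K\subset V$ and the knot type of $V$ are pinned down.
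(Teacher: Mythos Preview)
Your overall strategy --- locate a solid torus whose core is a $(p,q)$-torus knot and which contains $K$, then run a trichotomy --- is the paper's as well, but two points need correcting.

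First, the opening step is unnecessary. By definition a spanning annulus is already a properly embedded essential annulus in $S\times I$, disjoint from $K$, with boundary slope $(p_0,q_0)$ satisfying $|p_0|,|q_0|\ge 2$. There is nothing in $H_0$ or $H_1$ to clean up. The paper proceeds directly: cut $S\times I$ along $A$ to obtain $V\times I$, where $V$ is the annulus obtained by cutting $S$ along $c$; this is a solid torus whose core is the $(p,q)$-torus knot. That is cleaner than your construction of $V$ via ``pushing $A$ slightly off $S$,'' which does not quite parse, since $A$ already runs from $S_0$ to $S_1$ and is not contained in any single level torus.

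Second --- and this is the real gap --- your trichotomy fails in the ``trivial'' branch. You assert that if $K$ lies in a ball inside $V$ then $K$ is unknotted, but of course every knot lies in a ball; this step needs the $(1,1)$-hypothesis, which your outline never invokes again after the first sentence. The paper's argument is that after cutting $S\times I$ along $A$, the knot $K$ remains in $1$-bridge position with respect to the annulus $V$, meeting each level $V\times\{t\}$ in two points. If the wrapping number of $K$ in $V\times I$ is zero, there is a compressing disk for $V\times I$ disjoint from $K$; one isotopes it to a vertical disk $\alpha\times I$ while keeping $K$ in $1$-bridge position, and cutting along it places $K$ in $1$-bridge position inside $(\text{disk})\times I$, which forces $K$ to be the unknot. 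If the wrapping number is at least one, then $K$ is either the $(p,q)$-torus knot or a satellite of it. The bookkeeping you flagged as the main obstacle is in fact routine once you cut along $A$; the substantive step you are missing is this use of the bridge structure in the wrapping-number-zero case.
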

\begin{proof}
Let $A$ be a spanning annulus for $K$ in $S \times I$, with boundary slope $(p,q)$,   $\vert p\vert, \vert q \vert \geq 2$. Cut $S\times I$ open along $A$ to obtain a solid torus; we can identify this solid torus with  $V\times I$ where $V$ an annulus, with core the $(p,q)$-torus knot. After cutting, the knot $K$ is in 1-bridge position with respect to $V$. For every $t \in I$, the knot $K$ intersects $V \times \{ t\}$ in two points. If $K$ has wrapping number equal to $0$, then there is a compressing disk $D$ for  $V \times I$ disjoint from $K$. After an isotopy we can assume that $D$ is isotopic to $\alpha \times I$, where $\alpha$ is an arc properly embedded  in $V$ with endpoints in different boundary components of $V$ such that $K$ remains in 1-bridge position with respect to $V$. Cutting along $\alpha \times I$ we obtain $E\times I$, where $E$ is a disk, and the knot $K$ in 1-bridge position with respect to $E$. This implies that $K$ in the trivial knot. If $K$ has wrapping number $\geq 1$, the knot $K$ is either a $(p,q)$-torus knot or a satellite of a $(p,q)$ torus knot.
\end{proof}

From now on we will assume that $K$ is a genus two knot. For a  $(p,q)$ torus knot it is known that its genus is given by $\frac{1}{2}(p-1)(q-1)$, thus if $K$ is a genus two torus knot then it must be the $(5,2)$ torus knot. Therefore, we can assume further that $K$ is not a torus knot.

\begin{lemma}
\label{lem6}
Let $K$ be a genus two  $(1,1)$-knot in $S^3$, and assume there is no spanning annulus for $K$ in $S\times I$. Let $F$  be a genus two Seifert surface for $K$ which has been isotoped to satisfy (M1)-(M4) with minimal complexity. Then $F \cap S_i$ has at most two circle components for $i=0,1$.
\end{lemma}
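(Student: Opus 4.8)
The goal is to bound the number of circle components of $F \cap S_i$ by two, for a genus two Seifert surface $F$ realized in minimal-complexity Morse position, under the assumption that no spanning annulus exists. By Lemma \ref{lem1}, $|Y(F)| = 1 - \chi(F) = 1 - (1 - 2\cdot 2) = 4$ saddles (since $\chi(F) = 1 - 2g = -3$ for a once-punctured genus two surface), each saddle is a saddle, all circle components at every level are essential in $S_r - K$, essential in $F$ and non-peripheral in $F$, and $F_i$ consists of one cancelling disk together with disjoint annuli of slope $(p_i,q_i)$ with $|q_0|, |p_1| \geq 2$. I would track the "level picture" as a sequence of multicurves on the torus $S_r$ (together with the arc pieces of $K$), starting from level $0$ and moving up through the four saddles.

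**Main steps.** First, I would record that $F \cap S_0$ consists of (i) the boundary of the cancelling disk — one arc $\alpha_0$ — and (ii) some number $n_0$ of circles, each the inner boundary of an annulus component of $F_0$; all these circles are parallel on $S_0$ with common slope $(p_0,q_0)$, and by Lemma \ref{lem1}(3) with $|q_0|\ge 2$. By Lemma \ref{lem2}, the first saddle joins $\alpha_0$ to itself, converting it into one arc and one essential simple closed curve $c_1$; symmetrically the last (fourth) saddle, read backwards, does the same near level $1$. So of the four saddles, the first and last are "arc self-banding" saddles; only the second and third saddles are free to interact with the pre-existing circle families. Next I would argue that each of the $n_0$ parallel $(p_0,q_0)$-circles persists upward: a saddle changes the level multicurve only locally, so the circles coming from $F_0$ survive through the first saddle, and the key claim is that if $n_0 \geq 3$, the annuli they bound, together with the structure forced by only two "interior" saddles, produce a spanning annulus. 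Concretely, after the first saddle we have on $S_r$ an arc, the curve $c_1$, and $n_0$ parallel copies of the $(p_0,q_0)$-curve; between levels, the only modifications are at the second and third saddles. A saddle joining two of these parallel circles merges them into one circle (still of the same slope, or into a circle bounding a disk — excluded by Lemma \ref{lem1}(2)) or splits one into two; a saddle joining a parallel circle to $c_1$ changes $c_1$'s slope. With only two such saddles available to "clean up" $n_0 \ge 3$ parallel circles before they must all either die into $F_1$'s annuli (slope $(p_1,q_1)$) or be capped, a counting/parity argument shows some parallel $(p_0,q_0)$-annulus runs monotonically from $F_0$ all the way through $S\times I$, i.e., is isotopic to a genuine spanning annulus, contradicting the hypothesis. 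The symmetric argument from the top bounds the circles at $S_1$.

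**The main obstacle.** The delicate point is ruling out the configuration where exactly two interior saddles reduce three-or-more parallel circles without creating a spanning annulus — one must carefully analyze which circles a saddle can join (parallel-to-parallel, parallel-to-$c_1$, or $c_j$-to-$c_j$), use essentiality (Lemma \ref{lem1}(2)) to forbid the creation of disk-bounding or $F$-trivial circles, and then show that the "surviving" family of $(p_0,q_0)$-annuli cannot all be absorbed into the $(p_1,q_1)$-annuli of $F_1$ unless one of them threads straight through. I expect this to require a case split on whether the second and third saddles involve $c_1$ (respectively $c_4$) or only the parallel family, and in the worst case an appeal to minimality of $c(F)$: if too many parallel annuli survive, one can isotope $F$ to cancel a pair of boundary circles (reducing $|\bd F_0|$ or $|\bd F_1|$) without adding saddles, exactly as in the proof of Lemma \ref{lem2}. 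The bookkeeping of how a banding acts on slopes of parallel torus curves, and the Euler-characteristic accounting ($\chi$ is additive under the saddle moves: each saddle drops $\chi$ by $1$) to pin down that at most two circle families can coexist, is where the real work lies.
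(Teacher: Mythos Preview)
Your approach is essentially the paper's: both use that $|Y(F)|=4$ with the first and last saddles forced by Lemma~\ref{lem2} to be arc self-bandings, leaving only the second and third saddles to interact with the circle families, and then argue by counting that too many initial circles force an annulus component of $\tilde F$ to run unperturbed from $S_0$ to $S_1$.

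Two corrections to your sketch. First, the circles on $S_0$ are boundaries of annulus components of $F_0$ and hence come in pairs; $n_0$ is even, so the relevant threshold is $n_0\geq 4$, not $n_0\geq 3$. Second, your assertion that a saddle joining two of the parallel essential circles yields a curve ``still of the same slope, or \dots bounding a disk --- excluded by Lemma~\ref{lem1}(2)'' is wrong on both counts: banding two parallel torus curves always produces a null-homotopic curve, and Lemma~\ref{lem1}(2) (nontriviality in $S_r-K$, not in $S_r$) does \emph{not} exclude this when the disk it bounds contains the arc. The paper explicitly allows that configuration and then observes that the third saddle is forced to eliminate the trivial curve, which constrains how many of the original circles can be involved.

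Where you leave the counting as a sketch, the paper carries it out concretely, splitting into the cases $n_0\geq 6$ and $n_0=4$. In the first, after the first saddle there are at least seven circles; the two interior saddles drop this to at least five, and matching with the count just below the fourth saddle forces at least one circle to flow untouched to $S_1$. In the second, a short case analysis on whether the second and third saddles join the arc to a circle, two circles together, or a circle to itself shows that at most three of the five circles present after the first saddle are ever involved, so at least two of the four original circles are unperturbed and at least one reaches $S_1$. Your fallback to minimality of $c(F)$ is not needed; the counting alone suffices.
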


\begin{proof}
Lemma \ref{lem1} implies that $F_0$, $F_1$ consist of one cancelling disk and a collection of disjoint annuli. Thus $F  \cap (S_0 \cup S_1)$ consists of an even number of circle components. Notice that $\vert Y(F) \vert =4$ and by Lemma \ref{lem2} the first and fourth saddles join an arc with itself, creating two new circle components. Let first assume that $F \cap S_0$ contains at least six circle components and one arc, and $F\cap S_1$ contains $2n$ curves and one arc. Thus, a level  $S_r$ slightly above the first saddle contains at least seven circle components and one arc, and a level $S_{r'}$ slightly below the fourth saddle  contains $2n+1$  circle components and one arc. We are left with two saddles to connect the circles to complete the genus 2 surface.  After the third saddle occurs we are left with one arc and at least five curves, which have to be connected with the arc and the $2n+1$ curves in $S_{r'}$. If these two sets of curves have different cardinality, the surface can not be completed. If the cardinalities agree, then at least one circle component of $F \cap S_0$ must flow along an annulus component of $\tilde {F}$ from $S_0$ to $S_1$ without interacting with the saddles. Thus $\tilde{F}$ has at least one annulus component which, by Lemma \ref{lem1}, has boundary slope of the form $(p,q)$ in $S_0$ for some $\vert p \vert, \vert q \vert \geq 2$, and must be a spanning annulus for $K$. Thus this case is not possible.

Now suppose that  $F \cap S_0$ contains four circle components and one arc, and $F\cap S_1$ contains at most four circle components and one arc. A level  $S_r$ slightly above the first saddle contains five circle components and one arc, and a level $S_{r'}$ slightly below the last saddle contains at most five circle component and one arc. If the second saddle connects two circle components, we will have a trivial curve in some level, which bounds a disk containing the arc, and then one of the curves used is the new curve created after the first saddle. Then the third saddle must eliminate this trivial curve, so the third saddle connects the trivial curve with the arc or with another circle. So, at most 3 circles were involved, including the one created after the first saddle. Then there are 2 circles in $S_0$, which are unperturbed. 
If the second saddle connects a circle with itself, we will have a trivial curve in some level, which bounds a disk containing the arc, and then the third saddle must eliminate this trivial curve, so the third saddle connects the trivial curve with the arc or with another circle. Again, there are 2 circles in $S_0$, which are unperturbed. The other possibility is that the second and third saddles connect the arc with two circles, but again, there are at least 2 unperturbed circles. In any case, the two unperturbed circles are joined to upper circles. So at least one circle component of $F \cap S_0$ must flow along an annulus component of $\tilde {F}$ from $S_0$ to $S_1$ without interacting with the saddles. Thus $\tilde{F}$ has at least one annulus component which, by Lemma \ref{lem1}, has boundary slope of the form $(p,q)$ in $S_0$ for some $\vert p \vert, \vert q \vert \geq 2$, and must be a spanning annulus for $K$.  Therefore $F \cap S_i$ has at most two circle components, for $i=0,1$.

\end{proof}

Given a $(1,1)$-knot $K$ and a genus two Seifert surface $F$ for $K$, assume $F$ has been isotoped so as to satisfy (M1)-(M4) with minimal complexity. By Lemma \ref{lem1} the surface $\tilde{F}$ contains exactly four saddle points. The saddles are of the following types:

\begin{description}
\item [\textbf{Type 1}] A saddle changing an arc into one arc and one simple closed curve.
\item [\textbf{Type 2}] A saddle changing an arc and one simple closed curve into an arc.
\item [\textbf{Type 3}] A saddle changing one simple closed curve into two simple closed curves.
\item [\textbf{Type 4}] A saddle changing two simple closed curves into one simple closed curve.
\end{description}

According to Lemma \ref{lem2}, the first saddle is of type 1, and the fourth saddle of type 2.
There are sixteen possible cases for the second and third saddle. Some cases can be discarded according to the following lemmas:

\begin{lemma}
\label{type4saddle}
The third saddle is not of type $3$.

\end{lemma}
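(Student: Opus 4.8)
The plan is to suppose that the third saddle is of type 3 and derive a contradiction, using the circle‑count bookkeeping of Lemma \ref{lem6} together with a homology argument at a saddle. Write $c_2,c_3$ for the levels of the second and third saddles, let $\delta$ be the circle component of $F\cap S_{c_3-\epsilon}$ that the third saddle splits, and let $\delta_1,\delta_2\subset F\cap S_{c_3+\epsilon}$ be the two circles it produces. The first step is to record that \emph{every circle component of $F\cap S_r$, for $r$ between the third and fourth saddles, is essential in the torus $S_r$}: by Lemma \ref{lem1}(3) every circle component of $F\cap S_1$ bounds an annulus of slope $(p_1,q_1)$ with $|p_1|\ge 2$, hence is essential in $S_1$; by Lemma \ref{lem2} the circle absorbed by the fourth saddle is essential; and nothing happens between $S_{c_3+\epsilon}$ and $S_1$ except that saddle, so reading downward through it gives the claim. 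In particular $\delta_1$ and $\delta_2$ are essential in $S_{c_3+\epsilon}$.

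Next I would run a homology computation on the pair of pants $P$ that the third saddle contributes to $\tilde F$, namely $P\subset S\times[c_3-\epsilon,c_3+\epsilon]$ with $\partial P=\delta\cup\delta_1\cup\delta_2$: in $H_1(S;\mathbb{Z})\cong\mathbb{Z}^2$ one gets, with suitable orientations, $[\delta]=[\delta_1]+[\delta_2]$. Since $\delta_1,\delta_2$ are disjoint essential curves on a torus they are isotopic, so $[\delta_1]=\pm[\delta_2]$; the case $[\delta_1]=[\delta_2]$ would make $[\delta]$ twice a primitive class, impossible for a simple closed curve, so $[\delta_1]=-[\delta_2]$ and $[\delta]=0$. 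A null‑homologous simple closed curve on the torus $S_{c_3-\epsilon}$ bounds a disk there, so $\delta$ is \emph{trivial} in $S_{c_3-\epsilon}$; by Lemma \ref{lem1}(2) $\delta$ is nontrivial in $S_{c_3-\epsilon}-K$, so the disk it bounds meets $K$, and it cannot meet $K$ in a single point because the arc component of $F\cap S_{c_3-\epsilon}$ joins the two points of $K\cap S_{c_3-\epsilon}$ and is disjoint from $\delta$; hence $\delta$ bounds a disk $D$ in $S_{c_3-\epsilon}$ meeting $K$ in exactly two points. Now such a circle — trivial in its level torus — cannot be a component of $F\cap S_0$ (those are essential, by Lemma \ref{lem1}(3)), is not the circle created by the first saddle (essential, by the argument in the proof of Lemma \ref{lem2}), and is not created by the fourth saddle; therefore $\delta$ must be created by the second saddle, which is consequently also of type 3. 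Running the circle count of Lemma \ref{lem6} with the saddle types $1,3,3,2$ then forces $F\cap S_0$ to have no circle components and $F\cap S_1$ to have exactly two, parallel, components. Combined with the disk $D$ this pins the whole picture down: $F_0$ is a single cancelling disk, $F_1$ is a cancelling disk together with one annulus $a$ joining the two circles of $F\cap S_1$, and the circle created by the first saddle, the curves $\delta_1,\delta_2$, and the two circles of $F\cap S_1$ are all parallel, of a common slope $s=(p_1,q_1)$ with $|p_1|\ge 2$.

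The main obstacle is eliminating this rigid configuration. The natural candidate is to extract from it a spanning annulus for $K$ — contradicting our standing assumption, since by Lemma \ref{lem4} such an annulus would make $K$ a torus knot or a satellite of one, cases already set aside. The annulus $a\subset F_1$ is incompressible in the solid torus $H_1$ (its core has slope $s\neq\lambda$ and $\partial a$ wraps $|p_1|\ge 2$ times longitudinally, so $a$ is neither compressible nor core‑parallel), hence $\partial$‑parallel in $H_1$; by Lemma \ref{lem1}(3) the solid torus it cuts off containing the cancelling disk is the one meeting $K$, so $a$ can be isotoped, keeping it disjoint from $K$, across the complementary solid torus onto $S_1$, and glued to the two vertical annuli over $\delta_1$ and $\delta_2$. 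The technical heart of the proof is to turn this into an honest essential spanning annulus — arranging its boundary to lie on $S_0\cup S_1$ and checking that its slope satisfies $|p_1|,|q_1|\ge 2$ (or, failing that, adapting the proof of Lemma \ref{lem4} to the slope at hand). If instead the annulus so produced is inessential, the parallelism among all of these curves should instead yield an isotopy of $F$ lowering $c(F)$, contradicting the minimality assumption; this branch is the one I expect to require the most care.
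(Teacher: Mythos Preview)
Your approach diverges sharply from the paper's, and it runs into real trouble. The paper's argument is three lines: a type~3 third saddle splits its input curve $\delta$ into one essential and one trivial curve, the trivial one necessarily bounding a disk that contains the arc (else it would be trivial in $S_r-K$, against Lemma~\ref{lem1}(2)); the fourth saddle would then have to join the arc to that trivial curve, contradicting Lemma~\ref{lem2}. Your first step instead reads the picture down from $S_1$: since the circles on $S_1$ are essential (Lemma~\ref{lem1}(3)) and the fourth saddle absorbs an essential curve (Lemma~\ref{lem2}), every circle just above the third saddle is essential, so $\delta_1,\delta_2$ are both essential and your homology relation forces $\delta$ itself to be \emph{trivial}. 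That observation is correct --- and worth noting, since it sits in tension with the paper's ``essential and trivial'' assertion, which tacitly presumes $\delta$ is essential.

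But from here your proof has two genuine gaps. First, the claim that the second saddle is ``consequently also of type~3'' is not justified: a type~1 second saddle (arc $\to$ arc $+$ curve) can perfectly well produce a trivial curve whose disk contains the new arc, and a type~4 second saddle has not yet been ruled out at this stage of the paper (Lemma~\ref{secondisnottype4} is proved \emph{after} Lemma~\ref{type4saddle}). So the saddle pattern $1,3,3,2$ on which you base your circle count is only one of several possibilities for how $\delta$ could arise. Second, even granting that pattern, the remainder of your argument --- assembling a spanning annulus from the annulus $a\subset F_1$ and the vertical annuli over $\delta_1,\delta_2$, or else showing that $c(F)$ can be lowered --- is left as a sketch; you yourself flag the second branch as the one requiring ``the most care'', and neither branch is carried through. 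As written, the proposal is not a proof.
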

\begin{proof}
If the third saddle is of type 3, it joins one curve with itself, changing it into an essential curve and a trivial curve such that the arc is contained in the disk bounded by the trivial curve. So when the fourth saddle occurs, it must join and arc with a trivial curve, which contradicts Lemma \ref{lem2}.   
\end{proof}

\begin{lemma}
The second saddle is not of type $4$.
\label{secondisnottype4}
\end{lemma}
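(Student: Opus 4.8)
The plan is to observe that the statement is precisely the ``upside down'' version of Lemma~\ref{type4saddle}, and to deduce it by reversing the Morse function. Concretely, replace $h\colon S\times I\to I$ by $1-h$. This interchanges the level surfaces $S_r$ and $S_{1-r}$, hence interchanges the handlebodies $H_0$ and $H_1$ and reverses the order of the four saddle points of $h|\tilde F$; at the same time it interchanges the saddle types $1\leftrightarrow 2$ and $3\leftrightarrow 4$. Under this reversal the first saddle of $h$ becomes the fourth of $1-h$, the second becomes the third, and so on. Thus the \emph{second} saddle of $h$, read with respect to $1-h$, is the \emph{third} saddle (the second one from the top), and a type~$4$ saddle becomes a type~$3$ saddle.

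The first thing to check is that the whole Morse-theoretic setup is symmetric under this reversal, so that Lemma~\ref{type4saddle} can legitimately be applied to $1-h$. The conditions (M1)--(M4) are stated symmetrically in the roles of $S_0$ and $S_1$; the complexity $c(F)=|\bd F_0|+|\bd F_1|+|Y(F)|$ is unchanged when $\bd F_0$ and $\bd F_1$ are interchanged, so $F$ still realizes minimal complexity with respect to $1-h$; and the conclusions of Lemma~\ref{lem1} and Lemma~\ref{lem2} are symmetric as well (Lemma~\ref{lem2} is already phrased so that the last saddle, read backwards, behaves like the first). The hypotheses on $K$ (genus two, not a torus knot, no spanning annulus in $S\times I$) are intrinsic and unaffected by the reversal. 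Consequently all hypotheses of Lemma~\ref{type4saddle} hold for $1-h$, and that lemma tells us the third saddle of $1-h$ is not of type~$3$; translating back, the second saddle of $h$ is not of type~$4$.

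I expect the only real point to verify is that the \emph{proof} of Lemma~\ref{type4saddle} uses nothing that breaks under the reversal, and indeed it does not: that proof only invokes the ordering of the saddles together with Lemma~\ref{lem2}, both of which are symmetric. For completeness I would also record the direct (mirror) argument, which is the one obtained by running the proof of Lemma~\ref{type4saddle} ``from the top'': if the second saddle of $h$ were of type~$4$, then read from level $1$ downward it joins a simple closed curve with itself, changing it into an essential curve and a trivial curve whose bounding disk contains the arc; by Lemma~\ref{lem1}$(2)$ and the usual compressibility argument (as in Lemma~\ref{lem2}) this trivial curve cannot survive, so the next saddle encountered, namely the first saddle of $h$, would have to join the arc with that trivial curve. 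But Lemma~\ref{lem2} says the first saddle joins the arc with itself, producing an \emph{essential} simple closed curve, a contradiction. The main (mild) obstacle in this direct route is re-deriving the structural claim that a type~$4$ saddle, read backwards, separates off a trivial curve enclosing the arc; this mirrors the corresponding assertion in the proof of Lemma~\ref{type4saddle} and uses that circle components are essential in $F$ and nontrivial in the levels minus $K$. The cleanest write-up, however, is the one-line reduction to Lemma~\ref{type4saddle} via $h\mapsto 1-h$.
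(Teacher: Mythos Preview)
Your main argument—the reduction to Lemma~\ref{type4saddle} via the reversal $h\mapsto 1-h$—is correct and is a genuinely different route from the paper's. The paper does \emph{not} argue by symmetry; instead it observes that the two curves entering a putative type~4 second saddle are both essential, that (by Lemma~\ref{lem1}) one of them must be the curve $d$ just produced by the first saddle, and then isotopes $\tilde F$ so that the first and second saddles sit at the same level and can be interchanged. After the interchange the new first saddle joins the arc to a circle (type~2), contradicting Lemma~\ref{lem2}. Your approach trades this local saddle-swap for the global symmetry of the setup, which is cleaner and avoids having to identify which specific curves the second saddle is using; the paper's approach, on the other hand, is self-contained and does not need to invoke Lemma~\ref{type4saddle} as a black box.

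One caution about your supplementary ``direct mirror'' argument: the structural claim you record there is actually false in this situation. Reading the second saddle of $h$ downward, the single curve being split is the curve $e$ produced (in the forward direction) by joining two essential parallel curves via a type~4 saddle; homologically $[e]=0$, so $e$ is \emph{trivial} (encircling the arc), and the split returns the two \emph{essential} curves that were joined—not one essential and one trivial as you assert. Thus the literal proof of Lemma~\ref{type4saddle} (which tacitly treats the curve being split as essential) does not transplant verbatim to this reversed picture. This does not damage your one-line symmetry reduction, since you are invoking the \emph{statement} of Lemma~\ref{type4saddle} under symmetric hypotheses; but you should drop or rewrite the ``for completeness'' paragraph, as it does not mirror correctly.
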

\begin{proof}
Suppose the  second saddle is of type 4, then two level curves are joined into a single curve. By Lemma \ref{lem2}, all the curves just before the second saddle are essential. By Lemma \ref{lem1}, one of the curves is the one created by the first saddle. The surface $\tilde{F}$ can be isotoped in such a way that the first and second saddles are at the same level and we can interchange them. Now the first saddle joins an arc with a curve, contradicting Lemma \ref{lem2}.
\end{proof}

According to the lemmas above, we have  the possibilities shown in Table \ref{tabla2}.

  \begin{center}
 \begin{table}
 \begin{tabular}{| c | c| c |  }
 \hline
 case  & 2nd. saddle& 3rd. saddle  \\ \hline
 1  & type 1& type 1 \\ \hline
 2 & type 1& type 2 \\ \hline
 
 3 & type 1& type 4 \\ \hline
 4 & type 2& type 1 \\ \hline
 5  & type 2& type 2 \\ \hline

 6 & type 2& type 4 \\ \hline
 7 & type 3& type 1 \\ \hline
8 & type 3& type 2 \\ \hline

9  & type 3& type 4 \\ \hline

 \end{tabular}
\caption{}
 \label{tabla2}
 \end{table}
 \end{center}

\begin{lemma}
\label{silla3tipo4}
If third saddle is of type $4$ it cannot join two essential curves into a single curve.
\end{lemma}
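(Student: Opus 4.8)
The plan is a proof by contradiction. Suppose the third saddle is of type $4$ and bands two essential simple closed curves $c_1,c_2$ of a level torus $S_r$ into a single curve $c$. Any two disjoint essential simple closed curves in a torus are parallel, so $c_1$ and $c_2$ cobound two annuli in $S_r$, and the band effecting the saddle lies in one of these annuli, call it $A$; thus $c$ is obtained from the two boundary circles of $A$ by a band move carried out inside $A$. Such a curve always bounds a disk in $A$: in $H_1(A)\cong\mathbb{Z}$ its class is $\pm[c_1]\pm[c_2]$, hence $0$ or twice a generator, and no embedded simple closed curve in an annulus represents twice a generator. Therefore $c$ bounds a disk in $A\subset S_r$, i.e.\ $c$ is inessential in the torus $S_r$.

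Now I trace $c$ upward. The only critical point of $h\vert\tilde{F}$ above the third saddle is the fourth one, which by Lemma \ref{lem2} is of type $2$ and, read downward from $S_1$, sends the arc component to the arc component together with an essential simple closed curve; equivalently, the unique simple closed curve that the fourth saddle bands to the arc is essential in the torus. If the fourth saddle does not involve $c$, then $c$ survives unchanged to $S_1$, so $F\cap S_1$ has a component isotopic to $c$ and hence inessential in the torus $S_1$, contradicting part $(3)$ of Lemma \ref{lem1}, by which every circle component of $F\cap S_1$ is a boundary curve of an annulus of $F_1$ of slope $(p_1,q_1)$ with $\vert p_1\vert\geq 2$ and is therefore essential. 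If the fourth saddle does involve $c$, then since $c$ is a simple closed curve it has to be the simple closed curve occurring in this type $2$ saddle, which we just observed is essential, contradicting that $c$ is inessential. In either case we reach a contradiction, so a third saddle of type $4$ cannot band two essential curves. Note that this argument does not use which of the cases $3,6,9$ of Table \ref{tabla2} one is in.

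The only step that needs real care is the opening one: showing that banding two parallel essential simple closed curves in a torus necessarily yields an inessential curve. Everything after that is bookkeeping — after the third saddle only the fourth saddle can act on $c$, and Lemma \ref{lem2} together with part $(3)$ of Lemma \ref{lem1} rule out both the possibility of eliminating $c$ at the fourth saddle (it would have to play the role of the essential curve there) and the possibility of $c$ persisting to the top level $S_1$.
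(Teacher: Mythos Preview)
Your proof is correct and follows essentially the same line as the paper's: both show that banding two essential level curves on the torus produces a trivial curve, and then obtain a contradiction at the fourth saddle via Lemma~\ref{lem2}. The only difference is cosmetic: the paper notes (implicitly using Lemma~\ref{lem1}(2)) that this trivial curve must encircle the arc, so the fourth saddle is forced to use it, whereas you replace that observation by a short case split, invoking Lemma~\ref{lem1}(3) when the trivial curve is not used and survives to $S_1$.
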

\begin{proof}
Suppose the two curves are essential and  the arc at that regular level is contained in the annulus bounded by the curves. After the third saddle occurs a trivial curve surrounds the arc, then the fourth saddle joins the arc with the trivial curve, which is not possible, by Lemma \ref{lem2}.
\end{proof}

\begin{lemma}
\label{lem12}
If third saddle is of type $1$ it cannot change one arc into one arc and a trivial curve.
\end{lemma}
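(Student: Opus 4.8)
The plan is to argue by contradiction, exploiting that the third saddle has exactly one saddle above it, the fourth, whose nature is already determined by Lemma \ref{lem2}. Suppose the third saddle is of type $1$ but, against the claim, changes the arc into an arc together with a circle $c$ that is trivial in the level surface $S_r$ in which it is created, so $c$ bounds a disk $D\subset S_r$. If $D$ contains no point of $K\cap S_r$, then $c$ bounds a disk in $S_r-K$, contradicting Lemma \ref{lem1}(2); so I may assume $D$ meets $K$, in which case $c$ need not be excluded by Lemma \ref{lem1}(2) and genuine work is required.

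The core step is to follow the circle $c$ upward toward the $1$-level. Between two consecutive critical levels the circle $c$ sweeps out an annulus component of $\tilde{F}$, and since an embedded annulus has isotopic boundary curves, the slope of $c$ in the level tori is unchanged along this flow; hence $c$ stays a trivial curve until it meets a saddle. As the only saddle above the third is the fourth, I would then split into two cases. If $c$ does not meet the fourth saddle, then it persists unchanged to $S_1$, so $F\cap S_1$ has a trivial circle component; but by Lemma \ref{lem1}(3) every circle component of $F\cap S_1$ has boundary slope $(p_1,q_1)$ and is therefore essential in $S_1$, a contradiction. If $c$ does meet the fourth saddle, then, since by Lemma \ref{lem2} the fourth saddle is of type $2$ and (read backwards) changes one arc into one arc together with an \emph{essential} simple closed curve, the unique circle consumed by the fourth saddle is essential; but that circle would have to be the trivial curve $c$, again a contradiction. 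Either way the lemma follows.

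The one point that needs care, and that I would write out in detail, is the invariance of the slope of $c$ under the upward flow, since this is what upgrades ``trivial where it is created'' to ``trivial at $S_1$''. This is not really an obstacle: each component of $\tilde{F}$ lying between two consecutive critical levels and carrying $c$ is an annulus with isotopic, hence equally sloped, boundary curves. The overall shape of the argument mirrors the proofs of Lemmas \ref{type4saddle} and \ref{silla3tipo4}, where a spurious trivial curve created near the top of the Morse picture is forced onto the last saddle and collides with Lemma \ref{lem2}.
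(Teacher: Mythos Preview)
Your argument is correct and essentially matches the paper's: both force the fourth saddle to consume a trivial curve, contradicting Lemma~\ref{lem2}. The paper avoids your case split by observing directly that the new arc must lie inside the disk $D$ bounded by $c$ (this is exactly your preliminary remark that $D$ must meet $K$, since the endpoints of the arc are the points of $K\cap S_r$), so the fourth saddle---which joins the arc to some curve---can only reach $c$ or a curve inside $D$, hence a trivial curve in either case; your Case~A is therefore impossible rather than merely contradictory, but your handling of it via Lemma~\ref{lem1}(3) is perfectly valid.
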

\begin{proof}
If the third saddle of type 1 changes one arc into an arc and a trivial curve, the new arc must be contained in the interior of the disk bounded by the trivial curve. Then the fourth saddle must join the arc with a trivial curve, this contradicts Lemma \ref{lem2}.
\end{proof}

 Let us denote by $F'_0$, $F_1'$ the components of $F_0$, $F_1$, respectively, other than the cancelling disks. Let us assume that  $K$ is neither a torus knot nor a satellite knot, by Lemma \ref{lem4} the product $S\times I$ contains no spanning annuli for $K$. Moreover, Lemma \ref{lem6} implies that $F'_i$ is an annulus or empty, for $i=0,1$. Without loss of generality, $F_0'$ and $F_1'$ fit in one of the following cases:
 
\begin{description}
\item [\textbf{Case A}] Both $F_0'$ and $F_1'$ are annuli.
\item [\textbf{Case B}] $F_0'$ is an annulus and $F_1'$ is empty.
\item [\textbf{Case C}] Both  $F_0'$ and $F_1'$ are empty.
\end{description}

 We will analyze the nature of the second and third saddle for each of the cases (A), (B), and (C). Each subcase will be denoted by \textbf{Ai, Bi} and \textbf{Ci}, with $i=1,...,9$, according to Table \ref{tabla2}.

 \textbf{Case A:} Both $F_0'$ and $F_1'$ are annuli. In this case $F\cap (S\times \{ i\})$, for $i=0,1$, consists of two non-trivial curves and one arc. Because of this, we can interchange the roles of the first and fourth saddle, and the second and third saddle. This observation will reduce the analysis of nine cases to six cases. Cases 1 and 5, cases 3 and 8, and cases 6 and 7 are equivalent. Thus the remaining cases are: 1, 2, 3, 4, 6 and 9.
 
 Next we count the number of curves which are  produced or eliminated by each of the saddles. We start with two curves and one arc. After passing through  the four saddles we have to end with two curves and one arc. 
 
 Cases 1 and 6 can not occur, since the last one eliminates all the curves and the former creates four curves.
 
 Thus the only possible cases are 2, 3, 4, and 9.
 
 Let $\alpha$, $b$ and $c$ be the arc and two essential closed curves, respectively,  in  $F\cap (S\times \{ 0\})$.  The first saddle, of type 1, changes $\alpha$ into an arc $\alpha_1$ and one essential curve $d$. Indeed $d$ is parallel to $b$ and $c$. Without loss of generality, we can assume that the arc $\alpha_1$ is contained in the annulus bounded by $b$ and $d$ on $S\times \{ \epsilon\}$, for some $\epsilon$-level slightly above the first saddle. 

\textbf{Case A2:} Second saddle of type 1 and third saddle of type 2. 

The second saddle changes $\alpha_1$ into an arc $\alpha_2$ and one curve $e$. 
If the curve $e$ is inessential, thus in a regular level slightly above the second saddle, $e$ bounds a disk that contains $\alpha_2$. The third saddle of type 2 joins the arc $\alpha_2$ and $e$ into one arc $\alpha_3$, finally the fourth saddle joins $\alpha_3$ either with $d$ or $b$.  In both cases the curve $c$ flows along an annulus component $A$ of $\tilde{F}$ from $S_0$ to $S_1$, the boundary slope of such annulus is $(p_0,q_0)$ in $S_0$ with $ \vert q_0 \vert \geq 2$, and is also $(p_1,q_1)$ in $S_1$, with $ \vert p_1 \vert \geq 2$, but $(p_0, q_0)= (p_1, q_1)$ implying  that $A$ is an spanning annulus for $K$, see Figure \ref{casoA2:sfig2} . Lemma \ref{lem4} implies that $K$ is a satellite knot, but this is not possible.

If the second saddle changes $\alpha_1$ into an arc $\alpha_2$ and one essential curve $e$.  There are two possibilities, the arc $\alpha_2$ lies in the annulus region between $b$ and $e$, or in the annulus region between $d$ and $e$. 

Suppose now that the arc $\alpha_2$ lies in the annulus region between $d$ and $e$. Then the third saddle changes $\alpha_2$ and $d$ (or $e$) into an arc $\alpha_3$. The fourth saddle then changes $\alpha_3$  and $e$ or $c$ ($b$ or $d$) into an arc $\alpha_4$. Then the curve $b$ ($c$) flows from $S_0$ to $S_1$, as above, we can find a spanning annulus for $K$,  which is a contradiction, see Figure \ref{casoA2:sfig3}.

Suppose then that the arc $\alpha_2$ lies in the annulus region between $b$ and $e$. If the third saddle changes $\alpha_2$ and $e$ into an arc $\alpha_3$, then the fourth saddle changes $\alpha_3$ and $b$ or $d$ into an arc $\alpha_4$, but then $c$ flows from $S_0$ to $S_1$, implying that $K$ is a satellite knot, since there would be a spanning annulus, as before, but this is not the case. The reader can picture this case using the one above. Therefore the third saddle must change $\alpha_2$ and $b$ into an arc $\alpha_3$, and the fourth saddle changes $\alpha_3$ and $c$ into an arc $\alpha_4$, for if it changes $\alpha_3$ and $d$, $c$ will flow from $S_0$ to $S_1$.

Summarizing, the second saddle changes $\alpha_1$ into an arc $\alpha_2$ and one essential curve $e$, such that $\alpha_2$ lies in the annulus region between $b$ and $e$.
The third saddle changes  $\alpha_2$ and $b$ into an arc $\alpha_3$, and the fourth one changes $\alpha_3$ and $c$ into an arc $\alpha_4$. See Figure \ref{casoA2:sfig4}. Observe that in this situation, the curve $c$ flows all the way from level 0 to a regular level $r$ below  the fourth saddle, defining an annulus $A_1$. From level $r$ to level 1, the curve $d$ flows and defines an annulus $A_2$. Since $c$ and $d$ have the same slope  we can glue $A_1$ and $A_2$ along the boundary on level $r$, obtaining an annulus $A$ disjoint from $K$. The slope of $A$ coincides with the  slope of $c$, $(p_0, q_0)$; and the slope of  $d$, $(p_1, q_1)$. Thus we have $(p_0,q_0)=(p_1, q_1)$ and, by Lemma \ref{lem1},  $\vert q_0 \vert \geq 2$ and  $\vert p_1 \vert \geq 2$, therefore $A$ is a spanning annulus for $K$, which is a contradiction. Hence this case is not possible.

\begin{figure}
\numberwithin{figure}{section}
\begin{subfigure}{.8\textwidth}
  \centering
  \includegraphics[width=.8\linewidth]{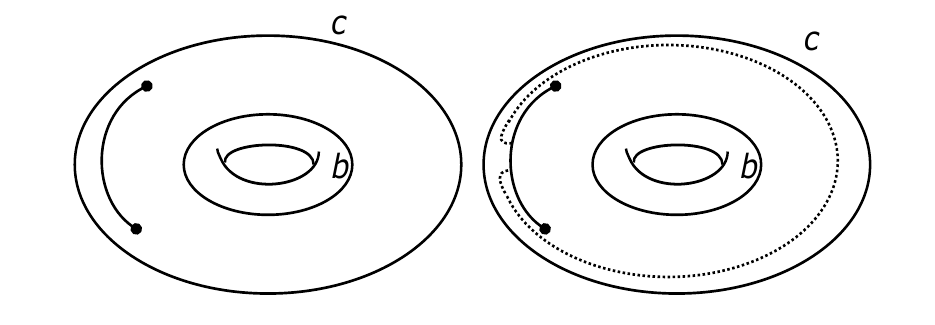}
  \caption{Initial condition}
  \label{casoA2:sfig1}
\end{subfigure}%
\hfill
\begin{subfigure}{.8\textwidth}
  \centering
 \includegraphics[width=1.5\linewidth]{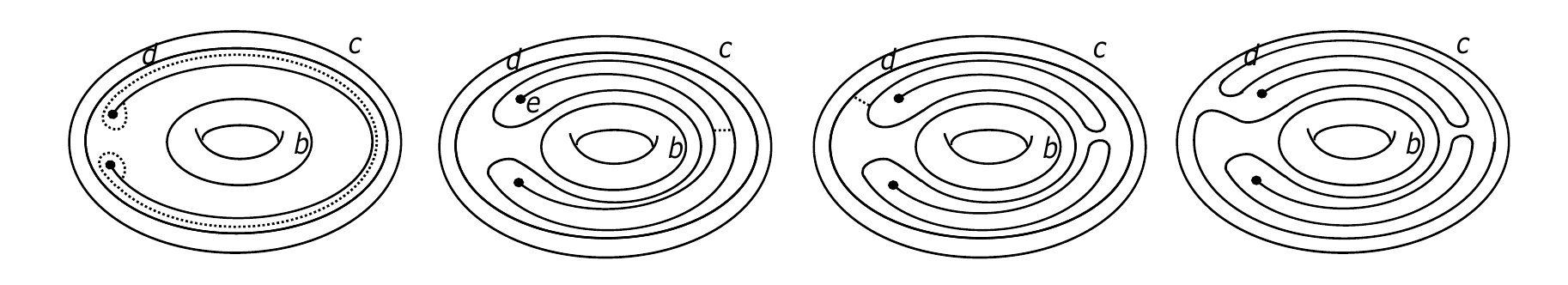}
 \caption{}
 \label{casoA2:sfig2}
\end{subfigure}
\hfill
\begin{subfigure}{.8\textwidth}
 \centering
 \includegraphics[width=1.5\linewidth]{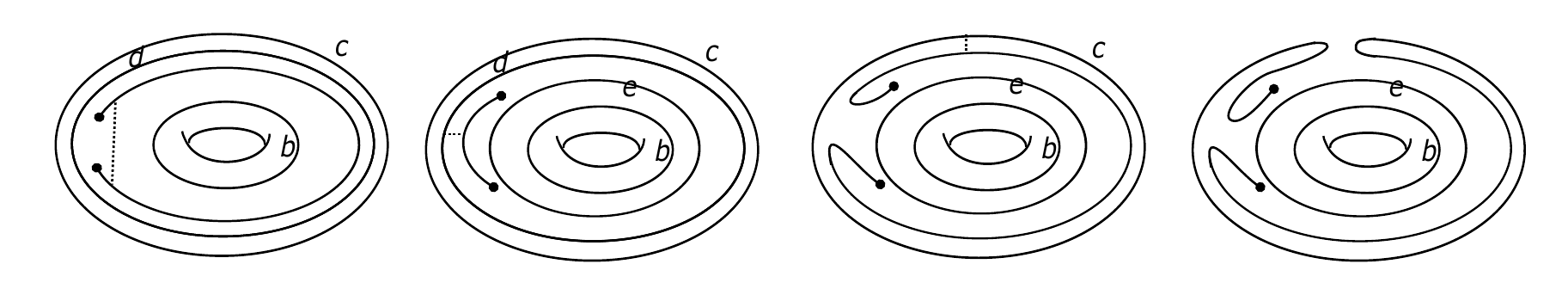}
 \caption{}
\label{casoA2:sfig3}
\end{subfigure}%
\hfill
\begin{subfigure}{.8\textwidth}
  \centering
  \includegraphics[width=1.5\linewidth]{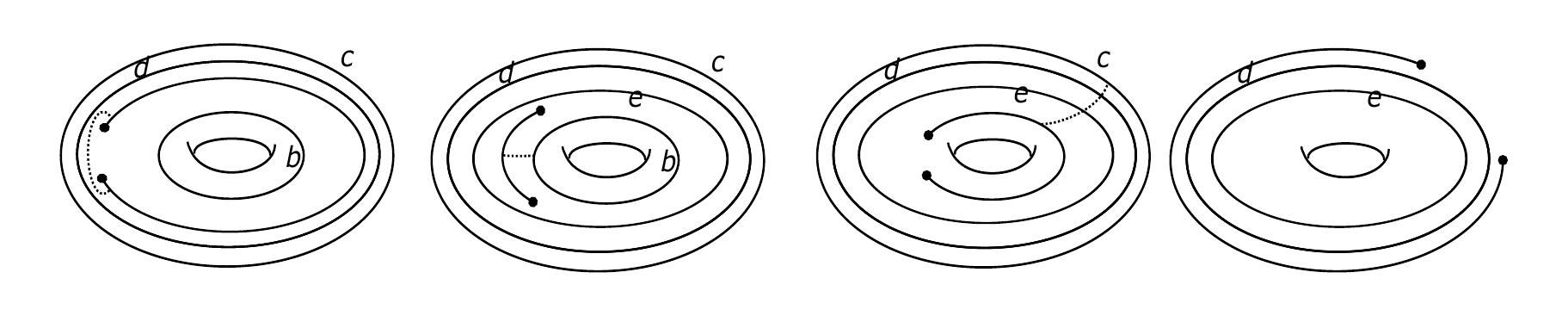}
 \caption{}
 \label{casoA2:sfig4}
\end{subfigure}
\caption{Case A2: The doted lines represent the saddles.}
\label{casoA2}
\end{figure}

 \textbf{Case A3:} Second saddle of type 1 and third saddle of type 4. 
 
 This case is similar to Case A2, up to the third and fourth saddle. Thus, above the second saddle, in a regular level $S_r$ we have one arc $\alpha_2$ and four curves $b, c,d,e$, where the first three are essential.
 If $e$ is essential as well, there are two possibilities, the arc $\alpha_2$ lies in the annulus region between $b$ and $e$, or in the annulus region between $d$ and $e$. The third saddle of type 4, changes either $b$ and $e$ or the curves $d$ and $e$ into a trivial curve, which contradicts Lemma \ref{silla3tipo4}.
 If $e$ is inessential, then the arc $\alpha_2$ is contained in the disk bounded by $e$. The third saddle of type 4, must join $e$ with either $b$ or $d$. If $e$ and $d$ are joined by the third saddle, a essential curve $f$ is created and the arc $\alpha_2$ lies in the annulus region between $f$ and $c$, the last saddle joins $\alpha_2$ with $f$ or with $c$, in either case the curve $b$ flows from level 0 to leve 1, thus we obtain a spanning annulus. 
 On the other hand, if the third saddle joins $e$ with $b$, then the arc $\alpha_2$ is contained in the annulus region by the curve $f$, created by the third saddle, and the curve $c$. Thus the fourth saddle joins the arc with $f$ or $c$. As in the last part of case $A_2$, we can create a spanning annulus for $K$. This case is impossible.

\textbf{Case A4:} Second saddle of type 2 and third saddle of type 1.

The second saddle may join $\alpha_1$ with $b$ or $d$. If the second saddle changes the arc $\alpha_1$ and the curve $d$ into one arc $\alpha_2$, then the third saddle will change $\alpha_2$ into $\alpha_3$ and an essential curve $e$, essentially is guaranteed by Lemma \ref{lem12}.  The fourth saddle will connect $\alpha_3$ with one of the curves $b$, $c$ or $e$. But in any case, $b$ or $c$ will flow from $S_0$ to $S_1$, which allow us to construct a spanning annulus, obtaining a contradiction.

So, suppose that the second saddle changes the arc $\alpha_1$ and the curve $b$ into one arc $\alpha_2$. The third saddle is of type 1 changing  the arc $\alpha_2$ into an arc $\alpha_3$ and an essential curve $e$. The fourth saddle connects $\alpha_3$ with the curve $e$, $d$ or $c$. In the first two cases $c$ will flow from $S_0$ to $S_1$, which is not possible since we can obtain a spanning annulus. Thus the fourth saddle joins $c$ and $\alpha_3$, in this situation, the curve $c$ flows all the way from level 0 to a regular level $r$ below  the fourth saddle, defining an annulus $A_1$. From level $r$ to level 1, the curve $d$ flows and defines an annulus $A_2$. Since $c$ and $d$ have the same slope  we can glue $A_1$ and $A_2$ along the boundary on level $r$, to obtain an annulus $A$ disjoint from $K$. By the argument used in case \textbf{A2}, it follows that $A$ is a spanning annulus for $K$. Hence case $A4$ cannot happen

\textbf{Case A9:} Second saddle of type 3 and third saddle of type 4.

The second saddle of type 3 changes one simple curve into two simple curves. There are two possibilities, either the saddle joins $b$ or $d$ with itself, for otherwise we get a compressing disk. Suppose that the saddle changes the curve $b$ into an essential curve $e$ and a trivial curve $f$ such that the arc $\alpha_1$ is contained in the disk bounded by $f$.  The annulus cobounded by $e$ and $c$ contains $f$ and $\alpha_1$. The third saddle changes two curves into one curve, Lemma \ref{silla3tipo4} implies that this saddle cannot join two essential curves, therefore there are two possibilities, $f$ and $e$ are joined, or $f$ and $c$ are joined. In the first situation, the curve $c$ flows from level 0 to level 1, giving rise to a spanning annulus. If $f$ and $c$ are joined, then the curve $c$ flows all the way from level 0 to a regular level $r$ below  the fourth saddle, defining an annulus $A_1$. From level $r$ to level 1, the curve $d$ flows and defines an annulus $A_2$. Since $c$ and $d$ have the same slope  we can glue $A_1$ and $A_2$ along the boundary on level $r$, we obtain an annulus $A$ disjoint from $K$. By the argument used in case \textbf{A2}, it follows that $A$ is a spanning annulus for $K$.

The other possibility is that the second saddle changes the curve $d$ into an essential curve $e$ and a trivial curve $f$ such that the arc $\alpha_1$ is contained in the disk bounded by $f$. The above argument shows that this case is impossible.

\textbf{Case B:} $F_0'$ is an annulus and $F_1'$ is empty. In this case $F\cap (S\times \{ 0\})$ consists of two non-trivial curves and one arc; and $F\cap (S\times \{ 1\})$ is just one arc. Again by counting the number of curves and arcs, we observe that the only possibilities are cases 5 and 6, from Table \ref{tabla2}. 

Let $\alpha$, $b$ and $c$ be the arc and two essential closed curves, respectively,  in  $F\cap (S\times \{ 0\})$.  The first saddle, of type 1, changes $\alpha$ into an arc $\alpha_1$ and one essential curve $d$. Indeed $d$ is parallel to $b$ and $c$. Without loss of generality, we can assume that the arc $\alpha_1$ is contained in the annulus bounded by $b$ and $d$ on $S\times \{ \epsilon\}$, for some $\epsilon$-level slightly above the first saddle.

\textbf{Case B5:} Second saddle of type 2 and third saddle of type 2.

Just above the first saddle there are 3 curves, $b$, $d$ and $c$, and the arc $\alpha_1$ lies between $b$ and $d$. The next 3 saddles are of type 2, and then the 3 curves are eliminated in certain order. The possibilities are: $d$, $b$, $c$, or $d$, $c$, $b$, or $b$, $d$, $c$, or  $b$, $c$, $d$. 

All these cases are possible and in all cases an essential annulus can be constructed, since the knot is not satellite such an  annulus can not be spanning for $K$.

Let us describe how we construct an essential annulus for the case $d$, $b$, $c$, for the other cases the essential annulus in constructed in a similar fashion.

Observe that from the 0-level to a regular level slightly below the fourth saddle the curve $c$ flows determining an annulus component $A$ of $\tilde{F}$. At fourth critical level we see that the arc $\alpha_3$, created by the third saddle, hits the curve c on some side of a small regular neighbourhood $N(c)$ on that level. Let us assume that the curve $c$ is touched on the $+$-side by the fourth saddle and let $c' \in N(c)$ be a copy of $c$ lying on the $-$-side of $c$, see right hand side of Figure \ref{B5:sfig3} . The annulus $A$ can be enlarged to an essential annulus $A'$ with boundary slope equal to the slope of $c$. Since $K$ is non satellite, the slope $(p_0, q_0)$ satisfies $\vert q_0 \vert \geq 2$ and  $\vert p_0\vert=1$.

\begin{figure}
\begin{subfigure}{.8\textwidth}
  \centering
  \includegraphics[width=.8\linewidth]{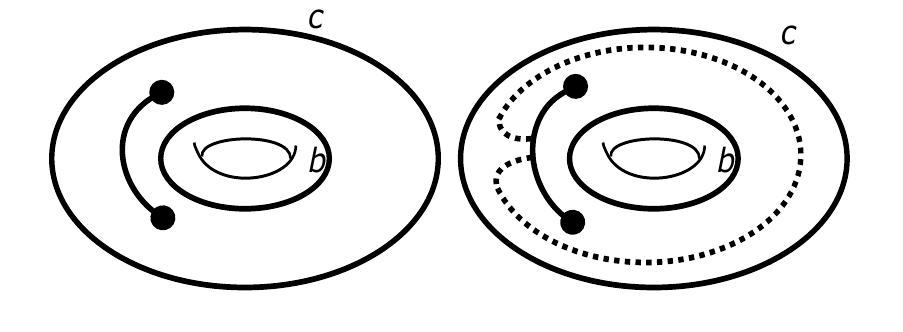}
  \caption{}
  \label{B5:sfig1}
\end{subfigure}%
\hfill
\begin{subfigure}{.8\textwidth}
  \centering
  \includegraphics[width=.8\linewidth]{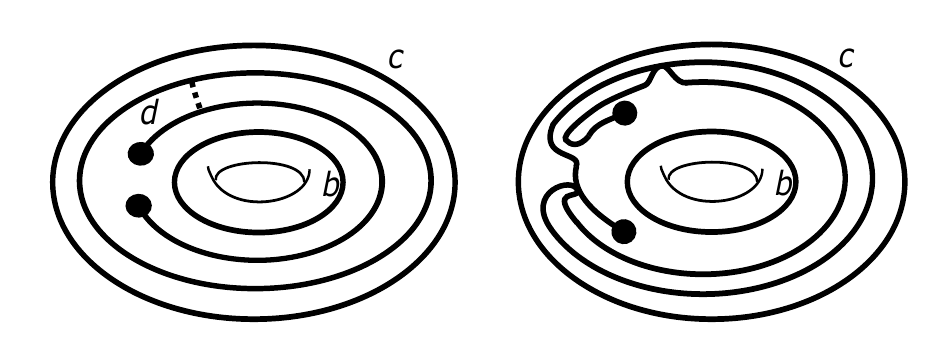}
  \caption{}
  \label{B5:sfig2}
\end{subfigure}
\hfill
\begin{subfigure}{.8\textwidth}
  \centering
  \includegraphics[width=1.2\linewidth]{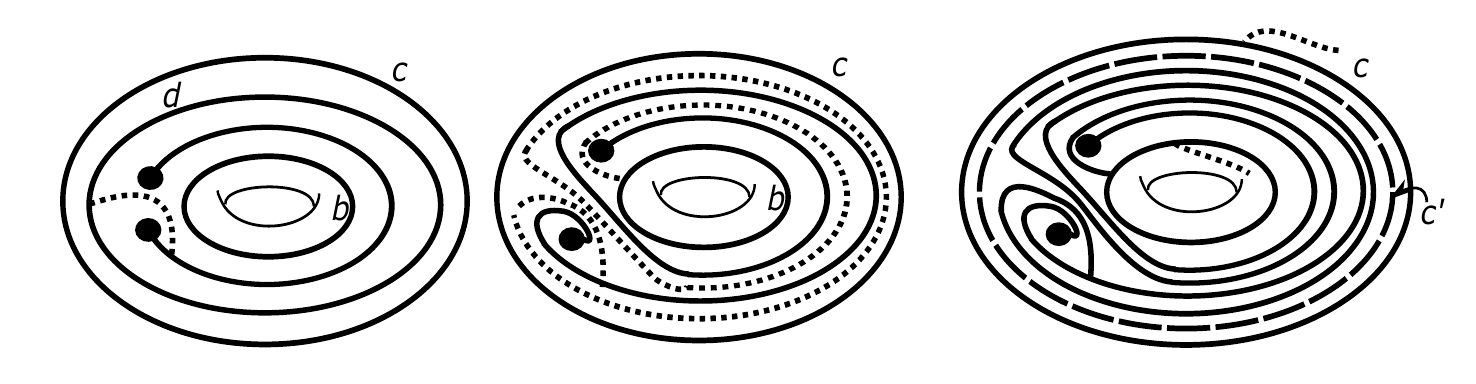}
  \caption{}
\label{B5:sfig3}
\end{subfigure}%

\caption{Case B5}
\label{B5}
\end{figure}

\textbf{Case B6:} Second saddle of type 2 and third saddle of type 4.

As above, after the second saddle occurs we have the curves $c$, $d$ (or $b$ and $c$) and the arc $\alpha_2$.
The third saddle changes the curves $c$ and $d$ (or $b$ and $c$) into a trivial curve, contradicting Lemma \ref{silla3tipo4}, therefore this is not possible.

 \textbf{Case C:} Both  $F_0'$ and $F_1'$ are empty. In this case $F\cap (S\times \{ i\})$, for $i=0,1$, consists of one arc. Similarly to case (A), the nine possibilities are reduced to six, and by counting arcs and curves we conclude that the only cases are: 2, 3, 4 and 9.
 
  Let $\alpha$ be the arc   in  $F\cap (S\times \{ 0\})$.  The first saddle, of type 1, changes $\alpha$ into an arc $\alpha_1$ and an essential curve $b$.

\textbf{Case C2:} Second saddle of type 1 and third saddle of type 2.

\begin{figure}
\numberwithin{figure}{section}
\begin{subfigure}{.8\textwidth}
  \begin{center}
  \includegraphics[width=.8\linewidth]{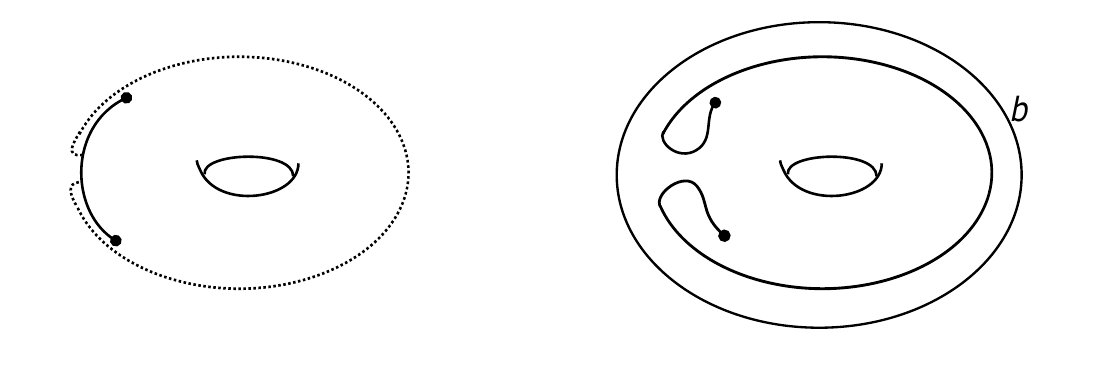}
  \end{center}
  \caption{Initial condition}
  \label{casoC2:sfig1}
\end{subfigure}
\hfill
\begin{subfigure} {.8\textwidth}
  \begin{center}
  \includegraphics[width=1.5\linewidth]{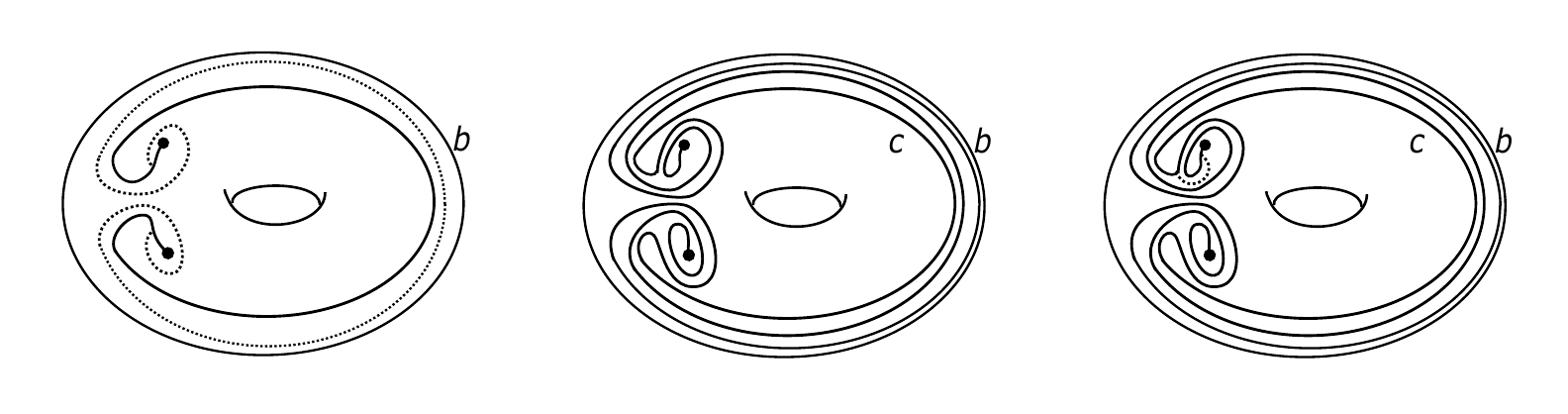}
  \end{center}
  \caption{}
  \label{casoC2:sfig2}
\end{subfigure}
\hfill
\begin{subfigure} {.8\textwidth}
  \begin{center}
  \includegraphics[height=40mm]{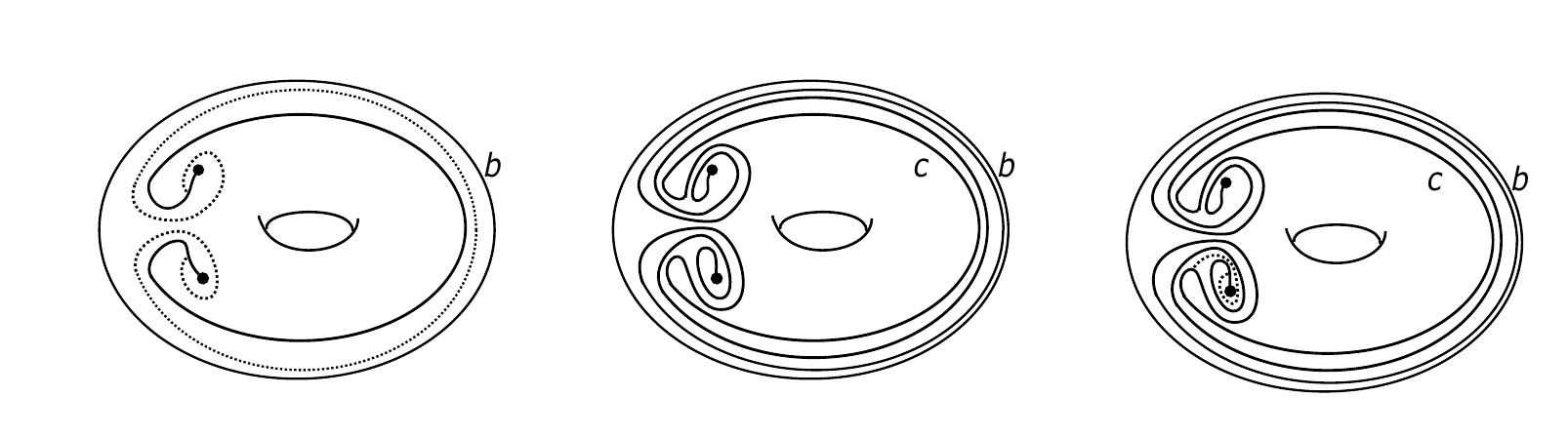}
  \end{center}
  \caption{}
\label{casoC2:sfig3}
\end{subfigure}%
\caption{Case C2: The doted lines represent the saddles.}
\label{casoC2}
\end{figure}

The second saddle changes the arc $\alpha_1$ into and arc $\alpha_2$ and a curve $c$.
The  third saddle must join $\alpha_2$ with $b$ (or $c$) into one arc $\alpha_3$, and finally this arc is joined with $c$ (or $b$) by the fourth saddle, into an arc $\alpha_4$. 

In the case that $c$ is a trivial curve, the disk bounded by $c$ must contain the arc $\alpha_2$. There are two possibilities for the third saddle which are illustrated in Figure \ref{casoC2}, in both cases we can interchange second and third saddle. In the case shown in Figure \ref{casoC2:sfig1}, we obtain a compressing disk, which contradicts the incompressibility of $F$. The other case gives a non orientable surface, which is not possible, see Figure  \ref{casoC2:sfig3}.

Thus $c$ is essential, and then $b$ and $c$ have the same slope $(p,q)$, $\vert p \vert$ and $\vert q \vert$ can not be greater or equal to 2 at the same time, otherwise we can construct a spanning annulus for $K$.

\textbf{Case C3:} Second saddle of type 1 and third saddle of type 4.

 \begin{figure}[ht]
\numberwithin{figure}{section}
\begin{center}
\includegraphics[height=40mm]{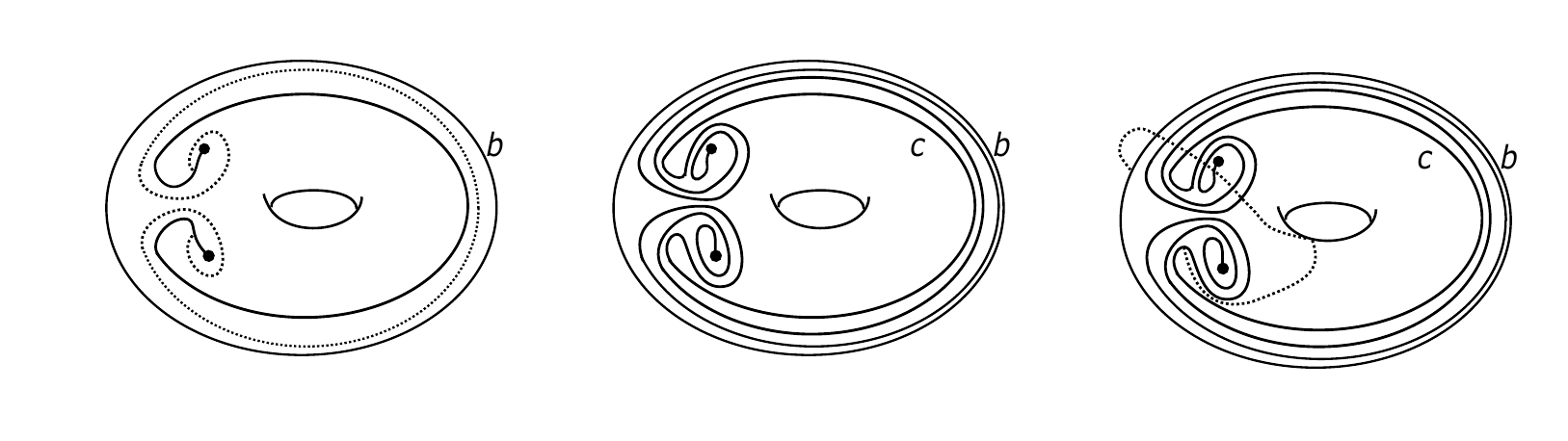}
\end{center}
\caption{Case C3}
\label{casoC3}
\end{figure}

As above, after the second saddle we have one arc $\alpha_2$ and two curves $b$ and $c$. 
If curve $c$ is inessential, the disk bounded by $c$ contains the arc $\alpha_2$, and the third saddle joins $c$ with $b$, as in Figure \ref{casoC3}. The saddles can be interchanged, in such a way that the second saddle becomes of type 2 and the third saddle of type 1, which is case \textbf{C4}.

If $c$ is essential, then third saddle must change $b$ and $c$ into a trivial curve, which is not possible by Lemma \ref{silla3tipo4}.

\textbf{Case C4:} Second saddle of type 2 and third saddle of type 1.

After the second saddle $b$ and $\alpha_1$ are transformed into an arc $\alpha_2$. After the third saddle $\alpha_2$ changes into an arc $\alpha_3$ and a curve $c$, and finally the fourth saddle changes $c$ and $\alpha_3$ into an arc $\alpha_4$. 

Notice that if $b$ and $c$ have the same slope $(p,q)$,  then $\vert p \vert$ and $\vert q \vert$ can not be greater or equal to 2 at the same time, otherwise we can construct a spanning annulus for $K$. See Figure \ref{caseC4}.

\textbf{Case C9:} Second saddle of type 3 and third saddle of type 4.

If this case happens, the second and third saddle can be level and a compressing disk is produced.

We have proved the following.

\begin{theorem}
\label{saddletheorem}
 Let $K$ be a genus two $(1,1)$-knot. Suppose $K$ is neither a torus knot nor a satellite knot. Let $F$ be a genus two Seifert surface for $K$ which satisfies $(M1)-(M4)$ with minimal complexity.  Then  component $F_0$ consists of a cancelling disk and at most one annulus; and the componente $F_1$ consists only of one cancelling disks. The sequences of types of saddles points for $\tilde{F}$ are:

\begin{center}
 
 \begin{tabular}[h]{| c | c | c | c | c |  }
 \hline
  case &1st. saddle& 2nd. saddle & 3rd. saddle &4th. saddle  \\ \hline

 B5 & type 1& type 2& type 2 & type 2 \\ 
  C2 & type 1& type 1& type 2 & type 2 \\ 
       C4 & type 1& type 2& type 1 & type 2 \\ \hline
 
 \end{tabular}

 \end{center}

\end{theorem}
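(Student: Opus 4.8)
The plan is to assemble the statement from the structural facts already established, so the proof is essentially a synthesis of the preceding case analysis. First I would record the numerology: since $F$ is a genus two Seifert surface with connected boundary, $\chi(F) = -3$, so by Lemma \ref{lem1} we have $|Y(F)| = 1-\chi(F) = 4$, and $F_0$, $F_1$ each consist of one cancelling disk together with a family of disjoint essential annuli. Writing $F_0'$, $F_1'$ for the non-cancelling parts, Lemma \ref{lem6} together with the hypothesis that $K$ is not a satellite — which via Lemma \ref{lem4} rules out any spanning annulus in $S\times I$ — forces each of $F_0'$, $F_1'$ to be a single annulus or empty. After the relabeling of the two solid tori fixed at the outset, this leaves exactly the three configurations \textbf{Case A} ($F_0'$, $F_1'$ both annuli), \textbf{Case B} ($F_0'$ an annulus, $F_1'$ empty), and \textbf{Case C} (both empty). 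Finally, Lemma \ref{lem2} pins the first saddle to be of type 1 and the fourth to be of type 2, while Lemmas \ref{type4saddle} and \ref{secondisnottype4} cut the possibilities for the second and third saddles down to the nine rows of Table \ref{tabla2}.

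The heart of the argument is to eliminate configurations by exhibiting a spanning annulus for $K$, which is impossible by Lemma \ref{lem4}. In \textbf{Case A} one starts at level $0$ with one arc and two essential circles and must end at level $1$ with the same data; using the symmetry between the $0$- and $1$-levels available here, together with the fact that saddles of types $1,3$ add a circle while types $2,4$ remove one, one rules out every row of Table \ref{tabla2} except $2,3,4,9$. In each surviving row one then locates a level circle that no saddle ever touches: such a circle flows along an annulus component $A$ of $\tilde{F}$ from $S_0$ to $S_1$, and by Lemma \ref{lem1}(3) its boundary slope is $(p_0,q_0)$ in $S_0$ with $|q_0|\geq 2$ and $(p_1,q_1)$ in $S_1$ with $|p_1|\geq 2$; since it is a single annulus, $(p_0,q_0)=(p_1,q_1)$, so $A$ is a spanning annulus — contradiction. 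When no circle survives untouched (as in the delicate sub-cases of A2, A4, A9), one instead glues the flow-annulus carrying a circle $c$ from level $0$ up to a level $r$ just below the last saddle to the flow-annulus carrying the parallel circle $d$ from $r$ up to level $1$; the two pieces share the same slope along $S_r$, so their union is again a spanning annulus. Hence \textbf{Case A} cannot occur.

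For \textbf{Case B} the same circle-and-arc count (one arc and two circles at level $0$, one arc at level $1$) leaves only rows $5$ and $6$; row $6$ would force a type-$4$ saddle merging two essential circles into a trivial one, contradicting Lemma \ref{silla3tipo4}, so only \textbf{B5} remains. For \textbf{Case C} (one arc at both levels) the count leaves rows $2,3,4,9$: row $9$ puts the second and third saddles at a common level and yields a compressing disk, contradicting incompressibility of $F$; in row $3$, if the circle created by the second saddle is inessential the two middle saddles are interchangeable and the configuration reduces to \textbf{C4}, while if that circle is essential one is again forced into a forbidden type-$4$ saddle by Lemma \ref{silla3tipo4}; rows $2$ and $4$ survive as \textbf{C2} and \textbf{C4}, where the only slope restriction needed is that the two mutually parallel essential circles produced cannot both have $|p|\geq 2$ and $|q|\geq 2$, on pain of a spanning annulus. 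Collecting the survivors \textbf{B5}, \textbf{C2}, \textbf{C4} gives precisely the three rows of saddle sequences in the table; and in each of them $F_1'=\emptyset$ while $F_0'$ is empty or a single annulus, which is the asserted description of $F_0$ and $F_1$.

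The hard part is exactly the repeated verification, in the sub-cases of Case A and in the exceptional rows of Cases B and C, that the arrangement of level circles together with the slope constraint of Lemma \ref{lem1}(3) genuinely produces a spanning annulus (or, in row C9 and part of C3, a compressing disk or a non-orientable surface). This demands careful bookkeeping at each regular level of which arc lies in which annular region, of how each saddle splits or merges the components meeting that level, and of the fact that all newly created essential circles are mutually parallel and parallel to the boundary slopes of $F_0'$ and $F_1'$ — which is precisely the content of the case-by-case discussion that precedes the theorem, and from which the statement now follows by collecting the outcomes.
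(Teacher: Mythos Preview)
Your proposal is correct and follows essentially the same route as the paper: the theorem is precisely the summary of the Section~\ref{sec3} case analysis, and you have reproduced that analysis faithfully---the numerology from Lemma~\ref{lem1}, the bound from Lemma~\ref{lem6} via Lemma~\ref{lem4}, the reduction to Table~\ref{tabla2}, and then the elimination of Case~A via spanning annuli (including the gluing of two flow-annuli along a common level), the reduction of Case~B to B5, and of Case~C to C2 and C4. One small misattribution: the sub-case requiring the compressing-disk/non-orientable argument is the trivial-curve branch of \textbf{C2}, not of C3 (in C3 the inessential branch reduces to C4 and the essential branch dies by Lemma~\ref{silla3tipo4}, exactly as you say); you should also list A3 alongside A2, A4, A9 among the cases needing the glued-annulus trick.
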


\section{Classification of genus two non-satellite $(1,1)$-knots}
\label{sec4}

So far, we have understood the nature of the saddle points in the interior of  the genus two surface $F$, but still we need to give a description of the $(1,1)$-knots and the surfaces which they bound. In order to achieve such description, we perform a band sum of  two $(1,1)$-knots, which is described below.

As before, let $S$ be a standard torus in $S^3$ such that  $H_0$, $H_1$ are solid tori.

\begin{definition} (\textit{Vertical band})
A vertical band $b$ in $S\times [g,h]$, $0\leq g < h \leq 1$,  is and embedding  $b:I\times I \rightarrow S\times [g,h]$ such that for every $r \in  [g, h]$ there is a unique $t_r \in [0,1]$ such that
$b(I \times I) \cap S \times \{r\} = b(I \times \{ t_r\})$.

If $\beta_1$ and $\beta_2$ are essential simple closed curve in $S_g$ and $S_h$, respectively, and $b$ is a vertical band in $S\times [g,h]$ such that $b(I\times \{0\}) \subset \beta_1$ and $b(I\times \{1\}) \subset \beta_2$, we say that the knot $K= (\beta_1-b(I \times \{ 0\}))\cup b(\bd I \times I) \cup (\beta_2-b(I \times \{ 1\}))$ is obtained by adding a vertical band to the link $\beta_1\cup \beta_2$. Note that $K$ is a $(1,1)$-knot.
\end{definition} 

\begin{definition} (\textit{Vertical banding})
Let $K_1$ and $K_2$ be two $(1,1)$-knots such that the intersection  $\gamma_i^j= K_j \cap H_i$ is an arc for $i=0,1$ and $j=1,2$. We can embed $K_1$ in $S\times [0, \frac{1}{4}]$ and $K_2$ in $S \times [\frac{3}{4}, 1]$ in such a way that:
 \begin{itemize}
 \item $K_1 \cap S\times \{0\}= \gamma_0^1$
 \item $K_1 \cap S\times \{\frac{1}{4}\}= \gamma_1^1$
 \item $K_2 \cap S\times \{\frac{3}{4}\}= \gamma_0^2$
 \item $K_2 \cap S\times \{1\}= \gamma_1^2$
 \end{itemize} 
 
 Let $b: I \times I \rightarrow S \times [\frac{1}{4}, \frac{3}{4}]$ be a vertical band such that:
 \begin{enumerate}
 \item $b(I \times I) \cap \gamma_1^1= b(I \times \{ 0\})$,
 \item $b(I \times I) \cap \gamma_0^2= b(I \times \{ 1\})$, and
 \end{enumerate}

 \begin{figure}[ht]
\numberwithin{figure}{section}
\includegraphics[height=50mm]{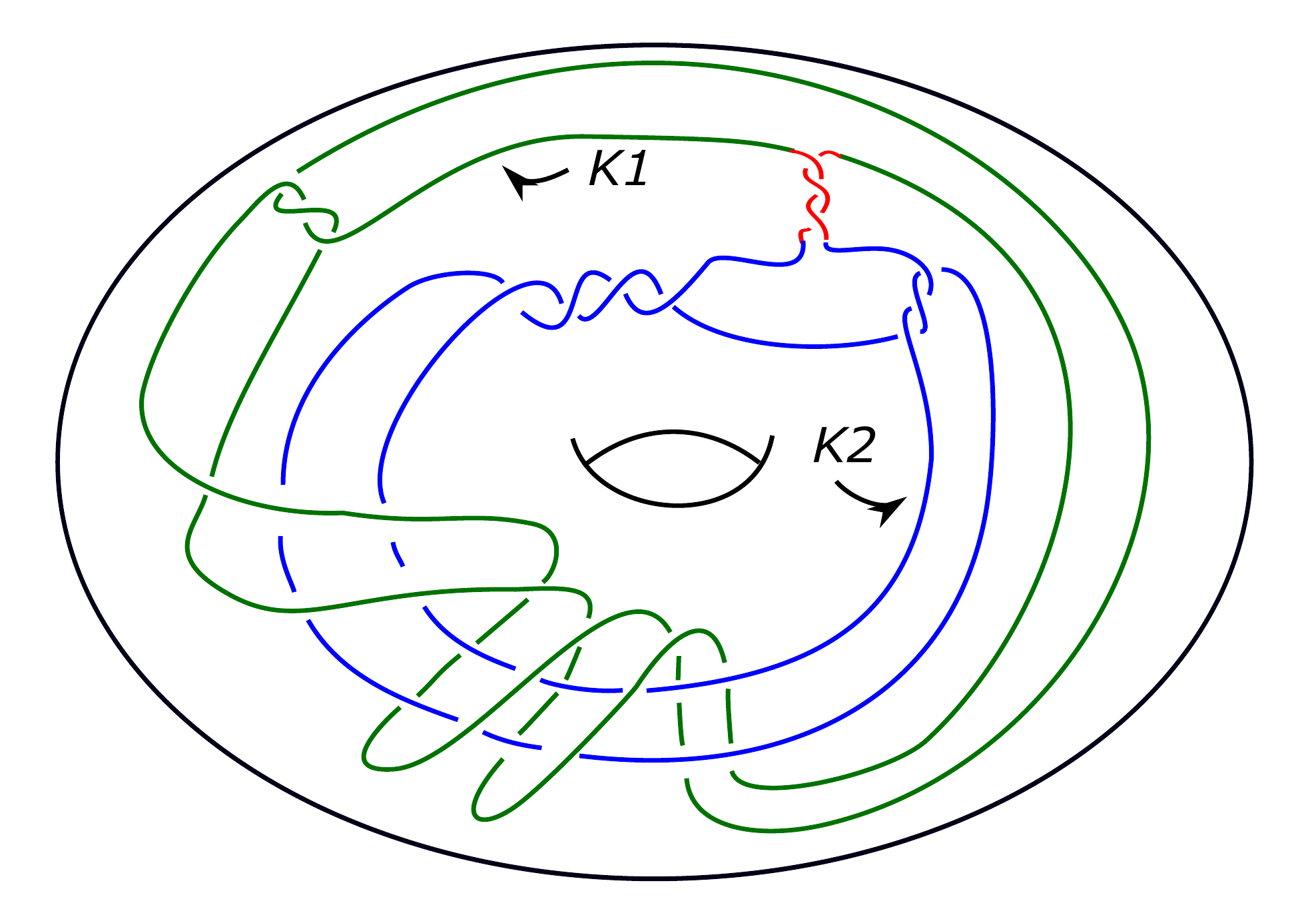}
\caption{Example of a vertical banding.}
\label{vb1}
\end{figure}

 Let $K= (K_1-b(I \times \{ 0\}))\cup b(\bd I \times I) \cup (K_2-b(I \times \{ 1\}))$. The knot $K$ will be called \textit{ vertical banding of $K_1$ and $K_2$}, and will be denoted by $K_1 \vee_b K_2$. Clearly, the knot $K_1 \vee_b K_2$ is a $(1,1)$-knot. See Figure \ref{vb1}.

 \end{definition}
 
Now we give a description of the genus 2-surface bounded by a $(1,1)$-knot $K$. We define 5 kind of pieces, which are the building blocks for the genus 2 surfaces.
\newpage
\textit{Pieces of type $P1$.}

Let $E$ be an annulus embedded in $H_0 \cup S\times [0,1/4]$, such that $E_0= E\cap H_0$ is an essential annulus in $H_0$ with slope $(1,q_0)$, $\vert q_0 \vert \geq 2$. Also, $E\cap S\times [0,1/4]$ consist of two essential vertical annuli $E_1$ and $E_2$. Let $E_3$ be a vertical essential annulus in $S\times [0,1/4]$, disjoint from $E$, such that $E_3$ lies in the parallelism region between $E$ and an annulus in $S_{1/4}$. Now let $b_1$ a vertical band in $S\times [0,1/4]$, with $b_1(I \times \{ 0\})$ contained in $E_3 \cap S_0$, and $b_1(I \times \{ 1\})$ contained in $E_i\cap S_{1/4}$, for some $i\in \{1,2,3\}$. For $j,k\not= i$, let $E_j'$ and $E_k'$ be extensions of $E_j$ and $E_k$ in $S\times [1/4,1/2]$. Let $b_2$ be a vertical band in $S\times [1/4,1/2]$, with $b_2(I \times \{ 0\})$ contained in $E_i \cap S_{1/4}$, and such that $b_1$ and $b_2$ are disjoint, and that $b_2$ is disjoint from $E_k'$ and $E_j'$. 
Let $G$ be the union of all these annuli and bands. Assume further that $G$ is orientable. If $i=3$, then the band $b_1$ must contain an even number of twists, for otherwise it would be non-orientable. Assume also that the band contains at least a twist, for otherwise the surface will be compressible. If $i=3$, it follows that $G$ is disconnected, it consists of a once punctured torus and an annulus. If $i=1,2$, it follows that $G$ is a pair of pants. Anyway, $G\cap S_{1/2}$ consists of two simple closed curves $\gamma_1$, $\gamma_2$ and an arc $\alpha_2$. If $i=3$, we say that $G$ is a piece of type $P1.1$. If $i=1,2$, and the band $b_1$ has an even number of twists (odd number of twists), we say that $G$ is a piece of type $P1.2o$ (resp. $P1.2n$). Note also that the bands could have many twists, producing many different surfaces for the same given annuli.
 
Let $A$ be an annulus embedded in $S_{1/2}$, such that $\partial A=\gamma_1\cup \gamma_2$, and such that $A$ is disjoint from $\alpha_2$. Let $\tilde G= G\cup A$. If $G$ is a piece of type $P1.1$ then $\tilde G$  has two components, a once punctured torus and a torus. If $G$ is a piece of type $P1.2o$ then $\tilde G$  is a once punctured torus, and if $G$ is a piece of type $P1.2n$ then $\tilde G$  is a once punctured Klein bottle. Let $K_1=\partial \tilde G$. Note that $K$ is a knot in a $(1,1)$-position, and that it bounds a once punctured torus or a once punctured Klein bottle. Note that there is a vertical essential annulus in $S\times [0,1/2]$ disjoint from $K$, this is just $E_1\cup E_1'$ or $E_2\cup E_2'$. Note however, that this is not an spanning annulus, for its slope is $(1,q_0)$. By pushing down the band $b_2$, we note that the knot $K_1$ is obtained by a banding of two curves, one in $S_0$ and the other in $S_{1/4}$, both of slope $(1,q_0)$. So, $K_1$ is obtained from a $(2,2q_0)$-torus link by adding a vertical band; it follows that $K_1$ is a 2-bridge knot of type $b(2l,2q_0)$
if $G$ is of type $P1.1$ or $P1.2o$, and a 2-bridge knot of type $b(2l+1,2q_0)$ if $G$ is a piece of type $P1.2n$.

\textit{Piece of type $P2$.}

Let $E_1$, $E_2$ be disjoint essential vertical annuli embedded in $S\times [0,1/2]$ and $[1/4,1/2]$,
respectively, where the slope $E_1\cap S_0$ is $(1,q_0)$, $\vert q_0 \vert \geq 2$. Now let $b_1$ a vertical band in $S\times [0,1/4]$, with $b_1(I \times \{ 0\})$ contained in $E_1 \cap S_0$, and $b_1(I \times \{ 1\})$ contained in $E_2\cap S_{1/4}$. Let $b_2$ be a vertical band in $S\times [1/4,1/2]$, with $b_2(I \times \{ 0\})$ contained in $E_2 \cap S_{1/4}$, and such that $b_1$ and $b_2$ are disjoint, and that $b_2$ is disjoint from $E_1$. Let $G$ be the union of all these annuli and bands.  Note that $G$ is a pair of pants, and that $G\cap S_{1/2}$ consists of two simple closed curves and an arc, which we denote by $\gamma_1$, $\gamma_2$ and $\alpha_2$. If the band $b_1$ has an even (odd) number of twists, we say that $G$ is a piece of type $P2o$ (resp. $P2n$). Note that there are different kinds of pieces of type $P2$, depending if the bands $b_1$ and $b_2$ lie in the same region between $E_1$ and $E_2$, or not. 

Let $A$ be an annulus embedded in $S_{1/2}$, such that $\partial A=\gamma_1\cup \gamma_2$, and such that $A$ is disjoint from $\alpha_2$. Let $\tilde G= G\cup A$. If $G$ is a piece of type $P2o$ then $\tilde G$  is a once punctured torus and a torus. If $G$ is a piece of type $P2n$ then $\tilde G$ is a once punctured Klein bottle. Let $K_1=\partial \tilde G$. Note that $K$ is a knot in a $(1,1)$-position, and that it bounds a once punctured torus or a once punctured Klein bottle. Note that there is a vertical essential annulus in $S\times [0,1/2]$ disjoint from $K$, this is just a parallel copy of $E_1$. Note however, that this is not an spanning annulus, for its slope is $(1,q_0)$.  By pushing down the band $b_2$, we note that the knot $K_1$ is obtained by a banding of two curves, one in $S_0$ and the other in $S_{1/2}$, both of slope $(1,q_0)$. So, $K_1$ is obtained from a $(2,2q_0)$-torus link by adding a vertical band; if follows that $K_1$ is a 2-bridge knot of type $b(2l,2q_0)$
if $G$ is of type $P2o$, and a 2-bridge knot of type $b(2l+1,2q_0)$ if $G$ is a piece of type $P2n$.

\textit{Piece of type $P2'$.}

It is the same as a piece of type $P2$, but inverted and contained in $S\times [1/2,1]$. So, $G\cap S_{1}$ consists of a simple closed curve of slope $(p_1,1)$, $\vert p_1 \vert \geq 2$. Also,
 $G\cap S_{1/2}$ consists of two simple closed curves and an arc. As in the previous case, we say that $G$ is a piece of type $P2o'$ or $2n'$.

\textit{Piece of type $P3$.}

Let $E_1$ be an essential vertical annulus embedded in $S\times [0,1/4]$, where the slope $E_1\cap S_0$ is $(p_0,q_0)$, $\vert q_0 \vert \geq 1$. Now let $b_1$ be a vertical band in $S\times [0,1/4]$, with $b_1(I \times \{ 0\})$ contained in $E_1 \cap S_0$, and $b_1(I \times \{ 1\})$ contained in $E_1\cap S_{1/4}$. Let $b_2$ be a vertical band in $S\times [1/4,1/2]$, with $b_2(I \times \{ 0\})$ contained in $E_1\cap S_{1/4}$, and such that $b_1$ and $b_2$ are disjoint. Let $G$ be the union of the annulus and the bands.  Assume further that $G$ is orientable; then the band $b_1$ must have an even number of twists. Note that $G$ is a once punctured torus, and that $G\cap S_{1/2}$ consists of an arc.
We say that $G$ is a piece of type $P3$. Let $\beta=b_1(\{1/2\}\times I)$ be the core of the band $b_1$. This is a monotonous arc in $S\times [0,1/4]$ intersecting $E_1$ in its endpoints. There are two possibilities for the arc $\beta$, either it is isotopic in $S\times [0,1/4]$ to an arc lying in $E_1$, or it is not. This produces two different kinds of pieces of type $P3$.

Let $K_1=\partial G$. Note that $K$ is a knot in a $(1,1)$-position, and that it bounds a once punctured torus. Note that if the arc $\beta$ is isotopic in $S\times [0,1/4]$ to an arc lying in $E_1$, then there is a vertical essential annulus in $S\times [0,1/2]$ disjoint from $G$ and then disjoint from $K_1$; this is just a parallel copy of $E_1$. But note also that if the arc $\beta$ is not isotopic in $S\times [0,1/2]$ to an arc lying in $E_1$, then there is a vertical essential annulus in $S\times [0,1/2]$ disjoint from $K_1$, but which intersects $G$ in a simple closed curve. In any case, $K_1$ is obtained by a $(2p_0,2q_0)$-torus link by adding a vertical band. If $\vert p_0 \vert \geq 2$, $\vert q_0 \vert \geq 2$, then $K_1$ is a genus one satellite $(1,1)$-knot. If $p_0=1$, then $K_1$ is a 2-bridge knot of type $b(2l,2q_0)$. If $q_0=1$, then $K_1$ is a 2-bridge knot of type $b(2l,2p_0)$. But if $p_0=0$, $q_0=1$, then in fact $K_1$ is a trivial knot, but it bounds a genus one surface which is incompressible in $H_0\cup S\times[0,1/2]$.

\textit{Piece of type $P3'$.}

It is the same as a pice of type $P3'$, but inverted and contained in $S\times [1/2,1]$. So, $G\cap S_{1}$ consists of a simple closed curve of slope $(p_1,q_1)$, $\vert p_1 \vert \geq 1$. Also,
 $G\cap S_{1/2}$ consists of an arc. We say that $G$ is a piece of type $P3'$.

\begin{theorem}
\label{piecestheorem}
 Let $K$ be a genus two $(1,1)$-knot. Suppose $K$ is neither a torus knot nor a satellite knot. Let $F$ be a genus two Seifert surface for $K$ which satisfies $(M1)-(M4)$ with minimal complexity. Then $F$ can be isotoped and decomposed into pieces, and satisfy one the following cases:
\begin{enumerate}
\item $F$ is the union of a piece of type $P1$ and a piece of type $P2'$.
\item $F$ is the union of a piece of type $P2$ and a piece of type $P2'$. 
\item $F$ is the union of a piece of type $P3$ and a piece of type $P3'$. 
\end{enumerate}
\end{theorem}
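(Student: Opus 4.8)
I would prove the theorem by examining the three configurations that survive Theorem \ref{saddletheorem} — cases B5, C2 and C4 — and in each rebuilding $F$ explicitly from its level sets, then recognising $F$, once cut along the middle level surface $S_{1/2}$, as the stated combination of pieces. Throughout, $K$ is genus two, not a torus knot and not a satellite, so by Lemma \ref{lem4} there is no spanning annulus for $K$ in $S\times I$, and by Theorem \ref{saddletheorem} (together with Lemma \ref{lem6}) $h|\tilde F$ has exactly four saddles, $F_1$ is a single cancelling disk, and $F_0$ is a cancelling disk together with one annulus (case B5) or nothing else (cases C2, C4). The first move is to normalise the Morse position: isotope $F$ so that the four saddles lie at levels in $(0,1/4)$, $(1/4,1/2)$, $(1/2,3/4)$, $(3/4,1)$, with $F\cap S_r$ swept by vertical product annuli between consecutive saddle levels and each saddle realised by a vertical band attached to these annuli. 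Here one uses Lemma \ref{lem1}(2) (every circle component of $F\cap S_r$ is essential and not $\partial$-parallel, which prevents any wandering or recombination of curves off the saddle levels), Lemma \ref{lem2} (the first saddle splits the cancelling arc, the last one re-joins it), and the minimality of $c(F)$. After this, the isotopy class of $F$ is encoded by which vertical annuli it contains, their boundary slopes, which band realises which saddle, and how many times each band twists — precisely the data in the definitions of the pieces.

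In case B5, $F_0'$ is an annulus $E_0\subset H_0$; by Lemma \ref{lem1}(3) it has slope $(p_0,q_0)$ with $|q_0|\ge 2$, and a vertical annulus parallel to $E_0$ disjoint from $K$ enlarges to an essential annulus in $S\times I$, so Lemma \ref{lem4} forces $|p_0|=1$, i.e. slope $(1,q_0)$. Reading the level sets from the bottom: at level $0$ one sees the two boundary curves of $E_0$ and the cancelling arc; the first (type 1) saddle is a vertical band splitting the arc off a curve parallel to these; the three type 2 saddles then re-join the arc with the remaining curves, all of slope $(1,q_0)$. Identifying the three vertical annuli of slope $(1,q_0)$, the band starting on $S_0$ and the band starting on $S_{1/4}$, the part of $F$ in $H_0\cup S\times[0,1/2]$ is by definition a piece of type $P1$, and the part in $S\times[1/2,1]$ — analysed symmetrically from level $1$, where $F_1$ is a cancelling disk and the mirror computation gives slope $(p_1,1)$ with $|p_1|\ge 2$ — is a piece of type $P2'$. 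This is conclusion (1).

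Cases C2 and C4 have $F_0'=F_1'=\emptyset$, so every curve in every level set is born and dies at a saddle. In C2 the sequence type 1, type 1, type 2, type 2 sweeps an arc up from level $0$, bands it to a curve and then to a parallel curve, so $F\cap S_{1/2}$ is two parallel curves and an arc; the spanning-annulus obstruction via Lemma \ref{lem4} restricts the slope to the form permitted in the definition, and the part below $S_{1/2}$ is a piece of type $P2$ while the part above is a piece of type $P2'$ — conclusion (2). In C4 the sequence type 1, type 2, type 1, type 2 sweeps an arc up, bands it to a curve and immediately bands it back to an arc — that is, one vertical annulus with two bands attached — so $F\cap S_{1/2}$ is a single arc; here no vertical annulus disjoint from $K$ of the restricted slope need appear, so $(p_0,q_0)$ is unconstrained and $K_1$ may even be a genus one satellite $(1,1)$-knot, the part below $S_{1/2}$ is a piece of type $P3$ and the part above a piece of type $P3'$ — conclusion (3). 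In each case orientability of $F$ fixes the parity of the twists of any band running from an annulus back to itself, and minimality of $c(F)$ rules out an untwisted band, which is exactly the refinement the piece definitions record ($P1.1$ versus $P1.2o$, $P1.2n$; $P2o$ versus $P2n$; and the two flavours of $P3$).

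The only real work is the normalisation and the ensuing bookkeeping: one must verify that after the isotopy $F$ genuinely decomposes as vertical annuli plus vertical bands with no hidden critical behaviour, and that cutting along $S_{1/2}$ separates $F$ cleanly into the two halves described in the definitions of the pieces, with matching boundary slopes, band attachments and twist parities. All the structural ingredients — exactly four saddles of the prescribed types, no spanning annulus, essential non-$\partial$-parallel circle components, the behaviour of the extreme saddles — are already available from Lemmas \ref{lem1}, \ref{lem2}, \ref{lem4}, \ref{lem6} and Theorem \ref{saddletheorem}, so this is a careful case verification rather than a new difficulty, but it is the heart of the argument and the step most prone to an omitted subcase.
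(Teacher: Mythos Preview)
Your approach is essentially the same as the paper's: reduce to the three surviving configurations B5, C2, C4 via Theorem \ref{saddletheorem}, cut $F$ at a regular level $S_{1/2}$ between the second and third saddles, and identify the two halves $G_1=F_0\cup(F\cap S\times[0,1/2-\epsilon])$ and $G_2=F_1\cup(F\cap S\times[1/2+\epsilon,1])$ with the piece types by tracking how the saddles turn cancelling disks into annuli and pairs of pants. The paper carries out case B5 in two explicit subcases (second saddle joins $\alpha_1$ to $d$, giving $G_1$ of type $P1.1$; or to $b$, giving $G_1$ of type $P1.2o$ or $P1.2n$), which you only allude to in your final paragraph, but otherwise the arguments coincide.
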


\begin{proof}
Let $K$ be a genus two $(1,1)$-knot and $F$ a Seifert surface as in Theorem \ref{saddletheorem}. Let $S_r $ be a regular level above the second saddle and below the third saddle. We can suppose, without loss of generality, that $r=1/2$. For a small $\epsilon$, the regular levels $S_{r-\epsilon}$ and $S_{r+\epsilon}$, lie below and above $S_r$, respectively. 

Following Theorem \ref{saddletheorem}, suppose first that $F$ satisfy case $B5$. Let $b$, $c$, $d$, $\alpha_1$ and $\alpha_2$ be the simple closed curves and arcs that appear in the proof of case B5 of Theorem \ref{saddletheorem}. There are two possibilities for this case.  First, on $S_r$ the curves $b$ and $c$ bound an annulus $A$ such that $\alpha_2 \notin A$, this arises when the second saddle joins the arc $\alpha_1$ with the curve $d$.  

On the levels $S_{r-\epsilon}$ and $S_{r+\epsilon}$ there are copies of $\alpha_2$, $b$, $c$ and $A$; for the sake of notation we just keep calling them the same. Consider the surfaces

\begin{center}
$G_1= F_0 \cup (F \cap S\times [0, r-\epsilon])$ \\
$G_2= F_1 \cup (F \cap S\times [r+\epsilon, 1])$
\end{center}

Note that after the first saddle, the canceling disk transforms into an annulus, and the second saddle adds a vertical band to this annulus. So $G_1$ is a disconnected surface, it consists of a once punctured torus and an annulus. By reading backwards the saddle points, after the fourth saddle, the canceling disk in $F_1$ is transformed into an annulus, and after the third saddle, this annulus is transformed into a pair of pants, so $G_2$ is a pair of pants. It follows that $G_1$ is a piece of type $P1.1$ and $G_2$ is a piece of type $P2o'$. It cannot be a piece of type $P2n'$, for if this happens, the surface $F$ would not be orientable. Consider the following knots: 

\begin{center}
$K_1= \alpha_2 \cup (K\cap S\times [0, r-\epsilon])$\\

$K_2= \alpha_2 \cup (K\cap S\times [r+\epsilon, 1])$
\end{center}

Both are  $(1,1)$-knots. For a schematic picture, see Figure \ref{vbB5-1}. By the description of the pieces of type $P1$ and $P2'$, it follows that $K= K_1 \vee_b K_2$, where $K_1$ and $K_2$ are genus one 2-bridge knot. Namely, $K_1$ is a 2-bridge knot of type $b(2l,2q_0)$, and $K_2$ is a 2-bridge knot of type $b(2m,2q_0)$, for certain integers $l,m$.

The other possibility for case $B_5$, is that the second saddle joins the arc $\alpha_1$ with the curve $b$.   On $S_r$ the curves $d$ and $c$ bound an annulus $A$ such that $\alpha_2 \notin A$. Again, consider the surfaces

\begin{center}
$G_1= F_0 \cup (F \cap S\times [0, r-\epsilon])$ \\
$G_2= F_1 \cup (F \cap S\times [r+\epsilon, 1])$
\end{center}

Note that after the first saddle, the canceling disk transforms into an annulus, and the second saddle connects, via a vertical band, this annulus to the annulus coming from $F_0$. So $G_1$ is a pair of pants.
By an argument as in the previous case, $G_2$ is also a pair of pants.  It follows that $G_1$ is a piece of type $P1.2o$  ($P1.2n$) and that $G_2$ is a piece of type $P2o'$ (resp. $P2n'$). Again we can consider the following knots: 

\begin{center}
$K_1= \alpha_2 \cup (K\cap S\times [0, r-\epsilon])$\\

$K_2= \alpha_2 \cup (K\cap S\times [r+\epsilon, 1])$
\end{center}

Both are  $(1,1)$-knots, and it follows that $K= K_1 \vee_b K_2$. See Figure \ref{vbB5-2}. If $G_1$ is a piece of type $P1.2o$ and $G_2$ is a piece of type $P2o'$, then $K_1$ and $K_2$ are genus one 2-bridge knots. Namely, $K_1$ is a 2-bridge knot of type $b(2l,2q_0)$, and $K_2$ is a 2-bridge knot of type $b(2m,2q_0)$, for certain integers $l,m$. If $G_1$ is a piece of type $P1.2n$ and $G_2$ is a piece of type $P2n'$, then $K_1$ and $K_2$ are cross cap number two 2-bridge knots. Namely, $K_1$ is a 2-bridge knot of type $b(2l+1,2q_0)$, and $K_2$ is a 2-bridge knot of type $b(2m+1,2q_0)$, for certain integers $l,m$.

\begin{figure}[h]
\numberwithin{figure}{section}
\includegraphics[height=80mm]{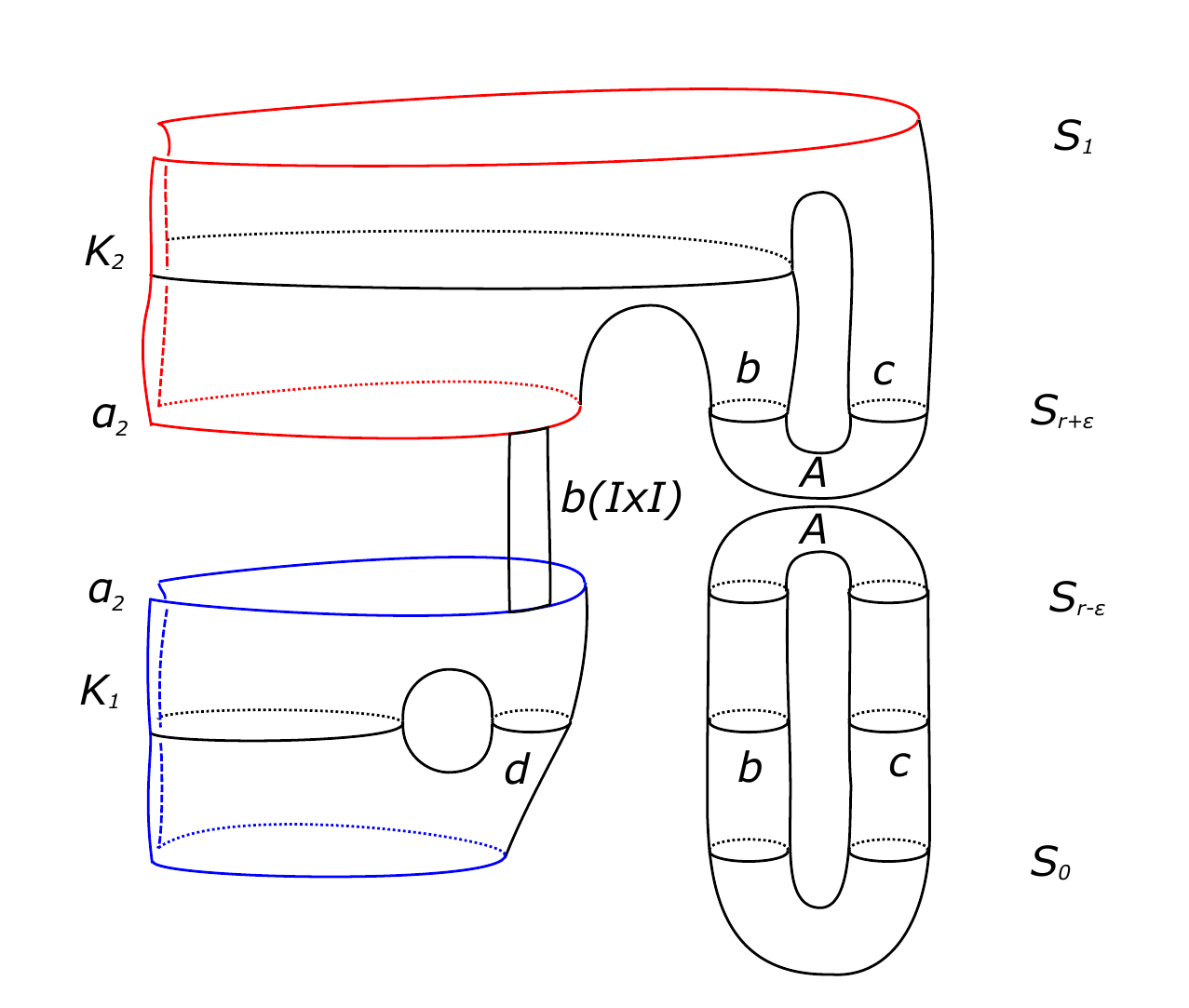}
\caption{Vertical banding for first case B5.}
\label{vbB5-1}
\end{figure}

\begin{figure}[h]
\numberwithin{figure}{section}
\includegraphics[height=80mm]{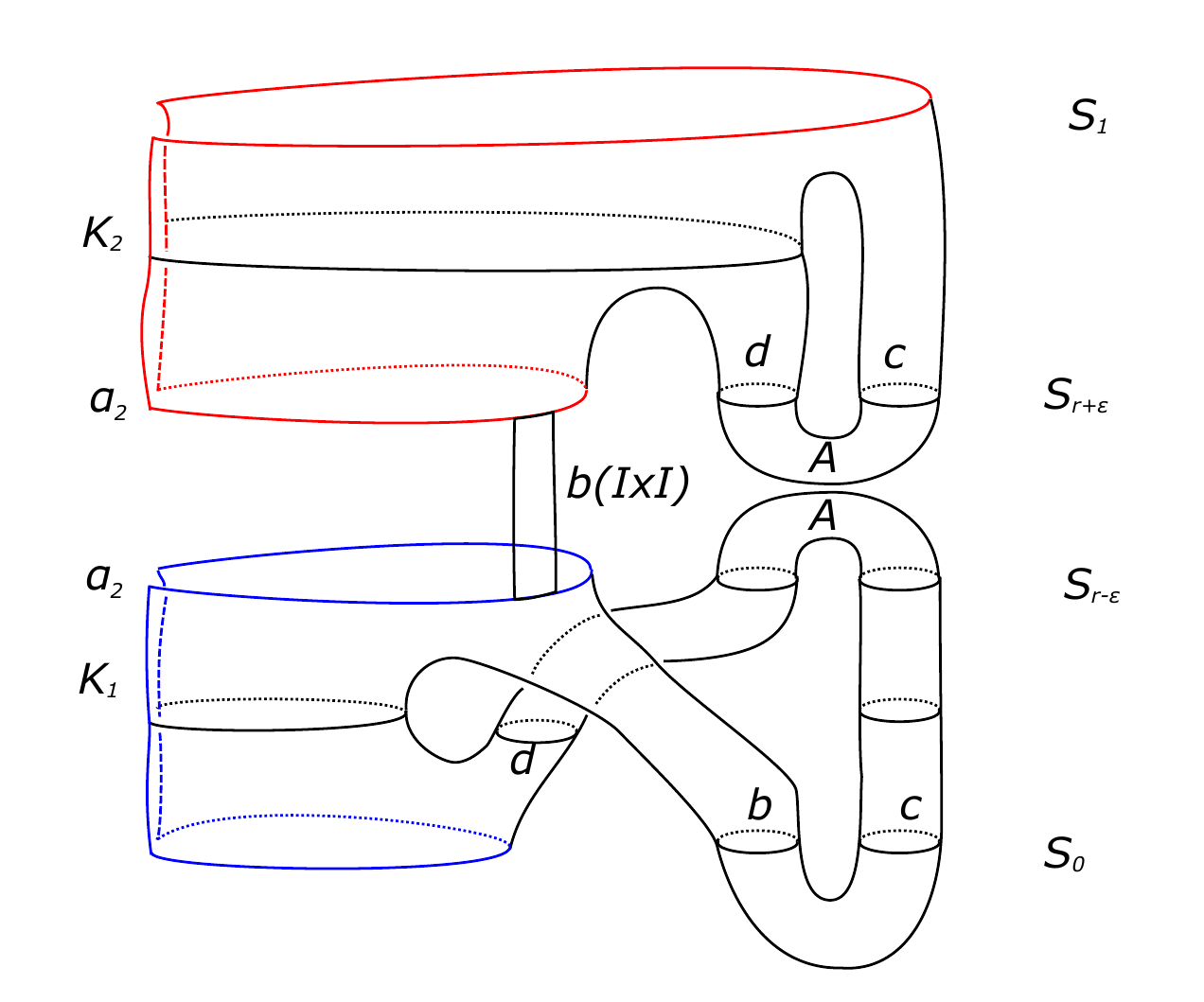}
\caption{Vertical banding for second case B5.}
\label{vbB5-2}
\end{figure}

If $F$ is as in case $C2$, then on $S_r$ the curves $b$ and $c$ bound an annulus $A$ such that $\alpha_2 \notin A$. On the levels $S_{r-\epsilon}$ and $S_{r+\epsilon}$ there are copies of $\alpha_2$, $b$, $d$ and $A$, for the sake of notation we just keep calling them the same. Consider the surfaces:

\begin{center}

$G_1= F_0 \cup (F \cap S\times [0, r-\epsilon])$ \\
$G_2= F_1 \cup (F \cap S\times [r+\epsilon, 1])$
\end{center}

After the first saddle, the canceling disk in $F_1$ is transformed into an annulus, and after the second saddle, this annulus is transformed into a pair of pants. The same argument show that $G_2$ is also a pair of pants. It follows that $G_1$ is a piece of type $P2o$ or $P2n$, and $G_2$ is a piece of type $P2o'$ or $P2n'$, respectively. 

Consider the following knots: 
\begin{center}
$K_1= \alpha_2 \cup (K\cap S\times [0, r-\epsilon])$\\

$K_2= \alpha_2 \cup (K\cap S\times  [r+\epsilon, 1])$
\end{center}

Both are  $(1,1)$-knots. It follows that $K= K_1 \vee_b K_2$. A schematic picture is shown in Figure \ref{vbC2}. If $G_1$ is a piece of type $P2o$ and $G_2$ is a piece of type $P2o'$, then $K_1$ and $K_2$ are genus one 2-bridge knot. Namely, $K_1$ is a 2-bridge knot of type $b(2l,2q_0)$, and $K_2$ is a 2-bridge knot of type $b(2m,2q_0)$, for certain integers $l,m$. If $G_1$ is a piece of type $P2n$ and $G_2$ is a piece of type $P2n'$, then $K_1$ and $K_2$ are cross cap tunnel number two 2-bridge knots. Namely, $K_1$ is a 2-bridge knot of type $b(2l+1,2q_0)$, and $K_2$ is a 2-bridge knot of type $b(2m+1,2q_0)$, for certain integers $l,m$.

\begin{figure}[h]
\numberwithin{figure}{section}
\includegraphics[height=80mm]{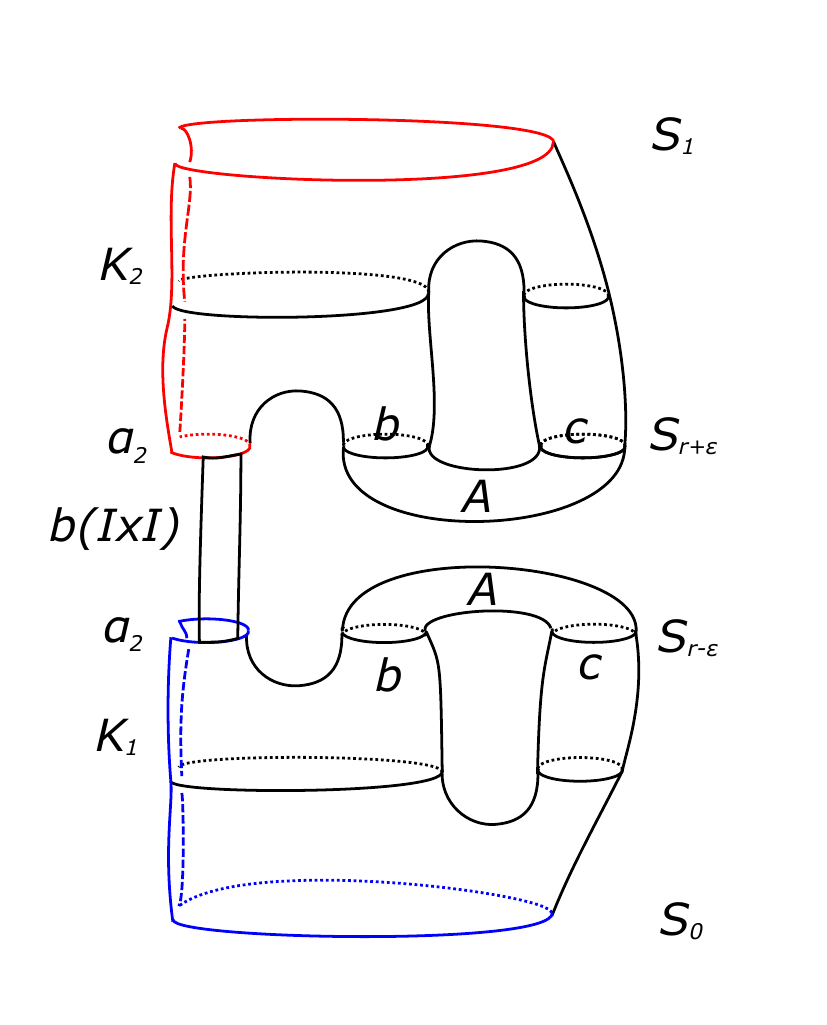}
\caption{Vertical banding for case C2.}
\label{vbC2}
\end{figure}

\begin{figure}[h]
\numberwithin{figure}{section}
\includegraphics[height=80mm]{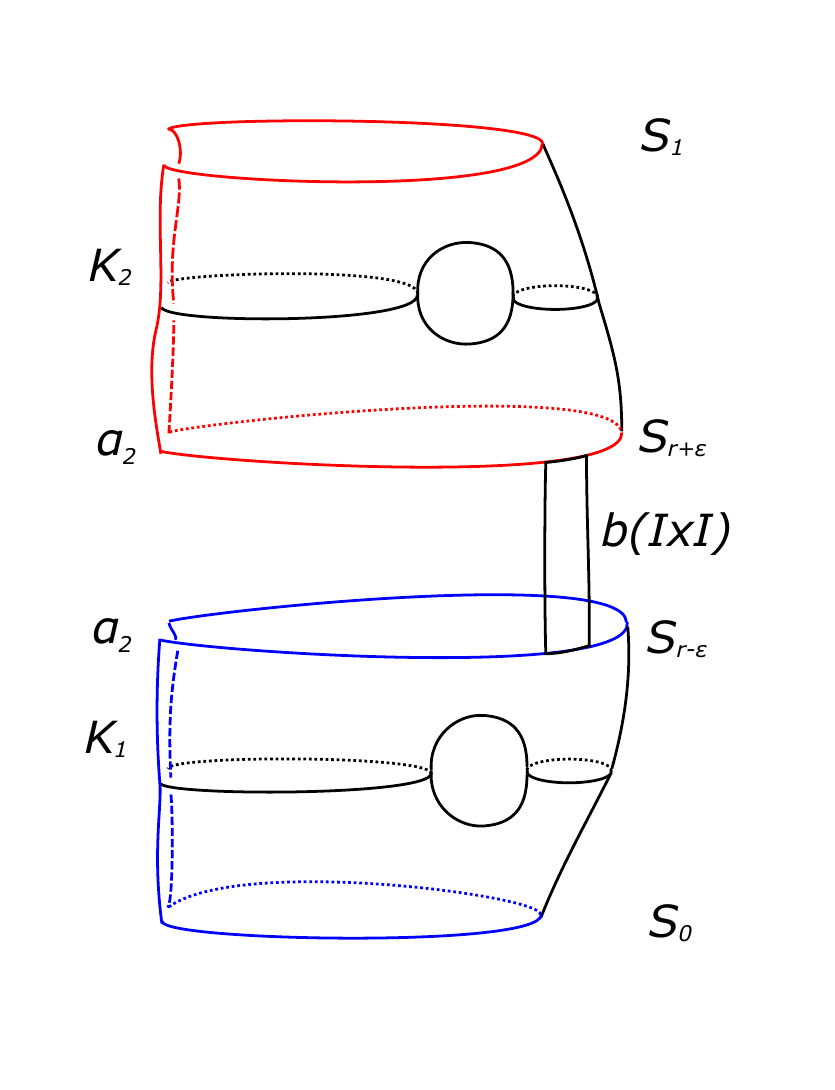}
\caption{Vertical banding for case C4.}
\label{vbC4}
\end{figure}

If $F$ satisfies case $C4$, then  $S_r \cap F= \alpha_2$.  On the levels $S_{r-\epsilon}$ and $S_{r+\epsilon}$ we see copies of $\alpha_2$, which we denote by $\alpha_2$. Consider the surfaces:

\begin{center}
$G_1= F_0 \cup (F \cap S\times [0, r-\epsilon])$ \\
$G_2= F_1 \cup (F \cap S\times [r+\epsilon, 1])$
\end{center}

Note that after the first saddle, the canceling disk transforms into an annulus, and the second saddle adds a vertical band to this annulus. So $G_1$ is a once punctured torus. The same argument, by reading backwards the saddle points, shows that $G_2$ is also a once punctured torus. Then $G_1$ is a piece of type $P3$ and $G_2$ is a piece of type $P3'$.
 
Consider the knots:
\begin{center}
$K_1= \alpha_2 \cup (K\cap S\times [0, r-\epsilon])$\\
$K_2= \alpha_2 \cup (K\cap S\times [r+\epsilon, 1])$
\end{center}

Both are  $(1,1)$-knots. $K_i$ is a $(1,1)$-knot of genus one, since it bounds $G_i$, for $i=1,2$. The knot $K$ is the vertical banding of $K_1$ and $K_2$. See Figure \ref{vbC4}.

Let $(p_1, q_1)$ be the slope of $b$ and let $(p_2, q_2)$ be the slope of $c$. If $\vert p_i \vert$, $\vert q_i \vert \geq 2$, for $i=1,2$,  then $(p_1, q_1)\neq (p_2, q_2)$, otherwise we can obtain a spanning annulus for  $K$. Therefore $K_1$ and $K_2$ can be satellites of different types. If for some $i=1,2$, say $i=1$,  $\vert p_1 \vert =1$, then $K_1$ is a genus one 2-bridge knot. For $(p_2, q_2)$ it can happen that $\vert p_2 \vert$, $\vert q_2 \vert \geq 2$, thus $K_2$ is a satellite, otherwise  $K_2$ is a genus one 2-bridge knot. 

Note that one or both of $K_1$ and $K_2$ can be trivial knots. This happens when $(p_0,q_0)=(0,1)$, or $(p_1,q_1)=(1,0)$.
\end{proof}

The previous theorem has the following consequence.

\begin{theorem}
\label{nonsatcase}
Let $K$ be a genus two $(1,1)$-knot satisfying the conditions of Theorem \ref{piecestheorem}. Then $K$ is a vertical banding, $K_1 \vee_b K$, where $K_1$ and $K_2$ are one the following cases:
\begin{itemize}
\item Both $K_1$ and $K_2$ are 2-bridge $(1,1)$-knots of  genus one; or
\item Both $K_1$ and $K_2$ are satellite $(1,1)$-knots of  genus one; or
\item $K_1$ is a satellite $(1,1)$-knot of  genus one and $K_2$ is a 2-bridge $(1,1)$-knot of  genus one; or
\item Both $K_1$ an $K_2$ are cross cap number two 2-bridge knots; or
\item $K_1$ is a trivial knot in a certain position, and $K_2$ is a genus one 2-bridge knot, or a satellite genus one $(1,1)$-knot, or a trivial knot in certain position.
\end{itemize}

\end{theorem}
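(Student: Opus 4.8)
The plan is to deduce this almost directly from Theorem \ref{piecestheorem}: all the geometric work of isotoping $F$ and decomposing it into pieces has already been carried out there, so what remains is the bookkeeping of reading off, in each of the three decomposition cases, which genus one $(1,1)$-knots occur as the two summands $K_1$ and $K_2$ of the vertical banding $K = K_1 \vee_b K_2$. Thus I would begin by invoking Theorem \ref{piecestheorem} to write $F = G_1 \cup G_2$ with $K = K_1 \vee_b K_2$ and $K_i = \bd \tilde G_i$ a genus one $(1,1)$-knot, and then split into the three cases ($P1 \cup P2'$, $P2 \cup P2'$, $P3 \cup P3'$) provided there.

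First I would treat the decomposition with $G_1$ of type $P3$ and $G_2$ of type $P3'$. By the description of a piece of type $P3$, the summand $K_i$ is obtained from a $(2p_i, 2q_i)$-torus link by adding a vertical band, and hence is a genus one satellite $(1,1)$-knot when $|p_i|, |q_i| \geq 2$, a genus one 2-bridge knot when $|p_i| = 1$ or $|q_i| = 1$, and the trivial knot (in the special position bounding an incompressible genus one surface in $H_0 \cup S \times [0,1/2]$) when $(p_i, q_i) = (0,1)$ or $(p_i, q_i) = (1,0)$. To rule out two satellite summands \emph{of the same type} I would invoke Lemma \ref{lem4}: if the slopes of the two vertical annuli involved agreed and both had coordinates of absolute value $\geq 2$, then gluing the portion of the annulus below the middle level to the portion above it would produce a spanning annulus for $K$, contradicting the hypothesis that $K$ is not a satellite. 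Enumerating the remaining possibilities for the pair of slopes then yields the second, third, and fifth alternatives of the statement.

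Next I would treat the decompositions with $(G_1, G_2)$ of type $(P1, P2')$ or $(P2, P2')$. From the descriptions of pieces of type $P1$, $P2$ and $P2'$, each summand $K_i$ is a 2-bridge knot obtained from a $(2, 2q_0)$-torus link by adding a vertical band: it is a genus one 2-bridge knot $b(2l, 2q_0)$ when the distinguished band has an even number of twists (piece types $P1.1$, $P1.2o$, $P2o$, $P2o'$), and a cross cap number two 2-bridge knot $b(2l+1, 2q_0)$ when that band has an odd number of twists (piece types $P1.2n$, $P2n$, $P2n'$). The key point is that orientability of $F$ forces the two glued pieces to have matching parity: an ``$n$'' piece cannot be glued to an ``$o$'' piece without making $F$ nonorientable. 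Hence either both $K_1$ and $K_2$ are genus one 2-bridge knots, or both are cross cap number two 2-bridge knots, which gives the first and fourth alternatives. Collecting the outcomes of the three decomposition cases then produces exactly the five listed families.

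The main obstacle, such as it is given that Theorem \ref{piecestheorem} does the heavy lifting, is the orientability and parity bookkeeping: one must verify at the gluing annulus $A \subset S_{1/2}$ that the twisting parities of the bands in $G_1$ and $G_2$ are constrained to agree, and that the enumeration of slope pairs in the $P3 \cup P3'$ case is exhaustive and compatible with the spanning-annulus obstruction. Both of these were essentially settled inside the proof of Theorem \ref{piecestheorem}, so the present proof is mostly a matter of organizing that information cleanly.
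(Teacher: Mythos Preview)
Your proposal is correct and matches the paper's approach exactly: the paper gives no separate proof of Theorem~\ref{nonsatcase} at all, stating only that ``the previous theorem has the following consequence,'' and the case-by-case identification of $K_1$ and $K_2$ (including the orientability/parity matching and the spanning-annulus obstruction for equal slopes in the $P3\cup P3'$ case) is already carried out inside the proof of Theorem~\ref{piecestheorem}. One small organizational remark: in the $P3\cup P3'$ case your enumeration of slope pairs also produces the first alternative (both $K_i$ genus one 2-bridge) when $|p_i|=1$ or $|q_i|=1$ for $i=1,2$, so that alternative arises there as well as in the $P1\cup P2'$ and $P2\cup P2'$ cases.
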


\begin{ejem}
Let us consider a vertical banding $K$ of two 2-bridge knots as shown in Figure \ref{2bvb:sfig1}, together with a Seifert surfaces satisfying $(M1)-(M4)$. We construct an example satisfying case (3) of Theorem \ref{piecestheorem}. Take a piece of Type $P3$ in $S\times[0,1/2]$, consisting of a vertical annulus of slope $(q,1)$, and a vertical band with $2l$ twists, plus a band with no twists. Take also a piece of type $P3'$ in $S\times[1/2,1]$, consisting of a vertical annulus of slope $(p,1)$, and a vertical band with $2k$ twists, plus a band with no twists. Let us call such a vertical banding an untwisted vertical banding. In Figure  \ref{2bvb:sfig1} we are showing the case $q=2$, $l=1$, $p=2$, $k=2$. Isotope $K$, by sliding the Seifert surface  as in Figures \ref{2bvb:sfig2}, \ref{2bvb:sfig3}, \ref{2bvb:sfig4},  the knot $K$ looks like the one in Figure \ref{2bvb:sfig5}. Observe that in the general case, $K$ is isotopic to the top of Figure \ref{2bvb8}. It is not difficult to see that $K$ is isotopic to the knot shown in the bottom  of Figure \ref{2bvb8}, which clearly it is a 2-bridge knot. It is well known that any 2-bridge knot of genus 2 can be expressed as in the bottom of Figure \ref{2bvb8}. This shows that any 2-bridge knot of genus two can be expressed as in case (3) of Theorem \ref{piecestheorem}.
\end{ejem}

\begin{figure}
\begin{subfigure}{.5\textwidth}
  \centering
  \includegraphics[width=.8\linewidth]{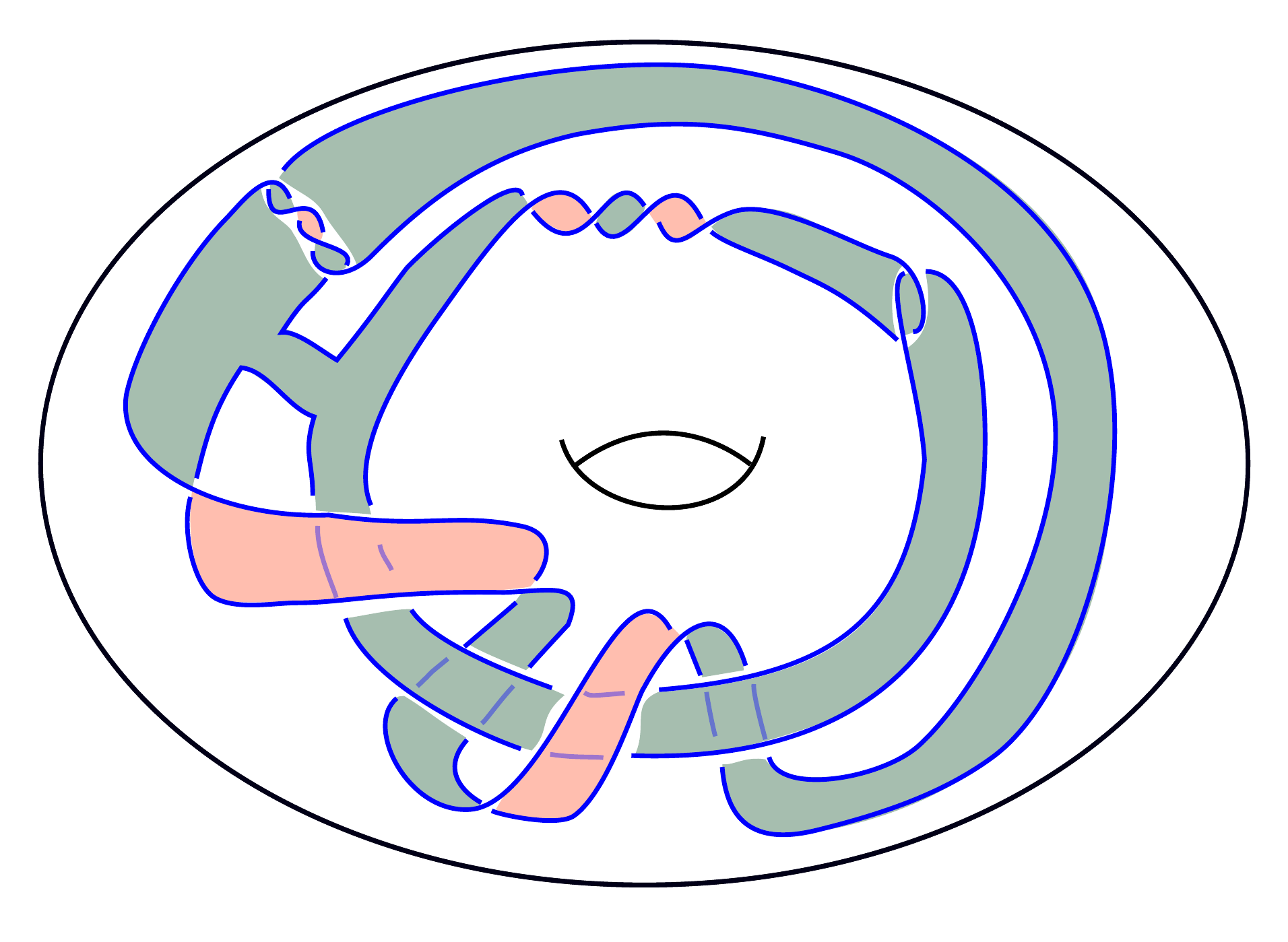}
  \caption{}
  \label{2bvb:sfig1}
\end{subfigure}%
\begin{subfigure}{.5\textwidth}
  \centering
  \includegraphics[width=.8\linewidth]{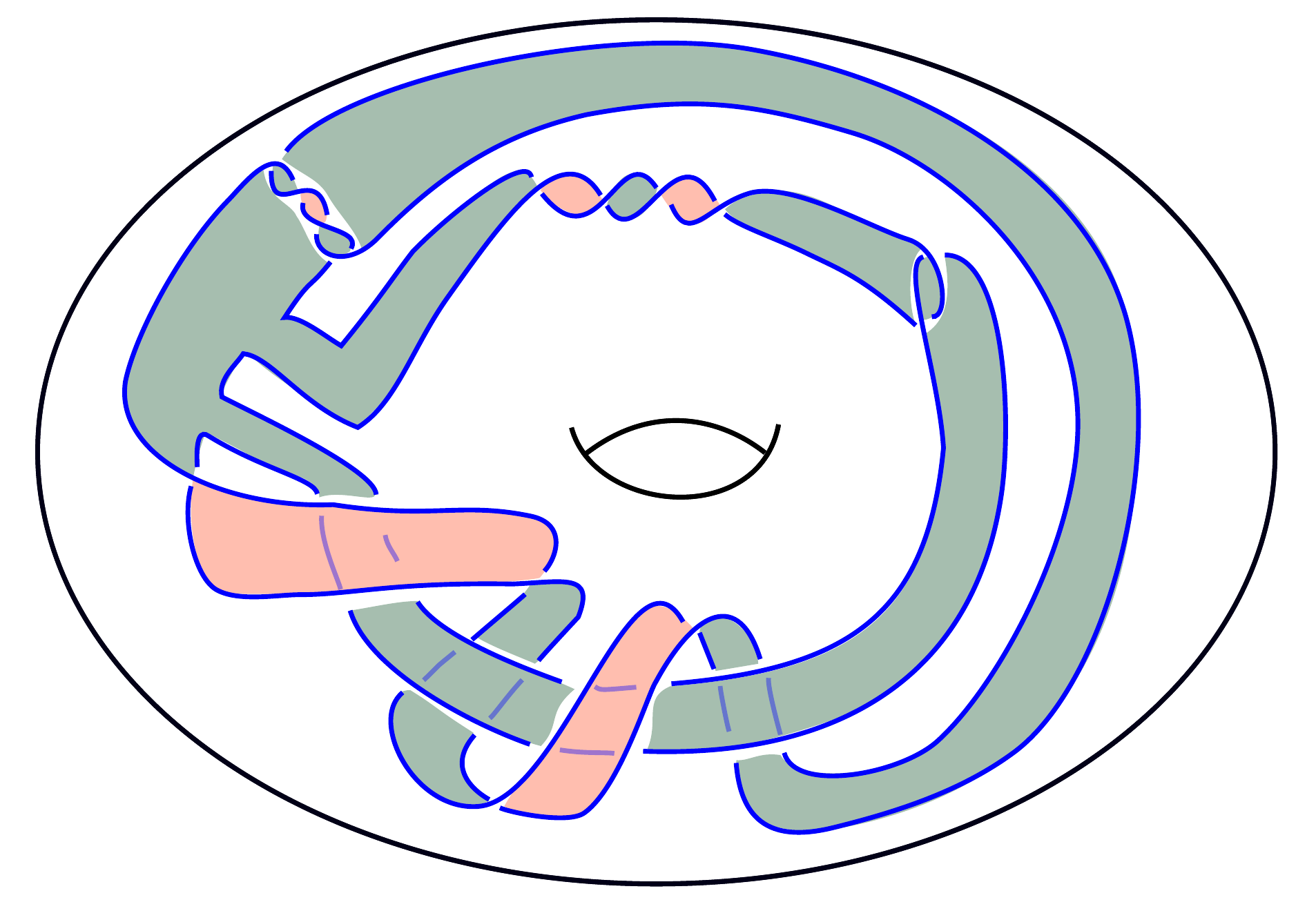}
  \caption{}
  \label{2bvb:sfig2}
\end{subfigure}
\begin{subfigure}{.5\textwidth}
  \centering
  \includegraphics[width=.8\linewidth]{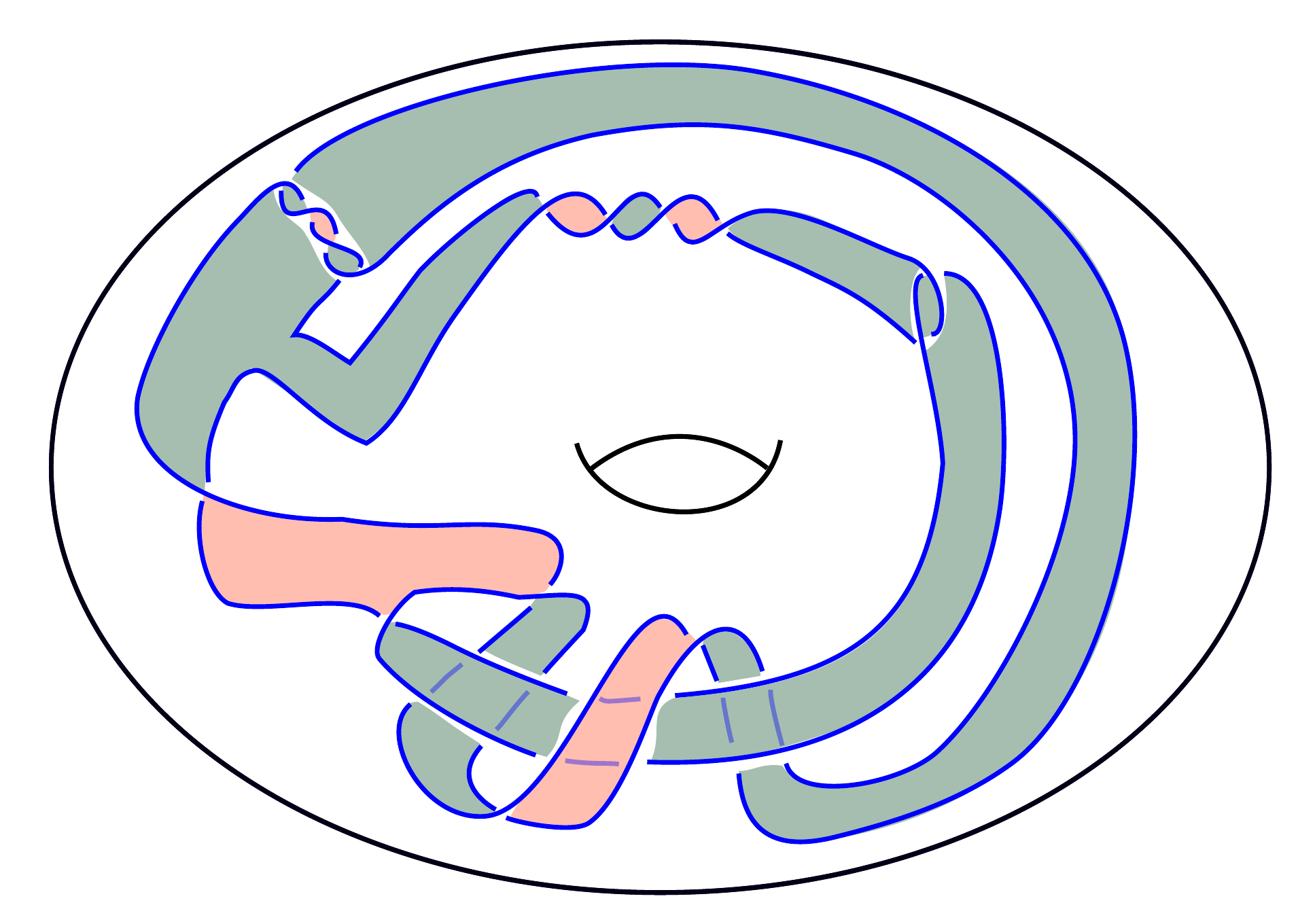}
  \caption{}
\label{2bvb:sfig3}
\end{subfigure}%
\begin{subfigure}{.5\textwidth}
  \centering
  \includegraphics[width=.8\linewidth]{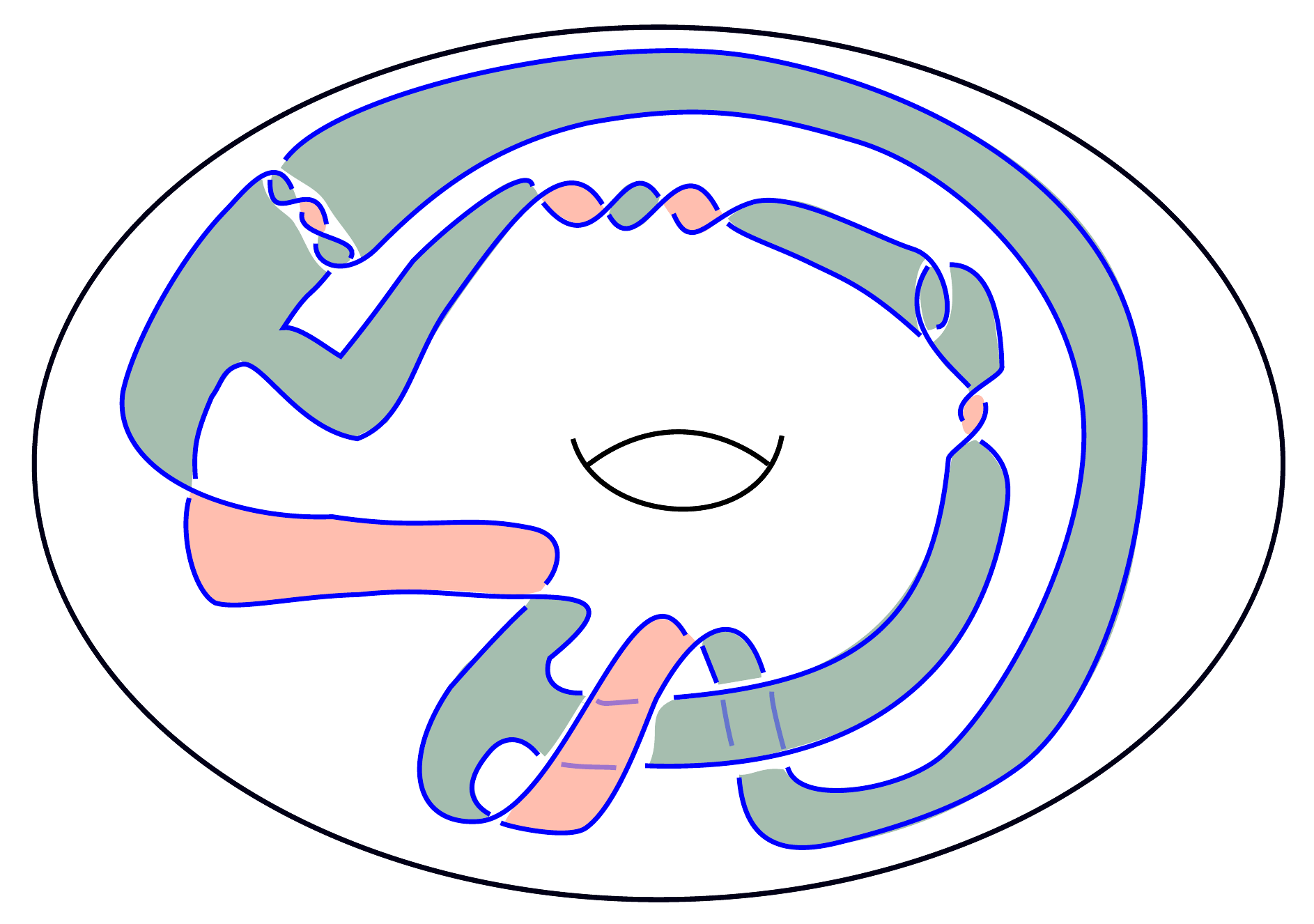}
 \caption{}
 \label{2bvb:sfig4}
\end{subfigure}
\begin{subfigure}{.5\textwidth}
  \centering
  \includegraphics[width=.8\linewidth]{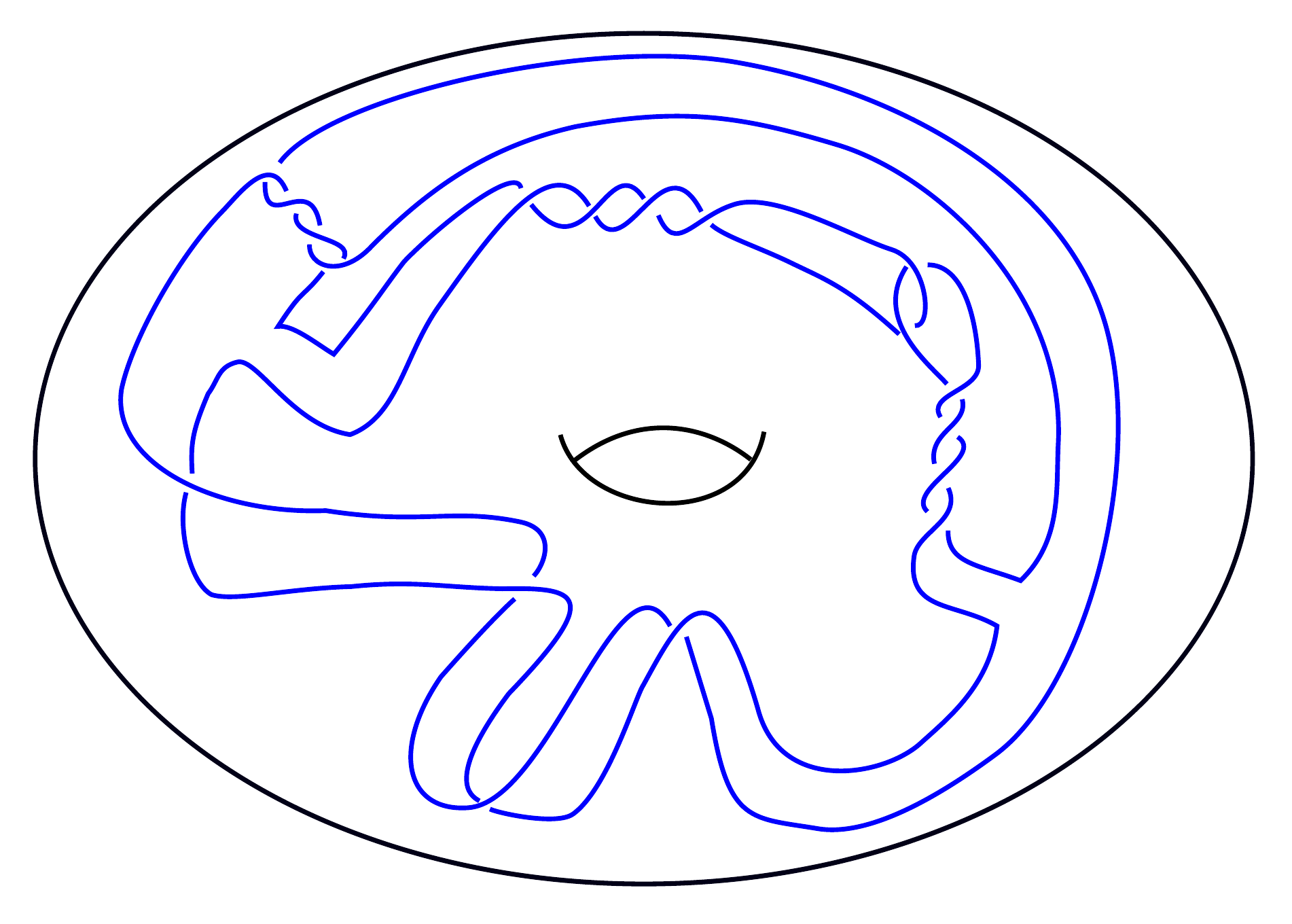}
 \caption{}
 \label{2bvb:sfig5}
\end{subfigure}%
\begin{subfigure}{.5\textwidth}
  \centering
  \includegraphics[width=.8\linewidth]{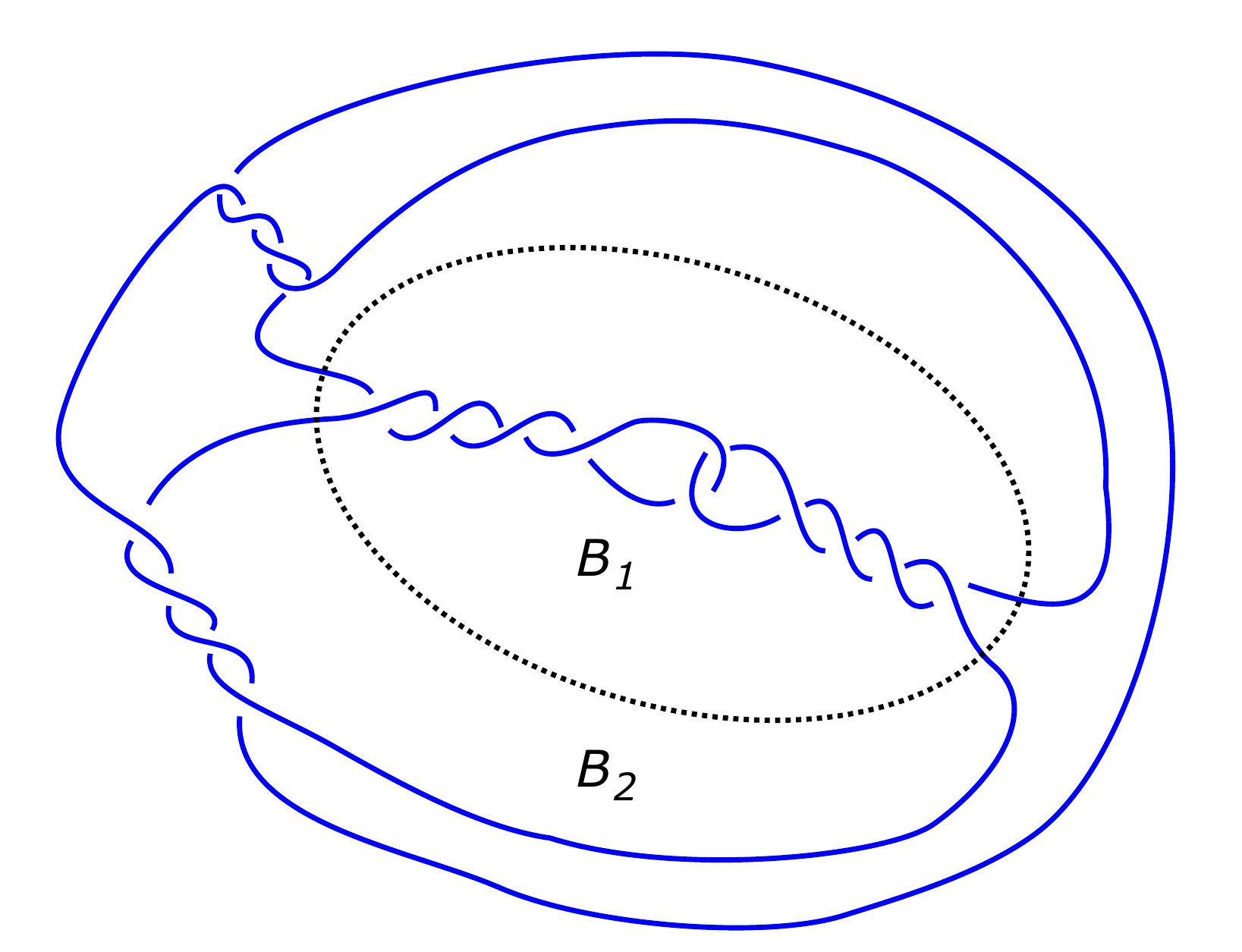}
 \caption{}
\label{2bvb:sfig6}
\end{subfigure}
\caption{Isotopy of a untwisted vertical banding of two 2-bridge knots.}
\label{2bvb}
\end{figure}

\begin{figure}[h]
\numberwithin{figure}{section}
\includegraphics[height=70mm]{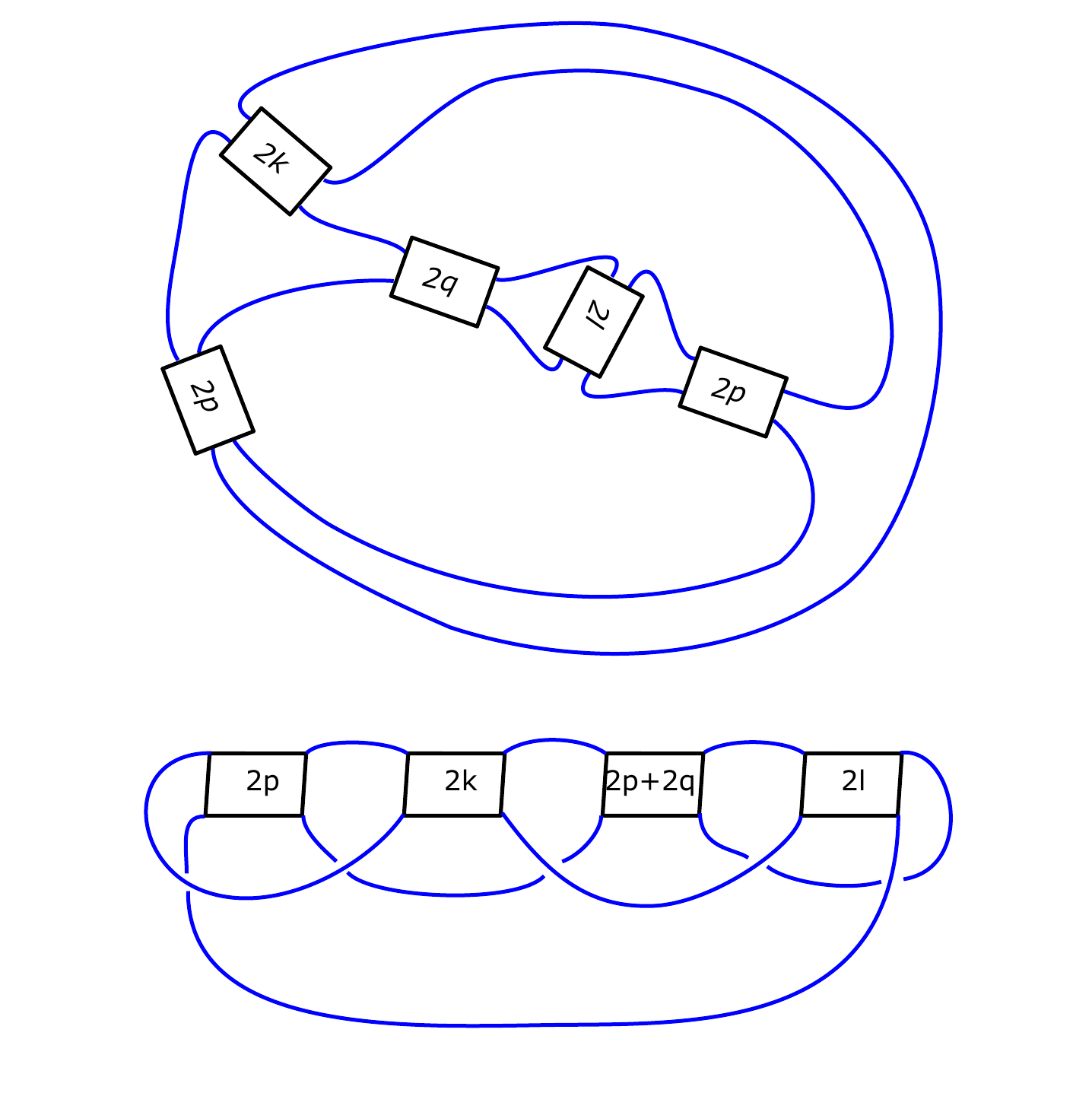}
\caption{Tangle sum of two rational tangles isotopic to a vertical banding of  two 2-bridge knots.}
\label{2bvb8}
\end{figure}

\begin{ejem}
Now we consider a vertical banding $K$ of two satellite $(1,1)$-knots of genus one. So, we construct examples satisfying case (3) of Theorem \ref{piecestheorem}. Take a piece of Type $P3$ in $S\times[0,1/2]$, consisting of a vertical annulus $E_1$ of slope $(p_0,q_0)$, where $\vert p_0 \vert$, $\vert q_0 \vert \geq 2$, a vertical band $b_1$ with $2k$ twists such that the core of the band is isotopic to an arc on the annulus $E_1$, plus a vertical band $b_2$. Take also a piece of type $P3'$ in $S\times[1/2,1]$, consisting of a vertical annulus $E_2$ of slope $(p_1,q_1)$, where $\vert p_1 \vert$, $\vert q_1 \vert \geq 2$, a vertical band $b_1'$ with $2l$ twists such that the core of the band is isotopic to an arc on the annulus $E_2$, plus a vertical band $b_2'$. Suppose also that $\vert p_0q_1-q_0p_1 \vert \geq 2$.  Let $F$ be the union of the annuli and bands, so $F$ is a genus two surface. Let $N(E_1)$ and $N(E_2)$ be regular neighborhoods of $E_1$ and $E_2$ which contain the bands $b_1$ and $b_1'$ respectively. Let $H=N(E_1)\cup N(b_e\cup b_2') \cup N(E_2)$; note that $H$ is a genus two handlebody such that $F\subset H$. It follows from the assumptions on the slopes and the main result of \cite{EuM}, that $\partial H$ is a closed genus two surface, which is incompressible in $E(K)$. This shows that there exists genus two $(1,1)$-knots which contain a closed incompressible of genus two in its exterior. 

\end{ejem}


\section{Satellite genus two knots}
\label{sec5}

Morimoto and Sakuma \cite{MS} determined the knot types of satellite tunnel number one knots in $S^3$. These knots are constructed as follows. Let $K_0$ be a  $(p,q)$-torus knot in $S^3$ with $p\neq 1$ and $q\neq 1$, and let $L= K_1 \cup K_2$ be a 2-bridge link of type $(\alpha, \beta)$ in $S^3$ with $\alpha \geq 4$. Note that $K_0$ is a non-trivial knot, and $L$ is neither a trivial link nor a Hopf link. Since $K_2$ is the trivial knot in $S^3$, there is a an orientation preserving homeomorphism $f: E(K_2) \rightarrow N(K_0)$ which takes a meridian $m_2 \subset \bd E(K_2)$ of $K_2$ to a fiber $h \subset \bd N(K_0)=\bd E(K_0)$ of the unique Seifert fibration of $E(K_0)$. The knot $f(K_1)\subset N(K_0) \subset S^3$ is denoted by the symbol $K(\alpha, \beta; p, q)$. Every satellite knot of tunnel number one has the form $K(\alpha, \beta; p, q)$ for some integers $\alpha, \beta, p, q$. Eudave-Mu\~noz \cite{Eu} obtained another description of these knots. These knots are known to admit $(1,1)$-decompositions.

The aim of this section if to prove the following:

\begin{theorem}
\label{teoclasificacion}
Let $K$ be a satellite tunnel number one  genus two knot in $S^3$. Then $K=K(\alpha, \beta; p, q)$, where $\frac{\alpha}{\beta}$ is given by one of the continued fractions:
\begin{enumerate}
\item  $[2, 2u, 2, 2v+1, 2, 2w,2]$ 
\item $[2, 2u+1, 2, 2v, 2, 2w+1,2]$ 
\end{enumerate}
with $u,v,w \in \mathbb{Z} -\{0\}$
\end{theorem}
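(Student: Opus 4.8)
The plan is to combine the structural results of Section~\ref{sec4} with the Morimoto--Sakuma classification, reading off the $4$-tuple $(\alpha,\beta;p,q)$ from the decomposition of the Seifert surface. Since $K$ is a satellite tunnel number one knot, it is necessarily a $(1,1)$-knot, so it fits the framework of the earlier sections; however the spanning annulus arguments of Lemmas~\ref{lem4}--\ref{lem6} no longer forbid a spanning annulus, and in fact the essential swallow-follow torus of the satellite structure \emph{is} the source of such an annulus. So first I would redo the Morse-position analysis of a genus two Seifert surface $F$ for $K$ allowing $\tilde F$ to contain one (or more) essential vertical annuli, i.e.\ allowing $F_i'$ to be a union of annuli of slope $(p_0,q_0)$ with $\vert p_0\vert,\vert q_0\vert\ge 2$. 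Lemma~\ref{lem4} says the companion is the $(p,q)$-torus knot $K_0$ with that slope, so the vertical annulus records $p$ and $q$ directly.

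Next I would show that after cutting $S\times I$ along the spanning annulus $A$ one obtains $V\times I$ with $V$ an annulus whose core is $K_0$, and the part of $F$ lying in $V\times I$ together with the trivial arcs of $K$ in $H_0,H_1$ assembles, by the same piece-by-piece bookkeeping as in Theorem~\ref{piecestheorem}, into a once-punctured surface for a pattern knot sitting in the solid torus $N(K_0)$. The pattern is $K_1\subset N(K_0)$, a component of the $2$-bridge link $L=(\alpha,\beta)$, and its genus-one (Seifert or spanning) surface inside the solid torus forces the link $L$ to be one of the cross-cap-number-two $2$-bridge links classified in~\cite{RV}; equivalently, the vertical-band description of the pattern yields exactly the continued-fraction shapes $[2,2u,2,2v+1,2,2w,2]$ (orientable pattern surface, giving a genus-one pattern) and $[2,2u+1,2,2v,2,2w+1,2]$ (non-orientable pattern surface, i.e.\ the once-punctured Klein bottle case). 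The condition $u,v,w\ne 0$ comes from requiring that each band carry a nonzero (even or odd as appropriate) number of twists, which is precisely what was needed in Section~\ref{sec4} to keep the surface incompressible and of the correct genus, and what makes $\alpha\ge 4$ and $L$ not a trivial or Hopf link.

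Then I would run the converse: given a $4$-tuple with $\frac{\alpha}{\beta}$ of one of the two listed forms, the $2$-bridge link $L$ bounds the required once-punctured torus or Klein bottle in its exterior (one checks this on the standard $2$-bridge diagram, reading the continued fraction as a plumbing of bands), transport it via $f$ into $N(K_0)$, and cap off against the Seifert fibration to build an explicit genus two Seifert surface for $K(\alpha,\beta;p,q)$; this shows such $K$ is genus two (genus exactly two, not one, follows because $K$ is a satellite so not $2$-bridge, hence by Scharlemann--Goda--Teragaito it is not genus one). Conversely any genus two satellite tunnel number one knot is $(1,1)$, so the forward direction applies and pins the tuple down.

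The main obstacle I expect is the first step: carefully re-doing the saddle analysis of Section~\ref{sec3} in the presence of a spanning annulus, showing that one may isotope $F$ so that $\tilde F$ contains \emph{exactly one} essential vertical annulus (the companion annulus) and that the rest of $F$ still decomposes into the pieces $P1,P2,P3$ and their primed versions, now with the relevant slope satisfying $\vert p_0\vert,\vert q_0\vert\ge 2$ instead of the previous $\vert p_0\vert=1$ restriction. Equivalently, one must rule out the configurations where more than one vertical annulus survives or where the annulus is knotted in a way not accounted for by a single torus-knot companion; here the hypothesis that $K$ has tunnel number one (so $E(K)$ minus the companion torus is a Seifert piece plus a $2$-bridge link exterior, with no room for nested satellites) does the work. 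Once the pattern is identified as a sub-link of a $2$-bridge link bounding a cross-cap-number-two surface, translating the band data into the explicit continued fractions is a finite, diagrammatic computation of the kind illustrated in the Examples, and the parity split (even/even/even versus odd/even/odd, modulo cyclic reindexing of the band positions) gives precisely cases (1) and (2).
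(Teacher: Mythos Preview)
Your outline takes a genuinely different route from the paper, and as written it has a real gap. The paper does \emph{not} redo the $(1,1)$ Morse analysis of Section~\ref{sec3} in the presence of a spanning annulus. Instead it first proves (in the lemma preceding Lemma~\ref{meri-comp}) that a minimal genus Seifert surface $F$ can be isotoped entirely into $N(K_0)$; this uses the Burde--Zieschang lemma on how $F$ meets $\partial N(K_0)$ together with ad hoc arguments ruling out the pair-of-pants and punctured-torus cases (including an appeal to \cite{M} on unknotting by twists). Once $F\subset N(K_0)$, one pulls back by $f^{-1}$ to get a genus~two Seifert surface $F'$ for $K_1$ which is essential in the $2$-bridge link exterior $E(L)$, and from this point on the analysis is carried out relative to the $2$-bridge sphere of $L$, not the Heegaard torus of $K$.

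The key idea you are missing is Lemma~\ref{meri-comp}: $F'$ is \emph{meridionally compressible} along $K_2$, and after two meridional compressions it becomes a $5$-punctured sphere $\Sigma$, i.e.\ a disk bounded by $K_1$ meeting $K_2$ transversely in four points, with $\vert Y(\Sigma)\vert=0$. This planar surface is what is then read off diagrammatically (four small disks transverse to $K_2$ joined by three bands), and the linking condition $lk(K_1,K_2)=0$ forces the parity pattern on the band twists that gives exactly the two continued-fraction shapes. Your sketch instead asserts that the pattern bounds a ``genus-one (Seifert or spanning) surface inside the solid torus''; this is not correct---$K_1$ is an unknot, and what it bounds in $E(L)$ is the genus~two surface $F'$, which becomes tractable only after the meridional compressions. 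Relatedly, your invocation of \cite{RV} is off: that paper classifies crosscap-number-two $(1,1)$-\emph{knots}, and is cited here only for the technical Lemma~\ref{KT} on Morse position in a $2$-bridge link exterior, not for a classification of $2$-bridge links. Without the meridional-compression step there is no clear mechanism in your plan that produces the specific seven-term continued fractions.
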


Let $l$ and $m$ be a preferred longitude and a meridian for $\bd N(K_0)$, respectively. 
Notice that $\Delta(l, h)=pq$ and then $\Delta(f^{-1}(l), m_2)=pq$.

The next lemma can be found in \cite{BZ}, and will be useful.

\begin{lemma}
Let $K=K(\alpha, \beta; p, q)$ be a satellite tunnel number one knots. Let $F$ be a minimal genus Seifert surface for $K$. The surface $F$ can be isotoped in such a way that $F \cap \bd N(K_0)$ consists of $r$ preferred longitudes and $F \cap (S^3 -N(K_0))$ is made of $r$ components which are Seifert surfaces for $K_0$.
\end{lemma}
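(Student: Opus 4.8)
The plan is to put $F$ in good position with respect to the companion torus $T=\bd N(K_0)$ by the classical incompressibility and innermost--disk arguments, to identify the intersection slope homologically as the preferred longitude $l$, and then to use minimality of the genus together with the fact that the torus knot $K_0$ is fibered to recognize the outer pieces as Seifert surfaces for $K_0$. Throughout write $V=N(K_0)$ and $W=E(K_0)=S^3-\mathrm{int}\,V$, so $T=\bd V=\bd W$.

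First I record the standing hypotheses. Since $K_0$ is a nontrivial $(p,q)$--torus knot and the pattern of $K$ in $V$ is geometrically essential (the 2--bridge link $L$ is neither trivial nor a Hopf link), $T$ is incompressible in $E(K)$, while $S^3$ is irreducible. As $F$ has minimal genus it is incompressible in $E(K)$. I isotope $F$ to be transverse to $T$; because $\bd F=K\subset\mathrm{int}\,V$ is disjoint from $T$, the intersection $F\cap T$ is a finite disjoint union of simple closed curves in the interior of $F$. Next I minimize $|F\cap T|$ within the isotopy class of $F$. An innermost inessential curve of $F\cap T$ would bound a disk $D\subset T$ disjoint from $K$; incompressibility of $F$ makes its boundary bound a disk on $F$, and irreducibility of $S^3$ then lets me isotope $F$ across the resulting sphere to decrease $|F\cap T|$, a contradiction. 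Hence every curve of $F\cap T$ is essential on $T$, so the curves are mutually parallel of a single slope $\gamma$. Minimality of $|F\cap T|$ also forbids an adjacent pair of curves with opposite coorientation (such a pair cobounds an annulus in $T$ across which $F$ could be pushed to cancel them), so the curves are coherently oriented.

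Now I determine $\gamma$. Each component of $F\cap W$ is a properly embedded surface, so its boundary lies in $\ker\bigl(H_1(T)\to H_1(W)\bigr)=\langle l\rangle$, the subgroup generated by the preferred longitude, which is the unique null--homologous essential slope in the knot exterior $W$. Since the curves are coherently oriented and $|F\cap T|\ge 1$, their total class is a nonzero multiple of $\gamma$; being a multiple of $l$ forces $\gamma=l$. Thus $F\cap T$ consists of some number $n\ge 1$ of preferred longitudes, and $F\cap W=\bigsqcup_j P_j$ with each $P_j$ incompressible in $W$ and bounded by $t_j$ longitudes, $\sum_j t_j=n$. The class $t_j\,[l]\in H_2(W,\bd W)$ has Thurston norm $t_j(2g(K_0)-1)$, so $-\chi(P_j)\ge t_j(2g(K_0)-1)$ and hence $\chi(F\cap W)\le n\,\chi(\Sigma_0)$, where $\Sigma_0$ is a minimal genus Seifert surface for $K_0$. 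Replacing $F\cap W$ by $n$ disjoint parallel copies of $\Sigma_0$, which carry the same boundary longitudes and so re--glue to $F\cap V$ along $T$, yields a Seifert surface $F'$ for $K$ with $\chi(F')\ge\chi(F)$; minimality forces $\chi(F')=\chi(F)$, so $F\cap W$ is itself norm--minimizing for the class $n\,[l]$. Because $K_0$ is a torus knot it is fibered, with fiber $\Sigma_0$, and the norm--minimizing surfaces in $W$ representing $n\,[l]$ are, up to isotopy, exactly $n$ parallel copies of the fiber. Consequently $F\cap W$ is isotopic to $n$ disjoint copies of $\Sigma_0$, each with connected boundary; this isotopy is ambient and extends over $V$ (dragging the longitudes along $T$), so it realizes an honest isotopy of $F$. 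Setting $r=n$ gives $F\cap T$ equal to $r$ longitudes and $F\cap W$ equal to $r$ Seifert surfaces for $K_0$, one per longitude.

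The main obstacle is precisely this last step: recognizing the outer pieces as genuine Seifert surfaces for $K_0$ with \emph{connected} boundary, and doing so by an \emph{isotopy} rather than a cut--and--paste. The homological computation alone only yields parallel longitudes, and a priori a single connected norm--minimizing piece could carry several boundary longitudes; the decisive inputs are the additivity of the Thurston norm (equivalently Schubert's genus estimate for satellites), which certifies the Euler characteristic bound and the exchange for standard copies of $\Sigma_0$ without increasing genus, and the fiberedness of the torus knot $K_0$, which forces every norm--minimizing piece to be isotopic to a union of fibers and thereby legitimizes the word ``isotoped'' in the statement. One should also check that the resulting surface is connected and coherently oriented, so that it is a bona fide Seifert surface for the connected knot $K$; this follows from the coherence of the longitudes established above.
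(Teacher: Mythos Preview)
The paper does not supply its own proof of this lemma; it simply attributes the result to Burde--Zieschang and moves on. Your argument is essentially the standard one: minimise $|F\cap T|$, use incompressibility of $F$ and of $T$ to make all intersection curves essential on $T$, identify the slope as the preferred longitude via $H_1$, and then invoke genus-minimality (sharpened here via the Thurston norm and the fiberedness of torus-knot exteriors) to recognise the pieces in $W=E(K_0)$ as parallel copies of the fiber. Your use of fiberedness at the end, to upgrade the replacement of $F\cap W$ by fibers into an honest isotopy, is exactly what is needed to justify the word ``isotoped'' in the statement.

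One step deserves more care. Your coherent-orientation move (``push $F$ across the annulus $A\subset T$ between two adjacent curves of opposite coorientation'') is in general an annulus surgery rather than an isotopy: it preserves $\chi(F)$ but can change the isotopy class of $F$, so it does not by itself contradict minimality of $|F\cap T|$ \emph{within the isotopy class} that you fixed at the outset. The cleanest repair is to minimise $|F\cap T|$ over all minimal-genus Seifert surfaces for $K$, not just over the isotopy class of the given one; then the annulus surgery (followed, if needed, by discarding any closed component) is legitimate and forces coherence. This is harmless for the paper's application, since the subsequent lemma only needs \emph{some} minimal Seifert surface in good position relative to $\partial N(K_0)$. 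Alternatively, one can bypass coherence entirely and argue componentwise: each piece $P_j$ is essential in the torus-knot exterior $W$, and the classification of essential surfaces there forces its boundary slope to be $l$ (or the cabling slope, a case one then eliminates by a further genus comparison).
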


Suppose the 2-bridge presentation of $L$ is given relative to some 2-sphere $S$ in $S^3$, notice that in this case $H_0$ and $H_1$ are 3-balls. 
The following is Lemma 3.2 of \cite{RV}.

\begin{lemma}
\label{KT}
Let $F$ be a surface in $S^3$ spanned by $K_1$ (orientable or not) and transverse to $K_2$, such that $F'= F \cap E(L)$ is essential and meridionally incompressible in $E(L)$. If $F'$ is isotoped so as to satisfy (M1)-M(4) with minimal complexity, then $\vert Y(F')\vert= 2- (\chi(F')+ \vert \bd F' \vert) $, and
\begin{enumerate}
\item each critical point of $h\vert \tilde{F}$ is a saddle,
\item for $0\leq r \leq 1$ any circle of $S_r \cap F$ is  nontrivial in $S_r- L$ and $F$, and
\item $F_0$ and $F_1$ each consists of one cancelling disk.
\end{enumerate}
\end{lemma}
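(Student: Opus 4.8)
The plan is to mirror the minimal-complexity argument behind Lemma~\ref{lem1}, adapting it to the genus-zero setting in which $S$ is a $2$-sphere and $H_0,H_1$ are $3$-balls, and to the fact that here $F'$ is only assumed essential and meridionally incompressible (with $F$ possibly nonorientable). Throughout I would argue by contradiction: any configuration that violates one of the three conclusions gets converted, by an isotopy or a surgery supported near a single disk, into a new surface still satisfying (M1)--(M4) but with strictly smaller complexity $c(F')=\vert\partial F'_0\vert+\vert\partial F'_1\vert+\vert Y(F')\vert$, contradicting minimality. The only alternative outcomes of such a move—a genuine compression, a $\partial$-compression, a meridional compression whose curve is not boundary-parallel, or a boundary-parallelism of $F'$—are ruled out precisely by the hypothesis that $F'$ is essential and meridionally incompressible.

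First I would prove conclusion (1), that $h\vert\tilde F$ has no center critical points. Suppose $p$ is an interior local maximum at level $s$ (minima being symmetric), and take a level $S_r$, $r=s-\epsilon$, just below the topmost critical value. Among the disks on the $2$-sphere $S_r$ bounded by circles of $F\cap S_r$, choose one, $D$, that is innermost, so $\mathrm{int}\,D\cap F=\emptyset$ and $\partial D=:c$ is a circle of $F\cap S_r$. Standard innermost-arc reductions, using that $L$ is monotone in $S\times I$, let me reduce to $\vert D\cap L\vert\le 1$. If $D\cap L=\emptyset$, incompressibility forces $c$ to bound a disk $\delta\subset F$; then $\delta\cup D$ bounds a ball and pushing $F$ across it lowers $\vert Y(F')\vert$ or $\vert\partial F'_i\vert$. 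If $\vert D\cap L\vert=1$, then $\delta\cup D$ is a disk meeting $L$ once with boundary on $F$, so meridional incompressibility makes $c$ parallel in $F$ to a meridional boundary circle, again yielding a complexity-reducing isotopy. In every case minimality of $c(F')$ is contradicted, so no extrema occur and each interior critical point is a saddle.

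Next I would establish (2) and (3) together. For (2), a circle $c$ of $S_r\cap F$ that is inessential in $S_r-L$ bounds an innermost disk $D\subset S_r$ with interior disjoint from $F$ and $\vert D\cap L\vert\in\{0,1\}$; the two subcases of the previous paragraph then produce a complexity-reducing isotopy, so no such $c$ exists, and if $c$ bounded a disk in $F$ the same capping would exhibit a compression or a reducing ball, so $c$ is nontrivial in $F$ as well. For (3), note the essential structural difference from Lemma~\ref{lem1}: since each $H_i$ is a $3$-ball, every simple closed curve of $F\cap S_i$ bounds a disk $D$ in $H_i$. If a circle component of $F_i$ existed, such a $D$ disjoint from $L$ would compress $F$ (impossible, as $c$ is nontrivial in $F$ by (2)), while a $D$ meeting $L$ is excluded using $\partial$-incompressibility and meridional incompressibility; hence $F_i$ has no annular or non-cancelling disk components. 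By (M3) the only components of $F_i$ meeting $L$ are cancelling disks, and since $K_1$ is in one-bridge position with respect to $S$ (its intersection with each ball is a single trivial arc), exactly one cancelling disk survives on each side, giving $F_0$ and $F_1$ each a single cancelling disk. Finally, for the index count: by (1) every interior critical point is a band attachment lowering the Euler characteristic of the growing surface by one, and by (3) we have $\chi(F_0)=\chi(F_1)=1$; building $F'$ from bottom to top, the $\vert Y(F')\vert$ saddles, the two cancelling disks, and the $\vert\partial F'\vert$ boundary arcs and meridian circles account, respectively, for the variable, constant, and boundary terms, and carrying out this bookkeeping (exactly as in the corresponding count in \cite{RV}) yields $\vert Y(F')\vert=2-(\chi(F')+\vert\partial F'\vert)$.

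I expect the main obstacle to be the correct handling of meridional compressions arising from $K_2$. When a cap circle or level circle bounds a disk meeting $K_2$ exactly once, one must carefully distinguish a genuine meridional compression—permitted only when the curve is \emph{not} already parallel to a meridional boundary component—from a complexity-reducing isotopy, and verify that the surgered surface still satisfies (M1)--(M4). The nonorientability of $F$ blocks any shortcut relying on a coherent transverse orientation, so each reduction must be realized by an explicit local isotopy near the relevant disk, and it is the combination of these meridional moves with the innermost-disk induction that requires the most care.
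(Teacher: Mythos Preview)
The paper does not prove this lemma at all: it is simply quoted as Lemma~3.2 of \cite{RV}, with no argument supplied. Your proposal is therefore not competing with any proof in the paper, but rather reconstructing what the argument in \cite{RV} presumably looks like. Your overall strategy---adapt the minimal-complexity argument of Lemma~\ref{lem1} to the genus-zero setting where $S$ is a sphere and $H_0,H_1$ are balls, using incompressibility and meridional incompressibility in place of the annulus analysis---is exactly the right one, and your treatment of part~(1) and the Euler-characteristic bookkeeping is fine.

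Two places deserve tightening. First, in your argument for~(2), the case $\vert D\cap L\vert=1$ does not actually arise: if $c$ is trivial in $S_r-L$ it bounds a disk disjoint from $L$, so only the $\vert D\cap L\vert=0$ branch is relevant there (the one-point case belongs to the cap argument in~(1), not to~(2)). Second, and more substantively, your argument for~(3) is too quick. You assert that a circle $c\subset F\cap S_i$ bounds a disk $D$ in the ball $H_i$, and then split on whether $D$ meets $L$; but you have not arranged $D$ to be disjoint from the rest of $F_i$, nor dealt with the fact that $H_i$ contains an arc of $K_2$ as well as the arc of $K_1$. The cleaner route is to work on the sphere $S_i$: by~(2), each of the two disks on $S_i$ bounded by $c$ contains at least one point of $L\cap S_i$; taking $c$ innermost among the circle components of $F\cap S_i$, one of these disks has interior disjoint from $F$ and contains either a single point of $K_2$ (yielding a meridional compression, contradicting the hypothesis) or a single point of $K_1$ (yielding, together with the cancelling disk, a $\partial$-compression or a complexity reduction), or two points, in which case an outermost-arc argument on the cancelling disk again reduces complexity. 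Filling in this innermost-circle analysis on $S_i$ is the missing step; once it is in place your sketch goes through.
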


\begin{lemma}
Let $K=K(\alpha, \beta; p, q)$ be a satellite tunnel number one knot. Assume $K$ is of genus two and is not a torus knot. Let $F$ be a minimal Seifert surface for $K$, then $F \subset N(K_0)$.
\end{lemma}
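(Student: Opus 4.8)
The plan is to show that a minimal genus Seifert surface $F$ for $K=K(\alpha,\beta;p,q)$ can be isotoped into the solid torus $N(K_0)$, using the previous two lemmas together with a genus count. First I would invoke the lemma from \cite{BZ}: after isotopy, $F\cap\bd N(K_0)$ consists of $r$ preferred longitudes of $K_0$ and $F\cap E(N(K_0))$ consists of $r$ Seifert surfaces for $K_0$, each of genus at least $g(K_0)=\frac12(p-1)(q-1)$. Then the portion of $F$ lying inside $N(K_0)$ is a surface $F'$ properly embedded in $N(K_0)$ with $\bd F'$ consisting of $K$ together with $r$ longitudes of $K_0$; an Euler characteristic computation gives $\chi(F)=\chi(F')+\sum_{i=1}^r\chi(\Sigma_i)$, where the $\Sigma_i$ are the outside Seifert surfaces for $K_0$. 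Since $g(K)=2$, i.e. $\chi(F)=-3$, and each $\chi(\Sigma_i)\le 1-2g(K_0)\le -1$ (because $p,q\ge 2$ forces $g(K_0)\ge 1$), the only way the arithmetic can balance with $r\ge 1$ is a very restricted list of possibilities for $r$ and for $\chi(F')$.

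The key step is then to rule out $r\ge 1$, i.e. to show $F\cap\bd N(K_0)=\emptyset$. Suppose $r\ge 1$ and choose the isotopy so that $r$ is minimal. An innermost/outermost argument on $F'\subset N(K_0)$: since $F$ is incompressible in $E(K)$ and $\bd N(K_0)$ is incompressible in $E(K)$ (as $K$ is a satellite with companion torus $\bd N(K_0)$), no longitude curve of $F\cap\bd N(K_0)$ bounds a compressing disk on either side. One then argues that some component of $F'$ (the inside part) must be an annulus joining two longitudes of $K_0$, or that $F'$ has a component that is itself a Seifert surface for $K_0$ parallel to an outside one; either way one produces an isotopy of $F$ reducing $r$, contradicting minimality — unless $r=0$. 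Concretely, I would push a longitudinal curve of $F\cap\bd N(K_0)$ that is outermost on $F'$ across $N(K_0)$, using that $K$ meets each meridian disk of $N(K_0)$ in a bounded way (the wrapping/winding number is $pq\neq 0$), to reduce intersections. After this, $F\subset N(K_0)$.

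The main obstacle I anticipate is controlling the inside piece $F'$: a priori $F'$ need not be connected and need not be incompressible in $N(K_0)$, only in $E(K)$, so the innermost-disk machinery must be set up carefully relative to $K\subset N(K_0)$, and one must make sure the isotopies reducing $r$ genuinely preserve the Seifert-surface property (not just the surface). The genus-two hypothesis is what makes the bookkeeping tractable: with $\chi(F)=-3$ and $g(K_0)\ge 1$, there is essentially no room, so the case analysis on $r$ is short. I would also use Lemma~\ref{KT} only indirectly here — its real role is in the subsequent analysis of $F$ inside $N(K_0)$ via the 2-bridge link $L$ — so for this lemma the proof stays at the level of the companion torus and the $\chi$-count. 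Once $F\subset N(K_0)$ is established, the classification of the continued fractions in Theorem~\ref{teoclasificacion} follows by transporting $F$ through the homeomorphism $f$ to a spanning surface for $K_1$ in the 2-bridge link exterior $E(L)$ and applying Lemma~\ref{KT} together with the crosscap-number-two analysis of \cite{RV}.
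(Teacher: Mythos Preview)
Your Euler-characteristic bookkeeping is fine and indeed pins down exactly the short list the paper treats: with $\chi(F)=-3$ and $g(K_0)\ge 1$ one gets $r\le 2$, and the inside piece $F'=F\cap N(K_0)$ is connected with boundary $K\cup(\text{$r$ longitudes})$, hence an annulus ($r=1$, $g'=0$), a twice-punctured torus ($r=1$, $g'=1$), or a pair of pants ($r=2$), with $K_0$ forced to be the trefoil in the last two cases. The annulus case is the torus-knot case, as you and the paper both note.

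The gap is in how you propose to eliminate the remaining two cases. An innermost/outermost argument cannot get started here: the curves of $F\cap\bd N(K_0)$ are preferred longitudes of $K_0$, essential on the incompressible companion torus and bounding no disk on either side, so there is no ``outermost'' curve to push across $N(K_0)$, and since $F'$ is a single connected piece there is no stray annulus or parallel Seifert-surface component to cancel. Minimising $r$ by isotopy only brings you to these two configurations; it does not reduce $r$ further. The paper disposes of them by entirely different, case-specific arguments: for the pair of pants it transports $F'$ through $f^{-1}$ to $E(L)$, performs $1/6$ Dehn surgery on $K_2$, and invokes Mathieu's theorem \cite{M} to reach a contradiction; for the twice-punctured torus it again passes to $E(L)$ and plays Lemma~\ref{KT} (which gives $\vert Y\vert=2$) against the Floyd--Hatcher lower bound $\vert Y\vert\ge 6$ from \cite{FH}. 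So Lemma~\ref{KT} is not peripheral to this lemma --- it is one of the two tools that actually kill the nontrivial cases --- and you will need both it and an external input (Mathieu or Floyd--Hatcher) rather than a purely topological push-off.
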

\begin{proof}
Let us assume that $F$ has been isotoped so that $F \cap \bd N(K_0)\neq \emptyset$ consists of preferred longitudes. The intersection $F \cap N(K_0)$ is either an annulus, or a pair of pants, or a torus with two holes. 

If $F \cap  N(K_0)$ is an annulus, then $K$ is a torus knot, which is not the case.

Now, suppose $F \cap  N(K_0)$ is a pair of pants $P$ whose  boundary components consist of  $K$ and two preferred longitudes $\lambda_1$ and $\lambda_2$. Since $F \cap (S^3-N(K_0))$ is a genus one surface for  $K_0$, then $K_0$ is a  $(3,2)$-torus knot. For $m$, a meridian of $K_0$, $\Delta (\lambda_i, m)=1$, then $\Delta(f^{-1}(\lambda_i), l_2)=1$ with $l_2$ a preferred longitude of $K_2$. For $l$, a preferred longitude of $K_0$, $\Delta (\lambda_i, l)=6$, then $\Delta(f^{-1}(\lambda_i), m_2)=6$ with $m_2$ a meridian of $K_2$. Then $f^{-1}(P)$ is a pair of pants in $E(K_2 \cup K_1)$ whose boundary components consist of one preferred longitude in $\bd N(K_1)$ and two curves of slope $1/6$ in $\bd N(K_2)$.

As $K_2$ is a trivial knot in $S^3$, by performing $1/6$ Dehn surgery on $K_2$ we obtain $S^3$ again and then $K_1$ becomes a knot $K_1'$. The surface $f^{-1}(P)$ after surgery is then a surface with one boundary component a longitude of $K_1'$ and the other two are meridians of $K_2'$, the core of the surgered solid torus. Then $K_1'$ is a trivial knot for it bounds a disk, namely the image of $f^{-1}(P)$ union two meridians disks of $K_2'$. As $K_1$ and $K_1'$ are both trivial knots, this not possible by the main result of \cite{M}, since $K_1 \cup K_2$ is not the trivial link neither the Hopf link.

Finally if $F \cap  N(K_0)$ is torus $T$ with two boundary components consisting of $K$, and $\lambda_1$ a preferred longitude for $\bd N(K_0)$. By a similar argument as above, $f^{-1}(T) \subset E(K_2)$ is a torus in $E(K_2 \cup K_1)$  whose boundary components consist of one preferred longitude in $\bd N(K_1)$, and one curve of slope $1/6$ in $\bd N(K_2)$.

Suppose a 2-bridge presentation of $L=K_1\cup K_2$ is given relative to some 2-sphere $S$ in $S^3$, notice that in this case $H_0$ and $H_1$ are 3-balls. 

The surface $T'=f^{-1}(T)$ can be isotoped  to satisfy $M1- M4$ with minimal complexity so that $T'_i$ consists of seven cancelling disks, one in $K_1$ and the others in $K_2$, for $i=0,1$. This can be achieved by arguments similar to Lemma  \ref{KT}. 
By Lemma 7.1 of \cite{FH}, $\vert Y(T')\vert \geq 6$; on the other hand, Lemma  \ref{KT} implies that $\vert Y(T')\vert=2$. This is a contradiction.

Therefore,  $F \subset N(K_0)$.

\end{proof}

The lemma above implies that $F'=f^{-1}(F) \subset E(K_2)$. Moreover $F'$ is an incompressible genus two Seifert surface for $K_1$.

\begin{lemma}
\label{meri-comp}
 The surface $F'$ is meridionally compressible. Moreover, $F'$ can be meridionally compressed twice to obtain a  disk $\Sigma$ that satisfies the conditions of Lemma \ref{KT}.
\end{lemma}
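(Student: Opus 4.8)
The plan is to study the genus two Seifert surface $F'$ for $K_1$ inside $E(K_2)$ via its Morse position and show that it must be meridionally compressible by a counting argument, then iterate. First I would put $F'$ in the position of Lemma \ref{KT}: since $F'$ is a genus two surface with one boundary component, if it were essential and meridionally incompressible in $E(L)$ then Lemma \ref{KT} would give $|Y(F')| = 2 - (\chi(F') + |\bd F'|) = 2 - ((-3) + 1) = 4$, with $F'_0, F'_1$ each a single cancelling disk for the arc of $K_1$, and all circle components of $S_r \cap F'$ nontrivial in $S_r - L$. Now I would count: since $L = K_1 \cup K_2$ is a $2$-bridge link presented with respect to a $2$-sphere, the $2$-arc of $L$ in each ball consists of one arc of $K_1$ and one arc of $K_2$. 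Tracking the level sets of $\tilde{F'}$ through the four saddles, starting from one arc at level $0$ and ending with one arc at level $1$, and using that $h|\tilde{F'}$ has only saddles, one finds that no configuration of four saddles on a genus two surface can connect things up without creating a trivial circle somewhere or running afoul of the transversality of $F'$ with $K_2$. More precisely, the obstruction is that $F'$ meets $K_2$ (which runs monotonically through $S \times I$), so some level circle of $S_r \cap F'$ must link $K_2$; such a circle, being nontrivial in $S_r - L$, forces an interaction with $K_2$ that the saddle count cannot sustain unless a meridional compressing disk appears.

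The cleanest route, and the one I would actually pursue, is the following: suppose for contradiction $F'$ is meridionally incompressible. Apply Lemma \ref{KT}. The surface $\tilde{F'}$ has four saddles, and by the analogue of Lemma \ref{lem2} (which holds by the same pushing argument, since the cancelling disk structure of $F'_0, F'_1$ is the same as in Lemma \ref{lem1}) the first saddle is of type 1 and the last of type 2. Then one mimics the case analysis of Section \ref{sec3}: the middle two saddles must be such that the genus two surface closes up, which forces a vertical essential annulus disjoint from $L$ in $S \times I$ — but in the $2$-bridge $3$-ball setting $S \times I$ admits no such essential annulus avoiding $L$, since $E(L)$ for a $2$-bridge link is small (contains no closed essential surface) and such an annulus would be compressible or $\bd$-parallel, contradicting either essentiality of $F'$ or minimality of complexity. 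This contradiction shows $F'$ is meridionally compressible. For the ``moreover'' clause, after one meridional compression we obtain (by the remark following the definition of meridional incompressibility) a new essential surface; its Euler characteristic goes up by one per meridional surgery (a meridional compression on an orientable surface removes a handle or splits off, and in our situation it raises $\chi$ by $2$ while $|\bd F'|$ changes appropriately so that $2-(\chi + |\bd|)$ drops), so after two meridional compressions we reach $\chi = 1$ with the right boundary, i.e.\ a disk, and by Lemma \ref{KT} applied to this disk the bound $|Y| = 2-(1+1) = 0$ holds, matching item (3) that $F_0, F_1$ are just cancelling disks; one checks the intermediate surface (genus one, after one compression) is still meridionally compressible by rerunning the same counting argument with two saddles.

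The main obstacle I expect is making rigorous the claim that meridional incompressibility together with the Lemma \ref{KT} normal form is genuinely impossible for a genus two surface in this $2$-bridge setting — i.e.\ converting the informal ``the saddle count does not close up'' into a clean contradiction. The subtlety is that $\tilde{F'}$ here also carries the arc of $K_2$ passing through it transversally, so the relevant level sets contain arcs coming from $K_2$-punctures as well, and one has to track two families of arcs plus circles simultaneously through four saddles. I would handle this by recording, just above the first saddle and just below the last saddle, the exact tuple (number of circles, number of $K_1$-arc endpoints, number of $K_2$-puncture arcs) and showing that matching these across the two middle saddles forces either a trivial circle (contradicting (2) of Lemma \ref{KT}) or a circle that bounds a once-punctured disk meeting $K_2$ once, which is precisely a meridional compressing disk — the desired contradiction. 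A secondary, more routine point is the Euler characteristic bookkeeping under meridional surgery, which I would state carefully once and then apply twice; the key identity is that each meridional compression increases $\chi(F')$ by $2$ and, since it caps off a circle that is a meridian of $K_2$, adds one meridional boundary circle in the appropriate count, so that the quantity $2 - (\chi + |\bd|)$ in Lemma \ref{KT} decreases by exactly one at each step, reaching $0$ after two steps.
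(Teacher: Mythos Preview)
Your main proposed route has a genuine gap: you plan to mimic the Section~\ref{sec3} case analysis and derive a contradiction from a ``vertical essential annulus disjoint from $L$'', but in the $2$-bridge setting $S$ is a $2$-sphere, not a torus. There are no essential simple closed curves on $S^2$, hence no slopes and no vertical essential annuli at all; the entire machinery of Section~\ref{sec3} (slopes, spanning annuli, the $(p,q)$ bookkeeping) simply does not exist here. So the contradiction you aim for cannot be reached along this path, and the four-saddle case analysis is both unnecessary and ill-adapted.

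The paper's argument is far more direct and uses only the \emph{first} saddle. After the first saddle the arc becomes an arc $\alpha_1$ and a circle $\gamma$ on a level sphere $S_r$. Since $S_r$ is a sphere, $\gamma$ bounds a disk on each side; the two $K_1$-punctures lie together (joined by $\alpha_1$), so the dichotomy is whether $\gamma$ separates the two $K_2$-punctures. If it does, one side is a disk meeting $L$ in a single $K_2$-point, giving a meridional compressing disk and contradicting meridional incompressibility. If it does not, the side away from the $K_2$-points is (after pushing off $K_1$ via the cancelling disk $F_0'$) an honest compressing disk for $F'$ in $E(L)$, contradicting essentiality. That is the whole argument; no middle-saddle matching is needed. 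Your backup plan (forcing a trivial circle or a once-punctured disk) is morally this idea, but you locate it at the wrong saddles and frame it as a case analysis obstacle rather than the one-line proof it is.

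For the ``moreover'' clause you are also missing a key step the paper supplies: one must show the meridional compressing curve $\gamma$ is \emph{nonseparating} in $F'$, for otherwise surgery produces two pieces rather than a single lower-genus surface. The paper does this by a linking-number argument: since $F'\subset E(K_2)$ is a Seifert surface for $K_1$, we have $lk(K_1,K_2)=0$; if $\gamma$ were separating, it would be homologous to $K_1$ on one piece, forcing $lk(\gamma,K_2)=0$, whereas $\gamma$ bounds a disk meeting $K_2$ once, so $lk(\gamma,K_2)=\pm 1$. Finally, your Euler-characteristic bookkeeping is off: a meridional surgery caps with two once-punctured disks (annuli), so $\chi$ is \emph{unchanged} while $|\bd|$ increases by $2$, and $2-(\chi+|\bd|)$ drops by $2$ (from $4$ to $2$ to $0$), not by $1$.
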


\begin{proof}
Suppose $F'$ is meridionally incompressible, Lemma \ref{KT} can be used. Let $\alpha_0$ the arc in $F' \cap S_0$. The first saddle must change $\alpha_0$ into an arc and a curve $\gamma$. Let $S_r$ be the level where the curve $\gamma$ lies, then $\gamma$ either separates or does not separate the points $S_r \cap K_2$, the first option is not possible by Lemma \ref{KT} $(2)$ since $F'$ is meriodionally incompressible, while the second option using the cancelling disk $F'_0$, $F'$ compresses into $E(L)$ along $\gamma$. Therefore, $F'$ is meridionally compressible.

Let $\gamma$ be a circle along which $F'$ meridionally compresses. 
The curve $\gamma$ is not separating, otherwise one component of $F'-n(\gamma)$ is a two-punctured torus $T$ with boundary $K_1$ and a copy of $\gamma$. Thus $\gamma$ and $K_1$ are homologous, implying that they have the same linking number with $K_2$, but $lk(K_1, K_2) \equiv$0 mod 2 and $lk(\gamma, K_2)= \pm 1$. 

Therefore,  after performing a meridian compression on $F'$ along $\gamma$, we obtain a 3-punctured torus $T$, with $\bd T= K_1 \cup m_1 \cup m_2$, where $m_1, m_2$ are meridians of $E(K_2)$.  Suppose $T$ is meriodionally incompressible, by Lemma \ref{KT}, $\vert Y(T) \vert=2$. Let $S_r$ be a regular level slightly above the first saddle. If $S\times [0,r] \cap T$ does not contain the boundaries $m_1, m_2$, using the curve $\beta$ generated by the first saddle we can argue as above that $T$ is either compressible or meriodionally compressible.

Suppose that $S\times [0,r] \cap T$ contains a boundary component, say $m_1$ and the first saddle occurs after $m_1$. Then the curve $\beta$ generated  by the first saddle, either separates or does not separate the points $S_r \cap K_2$, in either case there is a meridian compression for $T$.

We conclude that $T$ is meridionally compressible, moreover the boundary $\gamma$ of  the disk of meridian compression, is a non-separating curve in $T$, otherwise it would be a separating curve for $F'$.

Hence $T$ meriodionally compresses into a 5-punctured sphere $\Sigma$, which is essential and meridional incompressible in $E(K_2)$, the boundaries of $\Sigma$ consist of $K_1$ and four meridians of $E(K_2)$. Thus $\Sigma \subset S^3$ is a disk that intersects transversally $K_2$ in four points.
\end{proof}
\begin{figure}[!h]
\numberwithin{figure}{section}
\includegraphics[height=100mm]{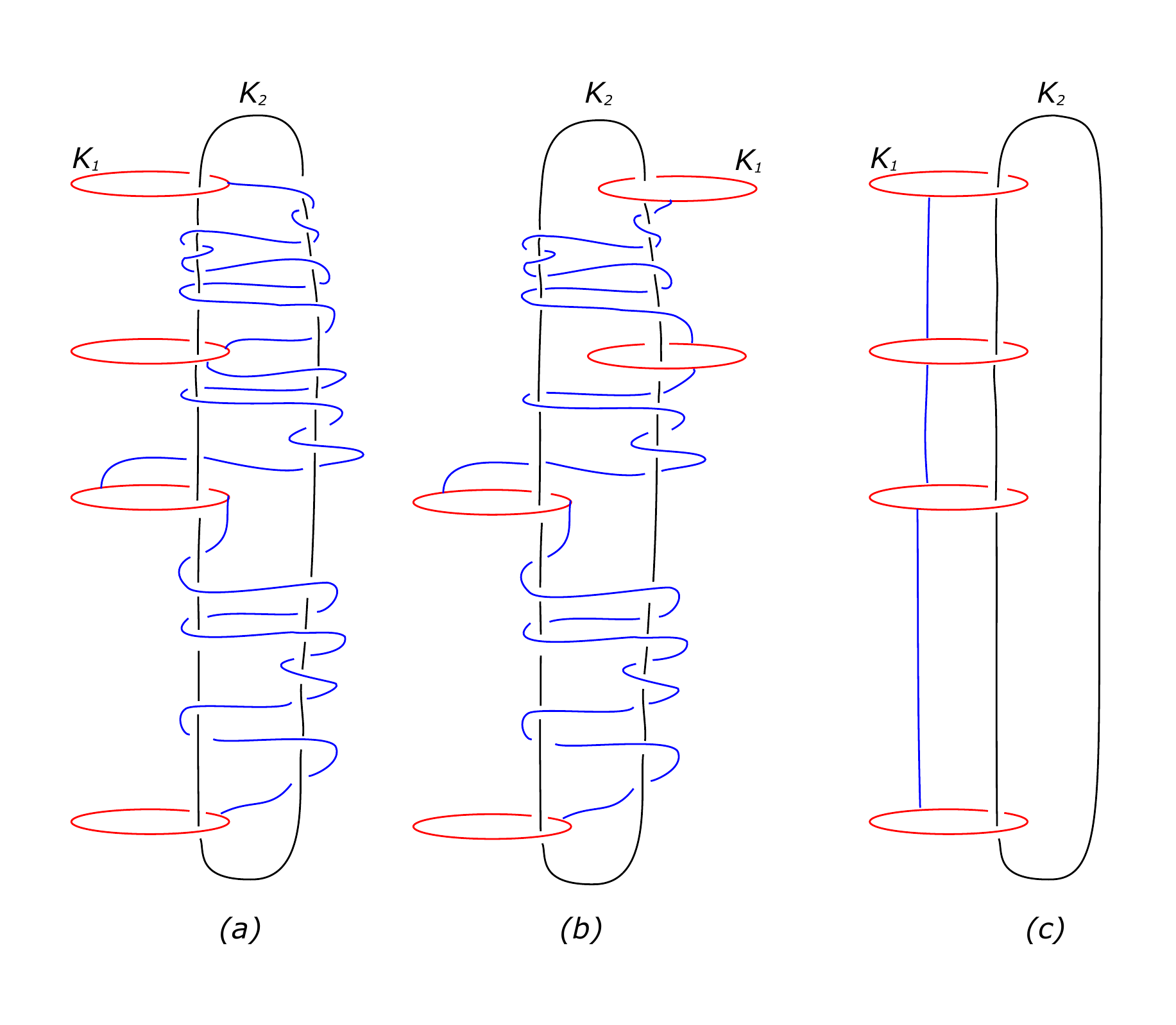}
\caption{The  arcs represent the cores of the bands along which the red disks are connected}
\label{2blink}
\end{figure}

\begin{figure}[!h]
\numberwithin{figure}{section}
\includegraphics[height=100mm]{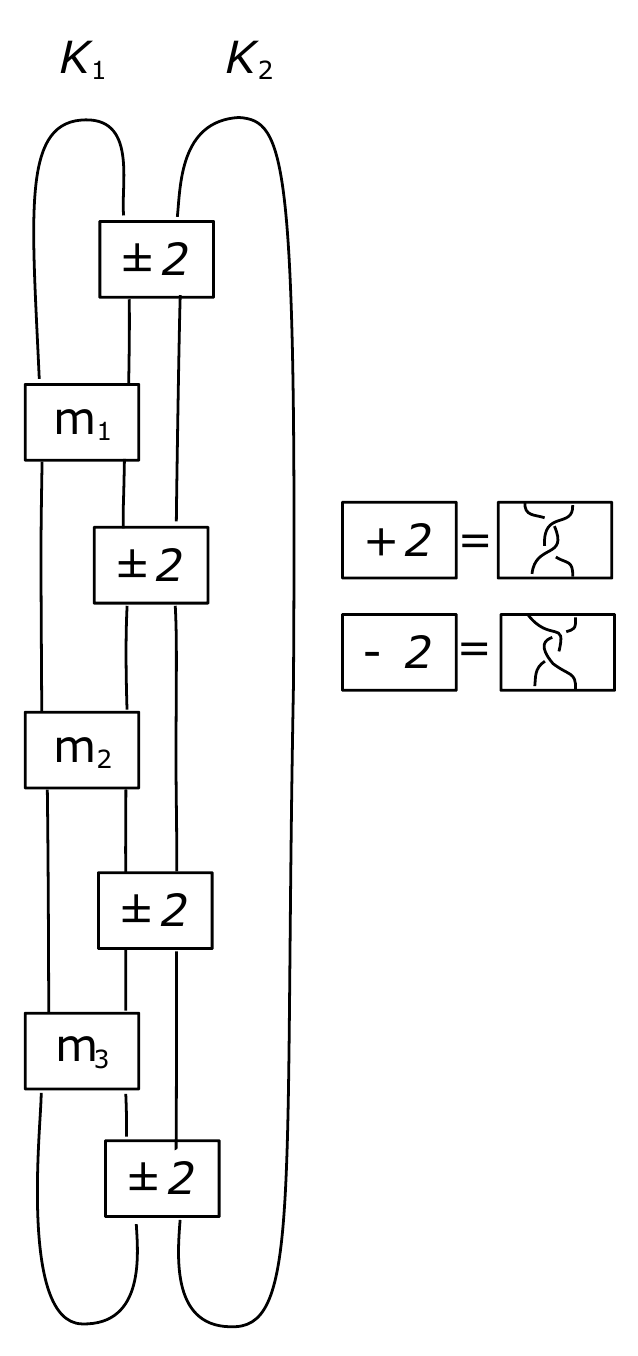}
\caption{}
\label{2blink1}
\end{figure}

\begin{proof}[Proof of Theorem~\ref{teoclasificacion}]
According to Lemma \ref{KT} we may assume that $\Sigma$ satisfies (M1)-(M4) and lies within the region $S\times I$ except for the cancelling disks $\Sigma_0$ and $\Sigma_1$ and $\vert Y(F)\vert=0$.
  
The disk $\Sigma$ can be constructed with four parallel disks transverse to $K_2$ joined by three descending bands disjoint from $K_2$, Figure \ref{2blink} (a), (b) show some possibilities. It is not hard to see that $\Sigma$ can be isotope to be as in Figure \ref{2blink} (c).

Thus the link $L= K_1\cup K_2$ is of the form shown in Figure \ref{2blink1}, where each $m_i$ is an integer and a box with a $m_i$ represents  $m_i$ crossings.

By an isotopy of $L$, we can assume that each box marked with $\pm 2$, is indeed $+2$. Using the fact that $lk(K_1, K_2)=0$ we find that  $m_1$ and $m_3$ have the same parity and $m_2$ has the opposite parity. Thus the link $L$ has associated the rational $\frac{\alpha}{\beta}$ given by the continued fractions $\frac{\alpha}{\beta}=[2, 2u, 2, 2v+1, 2, 2w,2]$ or $\frac{\alpha}{\beta}= [2, 2u+1, 2, 2v, 2, 2w+1,2]$, which is the result required by the Theorem.
\end{proof}

\end{document}